\documentclass{article}
\usepackage{lmodern}
\usepackage{bm}
\usepackage{multicol}
\usepackage{multirow}
\usepackage{scrextend}
\usepackage{amsmath}
\usepackage[titletoc,title]{appendix}
\usepackage{amsbsy}
\usepackage{amssymb}
\usepackage{amsthm}
\usepackage{amsfonts}
\usepackage{xcolor}
\usepackage{anyfontsize}
\usepackage{epsfig}
\usepackage{soul}
\usepackage{epstopdf}
\usepackage{wrapfig}
\usepackage[normalem]{ulem}
\usepackage{cancel}
\usepackage{fullpage}
\usepackage{float}
\usepackage{breqn}
\usepackage{graphicx}
\usepackage{caption}
\usepackage{subcaption}
\usepackage{calrsfs}
\usepackage{verbatim}
\usepackage{lineno}
\usepackage[hidelinks]{hyperref}
\usepackage{cleveref}
\usepackage{url}
\usepackage{enumitem}

\pdfoutput=1

\DeclareMathAlphabet{\mathpzc}{OT1}{pzc}{m}{it}

\newtheorem{theorem}{Theorem}[section]
\newtheorem{algorithm}{Algorithm}[section]
\newtheorem{corollary}{Corollary}[section]
\newtheorem{lemma}{Lemma}[section]
\newtheorem{proposition}{Proposition}[section]

\newtheorem{assumption}{Assumption}
\newtheorem{definition}{Definition}[section]
\newtheorem{remark}{Remark}[section]

\numberwithin{equation}{section}

\newcommand{\bu}{\mathbf{u}}
\newcommand{\bv}{\mathbf{v}}
\newcommand{\bU}{\mathbf{U}}

\newcommand{\bX}{\mathbf{X}}
\newcommand{\bw}{\mathbf{w}}

\newcommand{\be}{\mathbf{e}}
\newcommand{\bff}{\mathbf{f}}

\newcommand{\bphi}{{\boldsymbol \phi}}

\newcommand{\bfX}{\mathbf{X}}

\newcommand{\bfu}{\mathbf{u}}

\newcommand{\bfeta}{{\boldsymbol \eta}}
\newcommand{\btheta}{{\boldsymbol \theta}}

\date{}
\usepackage[section]{placeins}
\makeatletter
\AtBeginDocument{%
	\expandafter\renewcommand\expandafter\subsection\expandafter{%
		\expandafter\@fb@secFB\subsection
	}%
}

\usepackage{authblk}

\title{Accelerating the Improved Arrow--Hurwicz Iteration via the Anderson Algorithm for Steady-State Navier--Stokes Equations}

\author[, $\chi$, $\mu$]{Sinan Ergen\thanks{Corresponding author: sinan.ergen@balikesir.edu.tr}}
\author[$\chi$, $\alpha$]{Mustafa Ağgül}
\author[$\chi$]{Mustafa Türkyılmazoğlu}

\affil[$\chi$]{Department of Mathematics, Hacettepe University, 06800, Ankara, Türkiye}
\affil[$\mu$]{Department of Mathematics, Balikesir University, 10145, Balikesir, Türkiye}
\affil[$\alpha$]{Department of Mathematics, Southern Methodist University, 75205, Dallas, TX, USA}

\begin{document}

\maketitle

\begin{abstract}
We apply Anderson acceleration to the improved Arrow--Hurwicz (IAH) method
for the finite element solution of the steady-state incompressible
Navier--Stokes equations. The IAH scheme avoids saddle-point solves by
decoupling the velocity and pressure updates, but can require prohibitively
many iterations, particularly at high Reynolds numbers. To place the
acceleration on a rigorous footing, we reformulate the IAH iteration as a
nonlinear fixed-point operator $G$ for the grad-div augmented discrete
formulation induced by the scheme and establish its well-definedness,
Lipschitz continuity, and Fr\'{e}chet differentiability, thereby verifying
the required smoothness conditions locally near the fixed point.
Numerical experiments on problems with known analytical solutions,
lid-driven cavity flow up to $Re = 15{,}000$, and channel flow over a full 
step demonstrate that the resulting Anderson-accelerated improved
Arrow--Hurwicz algorithm substantially reduces iteration counts and CPU
time while retaining the reported manufactured-solution convergence rates
and centerline-velocity agreement.
\end{abstract}

\section{Introduction}

The aim of this study is to apply the Anderson algorithm to the improved Arrow--Hurwicz scheme proposed in~\cite{IAH_for_NSE} for the steady-state Navier--Stokes equations. We consider the steady incompressible Navier--Stokes equations given by
\begin{equation}\label{nse}
    \begin{aligned}
        \bu \cdot \nabla \bu - \nu \Delta \bu + \nabla p &= \bff, \text{ in } \Omega, \\
        \nabla \cdot \bu &= 0, \text{ in } \Omega,\\
        \bu &= 0, \text{ on } \partial\Omega.
    \end{aligned}
\end{equation}
Here, $\bu$ denotes the velocity vector, $p$ the pressure, $\nu > 0$ the kinematic viscosity coefficient, and $\bff$ the external force. The domain is assumed to satisfy $\Omega \subset \mathbb{R}^d$ with $d = 2,3$ and to have a Lipschitz boundary $\partial\Omega$. A well-established result states that the system~\eqref{nse} always possesses at least one solution, while the uniqueness of the solution is guaranteed under a small data condition~\cite{Temam79}

\begin{equation}\label{small_data_condition}
    \Lambda = \nu^{-2} \mathcal{M} \|\bff\|_{-1} < 1
\end{equation}

where
\begin{equation} \label{M}
    \mathcal{M} = \sup_{\bfu, \bv, \bw \in [H_0^1(\Omega)]^d} \frac{|(\bfu \cdot \nabla \bv, \bw)|}{\| \nabla \bfu \| \| \nabla \bv \| \| \nabla \bw \|}.
\end{equation}
Steady-state incompressible Navier--Stokes equations serve as a fundamental framework for modeling numerous physical problems in fluid mechanics; for instance, certain hemodynamic flows occur in steady regimes~\cite{viguerie2019deconvolution}. Consequently, the solution of these equations occupies a significant place in the mathematical mo\-deling literature~\cite{MarcusTuckerman1987a,MarcusTuckerman1987b}. Numerical solution of these equations using the finite element method presents significant computational difficulties due to the nonlinear nature created by the convection term and the saddle-point relationship between velocity and pressure variables~\cite{benzi2005numerical}. Although techniques such as Newton or Picard iterations are frequently employed in the literature to address these difficulties and resolve the nonlinear structure~\cite{he2009convergence}, these methods necessitate the solution of large, coupled linear systems at each iteration step. This requirement significantly increases computational costs and memory demands, particularly when dealing with fine meshes or complex three-dimensional problems~\cite{cao2015relaxed}.

To decrease these computational costs, various decoupling methods have been developed to separate the velocity and pressure computations, thereby generating smaller sub-problems~\cite{GUERMOND20066011}. A particularly notable approach among these is the Arrow--Hurwicz (AH) method. While originally developed by Arrow and Hurwicz for constrained optimization~\cite{arrow1958gradient}, it was later tailored to the Navier--Stokes equations by Temam~\cite{Temam79}. The traditional AH method employs a sequential iterative process to update velocity and pressure; however, this approach often suffers from slow convergence, particularly at high Reynolds numbers or when the iteration parameters ($\alpha$ and $\rho$) are not chosen carefully~\cite{CHEN2017100}. Consequently, reaching a solution can require a prohibitive number of iterations. To counter these efficiency issues and improve stability, researchers have introduced various enhancements, such as grad-div stabilization~\cite{olshanskii2004grad} and the application of preconditioners~\cite{PAN2006762newpre}.

Recently, a new scheme termed the ``improved Arrow--Hurwicz'' method has been proposed for the steady-state Navier--Stokes equations~\cite{IAH_for_NSE} and subsequently extended to natural convection equations~\cite{AH_NCE}. Compared to the classical AH scheme, the IAH method produces more stable and efficient results. Nevertheless, the iteration counts remain open to improvement, particularly at high Reynolds numbers.

On the other hand, the Anderson acceleration (AA) technique has garnered significant attention in recent years for its ability to speed up fixed-point iterations~\cite{anderson1965iterative}. Originally proposed in 1965, this method updates the next step by utilizing ``history'' information from previous iterations within a least-squares optimization framework. Anderson acceleration has been shown to significantly accelerate convergence in Picard iterations for Navier--Stokes equations~\cite{AA_for_picard,REBHOLZ2021114178}, fixed-point iterations~\cite{AA_for_fixed}, grad-div stabilized methods~\cite{GEREDELI2023114920}, and other nonlinear systems~\cite{LOTT201292}. In some instances, it can even induce convergence in challenging problems where classical Newton or Picard methods fail or diverge~\cite{convergence_analysis_aa}.

In this study, Anderson acceleration is applied to the IAH method proposed in~\cite{IAH_for_NSE} for the solution of steady incompressible Navier--Stokes equations. Our primary goal is to minimize the number of iterations and computational time via Anderson acceleration, while preserving the computational efficiency derived from the decoupling nature of the IAH method without compromising solution accuracy. To establish the theoretical foundation for the applicability of Anderson acceleration to this system, the IAH algorithm from~\cite{IAH_for_NSE} is explicitly reformulated as a nonlinear fixed-point operator on $\mathcal{H}_h=\bX_h\times Q_h$ of the form $x=G(x)$ for the grad-div augmented fixed-point formulation induced by the scheme. The well-posedness, Lipschitz continuity, and Fréchet differentiability of the defined operator $G$ are then rigorously established.

The rest of the article is structured as follows: Section 2 presents the basic representations and weak formulation of the Navier--Stokes equations and the improved Arrow--Hurwicz scheme. Section 3 introduces the Anderson algorithm; subsequently, the scheme is expressed as the fixed-point operator $(G)$ and its differen\-tiability properties are analyzed. Section 4 presents numerical experiments demonstrating the performance of the proposed method.

\section{Preliminaries}

Let $\Omega \subset \mathbb{R}^d$ $(d = 2,3)$ be a domain with a Lipschitz boundary $\partial\Omega$. The zero mean subspace of $L^2(\Omega)$ is denoted by $L^2_0(\Omega)$. We define the natural NSE velocity and pressure spaces as $\bX := [H_0^1(\Omega)]^d$ and $Q := L_0^2(\Omega)$, respectively. The dual space of $\bX$ will be denoted by $\bX'$, and its dual norm by $\|\cdot\|_{-1}$.

The skew-symmetric trilinear form is defined as follows

\begin{equation}
    b^*(\bfu, \bv, \bw) = \frac{1}{2}((\bfu \cdot \nabla)\bv, \bw) - \frac{1}{2}((\bfu \cdot \nabla)\bw, \bv), \quad \forall \bfu, \bv, \bw \in \bfX.
\end{equation}

For any given velocity fields $\bfu, \bv, \bw \in \bfX$, the trilinear form satisfies the well-known continuity bound~\cite{girault1986finite, Temam79}
\begin{equation}
    |b^*(\bfu, \bv, \bw)| \le \mathcal{M} \|\nabla \bfu\| \|\nabla \bv\| \|\nabla \bw\|, \label{eq:b_bound}
\end{equation}
where $\mathcal{M}$ is defined in~\eqref{M}. Based on this, the standard continuous weak formulation for the system~\eqref{nse} is defined as seeking a velocity-pressure pair $(\bfu, p) \in \bfX \times Q$ that satisfies the following equations for any test functions $(\bv, q) \in \bfX \times Q$:
\begin{align}
    \nu(\nabla \bfu, \nabla \bv) + b^*(\bfu, \bfu, \bv) - (p, \nabla \cdot \bv) & = (\bff, \bv), \label{eq:weak_form_1} \\
    (\nabla \cdot \bfu, q) & = 0. \label{eq:weak_form_2}
\end{align}

To establish the discrete setting, we introduce a regular and conforming triangulation $\mathcal{T}_h$ over the domain $\Omega$, where $h$ represents the maximum element diameter ($h = \max_{K \in \mathcal{T}_h} h_K$). The corresponding Galerkin finite element approximation requires finding $(\bfu_h, p_h) \in \bX_h \times Q_h$ such that for all discrete test functions $(\bv_h, q_h) \in \bX_h \times Q_h$,
\begin{align}
    \nu(\nabla \bfu_h, \nabla \bv_h) + b^*(\bfu_h, \bfu_h, \bv_h) - (p_h, \nabla \cdot \bv_h) & = (\bff, \bv_h), \label{eq:discrete_1} \\
    (\nabla \cdot \bfu_h, q_h) & = 0. \label{eq:discrete_2}
\end{align}
In this coupled formulation, the finite-dimensional subspaces $\bX_h \subset \bfX$ and $Q_h \subset Q$ must be chosen to satisfy the discrete Ladyzhenskaya-Babuška-Brezzi (LBB) compatibility condition [see, e.g.,~\cite{girault1986finite},~\cite{john2016finite}]. Specifically, there must exist a mesh-independent constant $\beta > 0$ ensuring
\begin{equation}
    \inf_{q_h \in Q_h} \sup_{\bv_h \in \bX_h} \frac{(q_h, \nabla \cdot \bv_h)}{\|\nabla \bv_h\| \|q_h\|} \ge \beta > 0. \label{eq:inf_sup}
\end{equation}

The fulfillment of the discrete inf-sup condition~\eqref{eq:inf_sup}, together with the small data assumption~\eqref{small_data_condition}, mathematically guarantees that the discrete nonlinear system~\eqref{eq:discrete_1}--\eqref{eq:discrete_2} admits a unique solution. Further\-more, the discrete velocity field $\bfu_h \in \bX_h$ satisfies the following standard a priori stability bound (see~\cite{john2016finite, Temam79, layton2008introduction}  for detailed derivations)
\begin{equation}
    \|\nabla \bfu_h\| \le \nu^{-1} \|\bff\|_{-1}. \label{eq:stability}
\end{equation}

Since the improved Arrow--Hurwicz iteration updates both velocity and pressure, we denote the product space and its norm by
\[
\mathcal{H}_h := \bX_h \times Q_h,
\qquad
\|(\bv_h,q_h)\|_{\mathcal{H}_h}
:=
\left(\|\nabla \bv_h\|^2+\alpha\|q_h\|^2\right)^{1/2}.
\]
For notational convenience, we write each velocity-pressure pair as
\[
x_k := (\bfu_h^k,p_h^k)\in \mathcal{H}_h.
\]

\section{Improved Arrow--Hurwicz Scheme and Anderson Acceleration}

In this section, an Anderson acceleration algorithm (AA-IAH) is introduced for the improved Arrow--Hurwicz scheme of the steady-state Navier--Stokes equations~\cite{IAH_for_NSE}. The fundamental theoretical analyses of the IAH scheme under appropriate parameter choices and the small data condition, including uniform boundedness of iterates, convergence properties, and error estimates, have been rigorously established in~\cite{IAH_for_NSE}. To avoid redundancy, we present the method in the form used in this work and refer the interested reader to~\cite{IAH_for_NSE} for detailed theoretical results.

Unlike~\cite{IAH_for_NSE}, we initialize the solution $(\bfu_h^0, p_h^0) \in \bX_h \times Q_h$ directly as $(\bfu_h^0, p_h^0) = (\mathbf{0}, 0)$, rather than utilizing a solution obtained from a preliminary saddle-point problem. This approach is motivated by the objective of reducing the overall computational overhead, particularly by avoiding an expensive coupled solve at the initial step. Such a zero-initialization strategy is consistent with modern Anderson acceleration frameworks for Navier--Stokes equations, which demonstrate that the acceleration mechanism can effectively compensate for the absence of sophisticated initial guesses or continuation methods~\cite{AA_for_picard,contractive_noncontractive}.
We now state the IAH scheme as defined in~\cite{IAH_for_NSE}.

\begin{algorithm}[Improved Arrow--Hurwicz Method] \label{algo_IAH}

 Let $\rho$ and $\alpha$ be positive parameters, and initialize $(\bfu_h^0, p_h^0) = (\mathbf{0}, 0)$. For $n \ge 0$, find $(\bfu_h^{n+1}, p_h^{n+1}) \in \bX_h \times Q_h$ such that, for all $(\bv_h, q_h) \in \bX_h \times Q_h$,
\begin{align}
\frac{1}{\rho}(\nabla(\bfu_h^{n+1} - \bfu_h^n), \nabla \bv_h)
+ \nu(\nabla \bfu_h^{n+1}, \nabla \bv_h)
+ b^*(\bfu_h^n, \bfu_h^{n+1}, \bv_h) \nonumber \\
- (p_h^n, \nabla \cdot \bv_h)
+ \frac{\rho}{\alpha}(\nabla \cdot \bfu_h^{n+1}, \nabla \cdot \bv_h)
& = (\bff, \bv_h), \label{eq:weak_mom} \\[6pt]
\alpha(p_h^{n+1} - p_h^n, q_h) + \rho(\nabla \cdot \bfu_h^{n+1}, q_h) & = 0. \label{eq:weak_cont}
\end{align}
\end{algorithm}

We now introduce the Anderson acceleration (AA) procedure and its convergence properties.
Let $G: \mathcal{H}_h \to \mathcal{H}_h$ denote the fixed-point operator associated with Algorithm~\ref{algo_IAH}.
Then, the fixed-point problem is
\[
G(x)=x.
\]
We define the nonlinear residual, also referred to as the update step, by
\[
r_j := G(x_{j-1})-x_{j-1}.
\]
The AA algorithm with depth $m$, which reduces to the standard fixed-point iteration when $m=0$, applied to the fixed-point problem $G(x)=x$ is presented next.

\begin{algorithm}[Anderson Acceleration]
\label{algo:anderson}

\noindent The Anderson acceleration algorithm with depth $m$ is as follows:
\begin{description}
\item[{Step $0$\quad\quad:}]
Choose $x_0 \in \mathcal{H}_h$.
\item[{Step $1$\quad\quad:}]
Find $\widetilde{x}_1 \in \mathcal{H}_h$ such that $\widetilde{x}_1 = G(x_0)$. Set $x_1 = \widetilde{x}_1$.
\item[{Step $k+1$\,\,:}]
For $k = 1, 2, 3, \dots$, set $m_k = \min\{k, m\}$.
\begin{description}
\item[{\textbf{(a)}}] Find $\widetilde{x}_{k+1} = G(x_k)$.
\item[{\textbf{(b)}}] Solve the minimization problem for
$\{\alpha_j^{k+1}\}_{j=k-m_k}^k$:
\begin{equation*}
\min_{\alpha^{k+1}}
\left\|
\sum_{j = k-m_k}^k
\alpha_j^{k+1}(\widetilde{x}_{j+1}-x_j)
\right\|_{\mathcal{H}_h}
\quad
\text{subject to}
\quad
\sum_{j = k-m_k}^k \alpha_j^{k+1} = 1.
\end{equation*}

\item[{\textbf{(c)}}] Set
\[
x_{k+1}
=
\sum_{j = k-m_k}^k
\alpha_j^{k+1}\widetilde{x}_{j+1}.
\]

\end{description}
\end{description}
\end{algorithm}

\begin{remark}
The Anderson acceleration algorithm is stated in the $\mathcal H_h$ norm because
this is the natural product norm for the theoretical analysis of the IAH
fixed-point map. In the numerical implementation, however, the least-squares
problem is solved with the standard discrete Euclidean norm of the coefficient
vectors. This is the classical choice in many computational implementations of
Anderson acceleration: it avoids assembling and applying additional weighted
Gram matrices at every nonlinear iteration and keeps the acceleration step small
relative to the finite element solves. Since the underlying spaces are finite
dimensional on each fixed mesh, the discrete Euclidean norm and the finite
element product norms are equivalent, although the equivalence constants
generally depend on the basis and mesh and are not used in the theoretical
estimates below. Thus
the $\mathcal H_h$ norm should be understood as the analytical norm used to
derive local smoothness and residual bounds, while the reported computations
use the classical algebraic norm for the Anderson least-squares problem and for
the recorded relative iterate changes.
\end{remark}

To understand how Anderson acceleration improves convergence, for $\alpha^{k+1}$ from Algorithm~\ref{algo:anderson} we define the optimization gain factor $\theta_k$ by

\begin{equation}
    \min_{\alpha^{k+1}} \left\| \sum_{j = k-m_k}^k \alpha_j^{k+1} (\widetilde{x}_{j+1} - x_j) \right\|_{\mathcal{H}_h} = \theta_k \left\| \widetilde{x}_{k+1} - x_k \right\|_{\mathcal{H}_h}.
\end{equation}

\noindent The following assumptions, which are verified later in this section, provide the fundamental conditions used in the theoretical analysis of Anderson acceleration applied to the fixed-point operator $G$, similar to the framework in~\cite{contractive_noncontractive}.

\begin{assumption}\label{assump:1}

Assume that $G \in C^1(\mathcal{H}_h)$ has a fixed point $x_h^*$ in $\mathcal{H}_h$, and that there is a neighborhood $\mathcal U$ of $x_h^*$ on which positive constants $C_0$ and $C_1$ satisfy the following conditions:
\begin{enumerate}
    \item For all $x\in \mathcal U$ and all $y \in \mathcal{H}_h$, $\|G'(x)y\|_{\mathcal{H}_h} \leq C_0\|y\|_{\mathcal{H}_h}$,
    \item For all $x,\widetilde{x}\in \mathcal U$ and all $y \in \mathcal{H}_h$, $\|G'(x)y - G'(\widetilde{x})y\|_{\mathcal{H}_h} \leq C_1\|x - \widetilde{x}\|_{\mathcal{H}_h} \|y\|_{\mathcal{H}_h}$.
\end{enumerate}

\end{assumption}

\begin{assumption}\label{assump:2}

Assume there exists a constant $\sigma > 0$ for which the differences between consecutive residuals and iterates satisfy
\begin{equation}
    \|r_{k+1} - r_k\|_{\mathcal{H}_h} \geq \sigma \|x_k - x_{k-1}\|_{\mathcal{H}_h}, \qquad k \geq 1.
    \label{eq:assump_sigma}
\end{equation}

\end{assumption}

In the remainder of this section, it will be shown that the fixed-point operator associated with the IAH method satisfies the conditions required by Assumption~\ref{assump:1}. Assumption~\ref{assump:2}, on the other hand, will be verified locally in a neighborhood of the solution. Once both assumptions are established, Theorem~\ref{theorem:pollock} from~\cite{contractive_noncontractive} allows the residual norm $\|r_{k+1}\|_{\mathcal{H}_h}$ to be bounded from above in terms of the current residual $\|r_k\|_{\mathcal{H}_h}$ and the preceding residuals.

\begin{theorem}[Pollock and Rebholz 2021] \label{theorem:pollock}
Suppose Assumptions \ref{assump:1} and \ref{assump:2} hold, and suppose the direction sines between each column $i$ of the matrix $\mathcal{F}_j$ defined by
\[
\mathcal{F}_j := ((r_j - r_{j-1}), \, (r_{j-1} - r_{j-2}) \, \dots \, (r_{j-m_j+1} - r_{j-m_j}))
\]

and the subspace spanned by the preceding columns satisfy $|\sin(\mathcal{F}_{j,i}, \mathrm{span } \{\mathcal{F}_{j,1}, \dots, \mathcal{F}_{j,i-1}\})| \ge c_s > 0$, for $j = k-m_k, \dots, k-1$. Then the residual $r_{k+1} = G(x_k) - x_k$ from Algorithm~\ref{algo:anderson} (depth $m$) satisfies the following bound:
\begin{multline}
    \|r_{k+1}\|_{\mathcal{H}_h} \le \|r_k\|_{\mathcal{H}_h} \Bigg( \theta_k C_0 + \frac{C C_1 \sqrt{1 - \theta_k^2}}{2} \bigg( \|r_k\|_{\mathcal{H}_h} h(\theta_k) \\
    + 2 \sum_{n = k-m_k+1}^{k-1} (k-n) \|r_n\|_{\mathcal{H}_h} h(\theta_n) + m_k \|r_{k-m_k}\|_{\mathcal{H}_h} h(\theta_{k-m_k}) \bigg) \Bigg),
    \label{eq:theorem_bound}
\end{multline}
where each $h(\theta_j) \le C \sqrt{1 - \theta_j^2} +  \theta_j$, and $C$ depends on $c_s$ and the implied upper bound on the direction cosines.
\end{theorem}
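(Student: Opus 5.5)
This is the abstract Anderson-acceleration residual bound of Pollock and Rebholz, transcribed to the Hilbert space $\mathcal{H}_h$. I would reproduce their argument, which uses only the inner-product structure of $\mathcal{H}_h$ and Assumptions~\ref{assump:1}--\ref{assump:2}; nothing specific to the IAH map $G$ is needed.

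\textbf{Step 1: rewrite the Anderson update in difference form.} Set $e_j := x_j - x_{j-1}$ and let $E_k$ be the matrix with columns $e_k,\dots,e_{k-m_k+1}$. The columns of $\mathcal{F}_k$ are $r_k - r_{k-1},\dots$. Eliminating the affine constraint, the optimization problem becomes: find $\gamma^{k+1}\in\mathbb{R}^{m_k}$ minimizing $\|r_k - \mathcal{F}_k\gamma\|_{\mathcal{H}_h}$. The solution $\gamma^{k+1}$ gives the optimized residual
\[
r_k^\alpha := r_k - \mathcal{F}_k\gamma^{k+1}, \qquad \|r_k^\alpha\|_{\mathcal{H}_h} = \theta_k\|r_k\|_{\mathcal{H}_h},
\]
together with the averaged iterate $x_k^\alpha = x_k - E_k\gamma^{k+1}$. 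The update is then $x_{k+1} = x_k^\alpha + r_k^\alpha$. Because $r_k^\alpha$ is orthogonal to the range of $\mathcal{F}_k$, the Pythagorean identity gives $\|\mathcal{F}_k\gamma^{k+1}\|_{\mathcal{H}_h} = \sqrt{1-\theta_k^2}\,\|r_k\|_{\mathcal{H}_h}$. This is the source of every $\sqrt{1-\theta^2}$ factor in the bound.

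\textbf{Step 2: expand the new residual with the mean value theorem.} Write
\[
r_{k+1} = G(x_{k+1}) - G(x_k^\alpha) + \Big(G(x_k^\alpha) - \textstyle\sum_j \alpha_j G(x_j)\Big) + \Big(\textstyle\sum_j \alpha_j G(x_j) - x_{k+1}\Big).
\]
The last bracket vanishes by step (c). For the first term, integrate $G'$ along the segment from $x_k^\alpha$ to $x_{k+1}$; Assumption~\ref{assump:1}(1) bounds it by $C_0\|r_k^\alpha\| = C_0\theta_k\|r_k\|$. The middle term is an affine combination of values of $G$ and vanishes for affine $G$. To control it, express each $x_j$ as $x_k^\alpha$ plus combinations of the $e_n$, and write the differences $G(x_j)-G(x_{j-1})$ via integral averages of $G'$. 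Assumption~\ref{assump:1}(2) then bounds this term by $\tfrac{C_1}{2}$ times sums of products $\|e_n\|\,\|E_k\gamma^{k+1}\|$. The index bookkeeping here produces the weights $(k-n)$ and $m_k$.

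\textbf{Step 3: convert iterate differences into residuals.}
\begin{itemize}
\item Assumption~\ref{assump:2} gives $\|E_k\gamma\| \le \sigma^{-1}\|\mathcal{F}_k\gamma\|$ columnwise. The direction-sine condition (a QR argument: coefficient sizes are controlled by $1/c_s$) transfers this from individual columns to the combination $\gamma$. Hence $\|E_k\gamma^{k+1}\| \le C\sqrt{1-\theta_k^2}\,\|r_k\|$.
\item Each step satisfies $e_n = x_n - x_{n-1} = r_{n-1}^\alpha - E_{n-1}\gamma^n$. Therefore $\|e_n\| \le \|r_{n-1}\|\bigl(\theta_{n-1} + C\sqrt{1-\theta_{n-1}^2}\bigr) = \|r_{n-1}\|\,h(\theta_{n-1})$.
\end{itemize}
Inserting both estimates into Step 2 and collecting terms yields \eqref{eq:theorem_bound}.

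\textbf{Main obstacle.} I expect the hardest part to be the middle term of Step 2, namely organizing the telescoping sums so that the coefficients $2(k-n)$ and $m_k$ come out exactly. Alongside it is the QR-based constant $C$ from the direction sines. I would also need to ensure all iterates remain in $\mathcal U$ so that Assumption~\ref{assump:1} applies.
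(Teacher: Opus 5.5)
The paper does not prove this theorem. It imports it from Pollock and Rebholz (with damping $\beta=1$), and your outline is their argument: the $\gamma$-form of the least-squares problem, Pythagoras for the $\sqrt{1-\theta_k^2}$ factors, an integral mean-value expansion that splits off $C_0\theta_k\|r_k\|$, Assumption~\ref{assump:2} combined with a QR/direction-sine bound, and $\|e_n\|\le\|r_n\|h(\theta_n)$ (all norms in $\mathcal{H}_h$). One step, however, fails as written: the second-order term cannot be bounded by products $\|e_n\|\,\|E_k\gamma\|$.

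First straighten the indices. $r_k-\mathcal{F}_k\gamma$ combines the residuals at $x_{k-1},\dots,x_{k-m_k-1}$, so it pairs with $x_{k-1}^\alpha=x_{k-1}-E_{k-1}\gamma$, where $E_{k-1}=(e_{k-1},\dots,e_{k-m_k})$. This gives $x_k=x_{k-1}^\alpha+r_k^\alpha$, and the quantity to expand is $r_{k+1}=G(x_k)-x_k$; your $G(x_{k+1})-x_{k+1}$ is $r_{k+2}$. Then, exactly,
\[
r_{k+1}=\bigl[G(x_k)-G(x_{k-1}^\alpha)\bigr]+\sum_{i=1}^{m_k}\gamma_i\int_0^1\bigl[G'(x_{k-i-1}+te_{k-i})-G'(x_{k-1}-tE_{k-1}\gamma)\bigr]e_{k-i}\,dt .
\]
The first bracket is at most $C_0\theta_k\|r_k\|$. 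For the sum, Assumption~\ref{assump:1}(2) yields terms $C_1|\gamma_i|\,\|e_{k-i}\|\bigl(\tfrac12\|e_{k-i}\|+\sum_{j=k-i+1}^{k-1}\|e_j\|+\tfrac12\|E_{k-1}\gamma\|\bigr)$. The small factor here is $|\gamma_i|\,\|e_{k-i}\|$ for each $i$ separately, and the expansion does not let you factor out $\|E_{k-1}\gamma\|$ instead. The triangle inequality only gives $\|E_{k-1}\gamma\|\le\sum_i|\gamma_i|\,\|e_{k-i}\|$, and the columns of $E_{k-1}$ may nearly cancel while the $G'$-differences multiplying them do not.

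The repair is already in your Step 3 if you stop one line earlier. Assumption~\ref{assump:2} applied columnwise, together with the QR argument on $\mathcal{F}_k$, gives
$|\gamma_i|\,\|e_{k-i}\|\le\sigma^{-1}|\gamma_i|\,\|\mathcal{F}_{k,i}\|\le C\sigma^{-1}\|\mathcal{F}_k\gamma\|=C\sigma^{-1}\sqrt{1-\theta_k^2}\,\|r_k\|$.
That is the bound to carry, and it also gives $\|E_{k-1}\gamma\|\le\|r_k\|h(\theta_k)$.

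Two smaller points. First, with this anchoring the bookkeeping gives weight $m_k$ on $\|r_k\|h(\theta_k)$, weight $2(m_k-k+n)+1$ on $\|r_n\|h(\theta_n)$, and weight $1$ on $\|r_{k-m_k}\|h(\theta_{k-m_k})$. These are not literally the displayed $1$, $2(k-n)$, $m_k$, but they are at most $m_k$ times the displayed weights. So the stated form follows after enlarging $C$, which already depends on $m$, $c_s$ and $\sigma$ through the QR step, and you need not chase the exact coefficients. Second, Assumption~\ref{assump:1} holds only on $\mathcal{U}$ here, so it is not enough that the iterates stay in $\mathcal{U}$. The averaged points $x_{k-1}^\alpha$ and every segment used in the integrals must also lie in $\mathcal{U}$. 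Since the $\alpha_j$ may be negative, $x_{k-1}^\alpha$ need not lie in the convex hull of the iterates.
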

\begin{remark}
Theorem~\ref{theorem:pollock} provides an upper bound that describes the effect of Anderson acceleration on the residual at the next step. In this bound, the optimization gain factor $\theta_k$ directly scales the first-order term, while the higher-order terms appear together with the factor $\sqrt{1-\theta_k^2}$ and the preceding residual norms. Thus, the theorem illustrates the balance between the gain obtained at the Anderson step and the contribution of the higher-order terms.
Furthermore, the constant $C$ appearing in the theorem depends on constants related to the geometric independence among the columns of the optimization matrix~\cite{contractive_noncontractive}. Therefore, preserving the linear independence of the columns of the optimization matrix is essential for the theoretical bound to remain meaningful.
\end{remark}

In what follows, the solution operator $G$ associated with the IAH scheme will be defined, and the conditions under which the assumptions required by the theorem are satisfied will be examined.
The IAH iteration can be expressed via a fixed-point operator defined on the finite element spaces. To this end, let us define the operator $G: \mathcal{H}_h \longrightarrow \mathcal{H}_h$. For any input $(\bfu_h, p_h) \in \bX_h \times Q_h$, let $G(\bfu_h, p_h) = \bigl(G_1(\bfu_h, p_h), G_2(\bfu_h, p_h)\bigr)$, where $G_1(\bfu_h, p_h) \in \bX_h$ denotes the updated velocity component and $G_2(\bfu_h, p_h) \in Q_h$ denotes the updated pressure component.

\begin{definition}
\label{def:G_operator}
For any given $(\bfu_h, p_h) \in \bX_h \times Q_h$, the operator $G(\bfu_h, p_h) = \bigl(G_1(\bfu_h, p_h), G_2(\bfu_h, p_h)\bigr)$ is defined to satisfy the following equations for all test functions $(\bv_h, q_h) \in \bX_h \times Q_h$:
\begin{align}
&\frac{1}{\rho}\bigl(\nabla(G_1(\bfu_h, p_h) - \bfu_h), \nabla \bv_h\bigr)
+ \nu\bigl(\nabla G_1(\bfu_h, p_h), \nabla \bv_h\bigr)
+ b^*\bigl(\bfu_h, G_1(\bfu_h, p_h), \bv_h\bigr)
\nonumber \\
&\quad + \frac{\rho}{\alpha}\bigl(\nabla \cdot G_1(\bfu_h, p_h), \nabla \cdot \bv_h\bigr)
- \bigl(p_h, \nabla \cdot \bv_h\bigr)
 = (\bff, \bv_h),
\label{eq:G_momentum}
\\[6pt]
&\alpha\bigl(G_2(\bfu_h, p_h) - p_h,\, q_h\bigr)
+ \rho\bigl(\nabla \cdot G_1(\bfu_h, p_h),\, q_h\bigr)
 = 0.
\label{eq:G_mass}
\end{align}
Here $\rho > 0$ and $\alpha > 0$ are algorithmic parameters.
\end{definition}

\begin{remark}\label{rem:fixed_point_augmented}
A fixed point of $G$ satisfies the grad-div augmented discrete Navier--Stokes formulation
\begin{align*}
\nu(\nabla \bfu_h,\nabla \bv_h)
+ b^*(\bfu_h,\bfu_h,\bv_h)
+\frac{\rho}{\alpha}(\nabla\cdot \bfu_h,\nabla\cdot \bv_h)
-(p_h,\nabla\cdot \bv_h) &= (\bff,\bv_h), \\
(\nabla\cdot \bfu_h,q_h) &=0.
\end{align*}
This is the fixed-point problem induced by the IAH scheme. If the grad-div term vanishes on discretely divergence-free velocities, for example under a divergence-compatible discretization, this formulation coincides with the standard discrete NSE system~\eqref{eq:discrete_1}--\eqref{eq:discrete_2}; otherwise it is interpreted as the grad-div stabilized discrete formulation. In either case, testing the fixed-point equations with $\bv_h=\bfu_h$ and $q_h=p_h$, and using the skew-symmetry of $b^*$, gives
\[
\nu\|\nabla\bfu_h\|^2+\frac{\rho}{\alpha}\|\nabla\cdot\bfu_h\|^2=(\bff,\bfu_h),
\]
and hence the velocity stability bound $\|\nabla\bfu_h\|\le \nu^{-1}\|\bff\|_{-1}$.
\end{remark}

We now show the well-definedness and boundedness of the operator $G$ with respect to the norm on $\mathcal{H}_h$
\[
\|(\bv_h, q_h)\|_{\mathcal{H}_h} := \sqrt{\|\nabla \bv_h\|^2 + \alpha\|q_h\|^2}.
\]

\begin{lemma}\label{lem:G_welldefined}
The operator $G$ is well-defined. Furthermore, for every $(\bfu_h, p_h) \in \bX_h \times Q_h$, the pair $G(\bfu_h, p_h) = \bigl(G_1(\bfu_h, p_h), G_2(\bfu_h, p_h)\bigr)$ satisfies
\[
\|G(\bfu_h, p_h)\|_{\mathcal{H}_h} \leq \|(\bfu_h, p_h)\|_{\mathcal{H}_h} + \sqrt{\frac{\rho}{\nu}}\,\|\bff\|_{-1}.
\]
\end{lemma}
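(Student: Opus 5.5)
The plan is to treat the two components of $G$ in sequence: first show $G_1$ exists and is unique via a coercivity argument, then read off $G_2$ explicitly, and finally obtain the bound from an energy identity that combines \eqref{eq:G_momentum} and \eqref{eq:G_mass}.

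\emph{Well-definedness.} For fixed $(\bfu_h,p_h)$, equation \eqref{eq:G_momentum} is a linear problem for $\bw:=G_1(\bfu_h,p_h)\in\bX_h$ with bilinear form
\[
a(\bw,\bv_h)=\Bigl(\tfrac1\rho+\nu\Bigr)(\nabla\bw,\nabla\bv_h)+b^*(\bfu_h,\bw,\bv_h)+\tfrac{\rho}{\alpha}(\nabla\cdot\bw,\nabla\cdot\bv_h).
\]
The right-hand side $(\bff,\bv_h)+\frac1\rho(\nabla\bfu_h,\nabla\bv_h)+(p_h,\nabla\cdot\bv_h)$ is bounded. Skew-symmetry gives $b^*(\bfu_h,\bw,\bw)=0$, so $a(\bw,\bw)\ge(\frac1\rho+\nu)\|\nabla\bw\|^2$. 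The form is continuous by \eqref{eq:b_bound}. Lax--Milgram, or simply injectivity on the finite-dimensional space $\bX_h$, then gives a unique $\bw$. Equation \eqref{eq:G_mass} determines $s:=G_2(\bfu_h,p_h)$ uniquely as
\[
s=p_h-\tfrac{\rho}{\alpha}\Pi_h(\nabla\cdot\bw),
\]
where $\Pi_h$ is the $L^2$ projection onto $Q_h$.

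\emph{Energy bound.} Test \eqref{eq:G_momentum} with $\bv_h=\bw$ and multiply by $\rho$. Test \eqref{eq:G_mass} with $q_h=s$. Apply the polarization identity $(a-b,a)=\frac12(|a|^2-|b|^2+|a-b|^2)$ to both difference terms and add the two equations. The pressure coupling terms combine to
\[
\rho(\nabla\cdot\bw,s-p_h)=-\tfrac{\rho^2}{\alpha}\|\Pi_h\nabla\cdot\bw\|^2\ge-\tfrac{\rho^2}{\alpha}\|\nabla\cdot\bw\|^2.
\]
This is exactly absorbed by the grad-div contribution $\frac{\rho^2}{\alpha}\|\nabla\cdot\bw\|^2$. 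This cancellation is the key point: the grad-div weight $\rho/\alpha$ is precisely what makes the scheme unconditionally stable here. After dropping the nonnegative terms $\frac12\|\nabla(\bw-\bfu_h)\|^2$ and $\frac\alpha2\|s-p_h\|^2$, we obtain
\[
\tfrac12\|G(\bfu_h,p_h)\|_{\mathcal H_h}^2-\tfrac12\|(\bfu_h,p_h)\|_{\mathcal H_h}^2+\rho\nu\|\nabla\bw\|^2\le\rho\|\bff\|_{-1}\|\nabla\bw\|.
\]

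\emph{Conclusion.} Young's inequality gives
\[
\rho\|\bff\|_{-1}\|\nabla\bw\|\le\rho\nu\|\nabla\bw\|^2+\frac{\rho}{4\nu}\|\bff\|_{-1}^2,
\]
so that
\[
\|G\|_{\mathcal H_h}^2\le\|(\bfu_h,p_h)\|_{\mathcal H_h}^2+\frac{\rho}{2\nu}\|\bff\|_{-1}^2\le\Bigl(\|(\bfu_h,p_h)\|_{\mathcal H_h}+\sqrt{\rho/\nu}\,\|\bff\|_{-1}\Bigr)^2.
\]
Taking square roots yields the claim. The only delicate step is the sign bookkeeping in the pressure/grad-div cancellation; everything else is routine.
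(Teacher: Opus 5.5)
Your proof is correct and follows essentially the same route as the paper: coercivity via skew-symmetry for well-definedness, then testing with $(G_1,G_2)$, scaling the momentum equation by $\rho$, applying polarization to both difference terms, and finishing with Young's inequality. The only differences are small: you dispose of the cross term by solving the mass equation as $G_2-p_h=-\frac{\rho}{\alpha}\Pi_h(\nabla\cdot G_1)$, whereas the paper completes the square $\frac{\alpha}{2}\|P_h-p_h+\frac{\rho}{\alpha}\nabla\cdot\bU_h\|^2+\frac{\rho^2}{2\alpha}\|\nabla\cdot\bU_h\|^2$ without invoking the projection (both discard exactly the same nonnegative quantity), and your use of all of $\rho\nu\|\nabla\bw\|^2$ in Young gives the slightly sharper $\frac{\rho}{2\nu}\|\bff\|_{-1}^2$ in place of the paper's $\frac{\rho}{\nu}\|\bff\|_{-1}^2$.
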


\begin{proof}
Fix $(\bfu_h, p_h) \in \bX_h \times Q_h$. Set $\bU_h := G_1(\bfu_h, p_h)$ and $P_h := G_2(\bfu_h, p_h)$. Then the system~\eqref{eq:G_momentum}--\eqref{eq:G_mass} takes the form: for all $(\bv_h, q_h) \in \bX_h \times Q_h$,
\begin{align}
&\frac{1}{\rho}(\nabla \bU_h, \nabla \bv_h)
+ \nu(\nabla \bU_h, \nabla \bv_h)
+ b^*(\bfu_h, \bU_h, \bv_h)
+ \frac{\rho}{\alpha}(\nabla \cdot \bU_h, \nabla \cdot \bv_h)
\nonumber \\
&\qquad = (\bff, \bv_h) + \frac{1}{\rho}(\nabla \bfu_h, \nabla \bv_h) + (p_h, \nabla \cdot \bv_h),
\label{eq:G_linear_momentum} \\[4pt]
&\alpha(P_h, q_h) + \rho(\nabla \cdot \bU_h, q_h) = \alpha(p_h, q_h).
\label{eq:G_linear_mass}
\end{align}
To show that the system is well-defined, it suffices to verify that the corresponding homogeneous system admits only the trivial solution. Suppose $(\bU_h, P_h) \in \bX_h \times Q_h$ satisfies, for all $(\bv_h, q_h) \in \bX_h \times Q_h$,
\begin{align}
&\frac{1}{\rho}(\nabla \bU_h, \nabla \bv_h)
+ \nu(\nabla \bU_h, \nabla \bv_h)
+ b^*(\bfu_h, \bU_h, \bv_h)
+ \frac{\rho}{\alpha}(\nabla \cdot \bU_h, \nabla \cdot \bv_h) = 0,
\label{eq:G_hom_momentum} \\[4pt]
&\alpha(P_h, q_h) + \rho(\nabla \cdot \bU_h, q_h) = 0.
\label{eq:G_hom_mass}
\end{align}
Setting $\bv_h = \bU_h$ in~\eqref{eq:G_hom_momentum} and using the skew-symmetry $b^*(\bfu_h, \bU_h, \bU_h) = 0$ yields
\[
\frac{1}{\rho}\|\nabla \bU_h\|^2 + \nu\|\nabla \bU_h\|^2 + \frac{\rho}{\alpha}\|\nabla \cdot \bU_h\|^2 = 0,
\]
that is,
\[
\left(\frac{1}{\rho} + \nu\right)\|\nabla \bU_h\|^2 + \frac{\rho}{\alpha}\|\nabla \cdot \bU_h\|^2 = 0.
\]
Since $\rho, \alpha, \nu > 0$, we conclude $\|\nabla \bU_h\| = 0$, and since $\bU_h \in \bX_h \subset [H_0^1(\Omega)]^d$, it follows that $\bU_h = 0$. Setting $q_h = P_h$ in~\eqref{eq:G_hom_mass} and using $\bU_h = 0$ then gives $\alpha\|P_h\|^2 = 0$, hence $P_h = 0$. Thus the homogeneous system admits only the trivial solution, and by the fundamental theorem of finite-dimensional linear algebra, the system~\eqref{eq:G_linear_momentum}--\eqref{eq:G_linear_mass} has a unique solution for every $(\bfu_h, p_h) \in \bX_h \times Q_h$. Therefore, $G : \mathcal{H}_h \to \mathcal{H}_h$ is well-defined.

We now derive the stated bound.

Setting $\bv_h = \bU_h$ and $q_h = P_h$ in~\eqref{eq:G_momentum}--\eqref{eq:G_mass} and using $b^*(\bfu_h, \bU_h, \bU_h) = 0$, we obtain
\[
\frac{1}{\rho}(\nabla(\bU_h - \bfu_h), \nabla \bU_h)
+ \nu(\nabla \bU_h, \nabla \bU_h)
+ \frac{\rho}{\alpha}(\nabla \cdot \bU_h, \nabla \cdot \bU_h)
- (p_h, \nabla \cdot \bU_h) = (\bff, \bU_h)
\]
and
\[
\alpha(P_h - p_h, P_h) + \rho(\nabla \cdot \bU_h, P_h) = 0.
\]
Multiplying the momentum equation by $\rho$ and applying the polarization identity
\[
(a - b, a) = \frac{1}{2}\bigl(\|a\|^2 - \|b\|^2 + \|a - b\|^2\bigr)
\]
to both $(\nabla(\bU_h - \bfu_h), \nabla \bU_h)$ and $\alpha(P_h - p_h, P_h)$, then adding the resulting equations gives
\[
\begin{aligned}
&\frac{1}{2}\|\nabla \bU_h\|^2
+ \frac{\alpha}{2}\|P_h\|^2
+ \frac{1}{2}\|\nabla(\bU_h - \bfu_h)\|^2
+ \rho\nu\|\nabla \bU_h\|^2
+ \frac{\rho^2}{\alpha}\|\nabla \cdot \bU_h\|^2 \\
&\qquad + \frac{\alpha}{2}\|P_h - p_h\|^2
+ \rho(P_h - p_h, \nabla \cdot \bU_h) \\
& = \frac{1}{2}\|\nabla \bfu_h\|^2 + \frac{\alpha}{2}\|p_h\|^2 + \rho(\bff, \bU_h).
\end{aligned}
\]
Setting $a := \sqrt{\alpha}(P_h - p_h)$ and $b := \frac{\rho}{\sqrt{\alpha}}\nabla \cdot \bU_h$, the identity $\frac{1}{2}\|a\|^2 + (a, b) + \|b\|^2 = \frac{1}{2}\|a + b\|^2 + \frac{1}{2}\|b\|^2$ gives
\[
\frac{\alpha}{2}\|P_h - p_h\|^2
+ \rho(P_h - p_h, \nabla \cdot \bU_h)
 + \frac{\rho^2}{\alpha}\|\nabla \cdot \bU_h\|^2
 = \frac{\alpha}{2}\left\|P_h - p_h + \frac{\rho}{\alpha}\nabla \cdot \bU_h\right\|^2
+ \frac{\rho^2}{2\alpha}\|\nabla \cdot \bU_h\|^2 \geq 0.
\]
Dropping the non-negative terms $\frac{1}{2}\|\nabla(\bU_h - \bfu_h)\|^2$ and the expression above from the left-hand side, we obtain
\[
\frac{1}{2}\|\nabla \bU_h\|^2 + \frac{\alpha}{2}\|P_h\|^2 + \rho\nu\|\nabla \bU_h\|^2
\leq \frac{1}{2}\|\nabla \bfu_h\|^2 + \frac{\alpha}{2}\|p_h\|^2 + \rho(\bff, \bU_h).
\]
Applying the dual norm inequality and Young's inequality to the last term:
\[
|\rho(\bff, \bU_h)| \leq \rho\|\bff\|_{-1}\|\nabla \bU_h\| \leq \frac{\rho\nu}{2}\|\nabla \bU_h\|^2 + \frac{\rho}{2\nu}\|\bff\|_{-1}^2.
\]
Substituting and multiplying by $2$:
\[
(1 + \rho\nu)\|\nabla \bU_h\|^2 + \alpha\|P_h\|^2
\leq \|\nabla \bfu_h\|^2 + \alpha\|p_h\|^2 + \frac{\rho}{\nu}\|\bff\|_{-1}^2.
\]
Since $1 + \rho\nu > 1$, we conclude
\begin{equation}
\|G(\bfu_h, p_h)\|_{\mathcal{H}_h}^2 \leq \|(\bfu_h, p_h)\|_{\mathcal{H}_h}^2 + \frac{\rho}{\nu}\|\bff\|_{-1}^2.
\label{G_bound}
\end{equation}
Taking square roots and applying $\sqrt{a + b} \leq \sqrt{a} + \sqrt{b}$ yields the desired inequality.
\end{proof}

\begin{remark}
The bound established in Lemma~\ref{lem:G_welldefined} shows that the solution operator $G$ maps bounded inputs to bounded outputs. Since the constants appearing in the Lipschitz continuity, Fréchet differentiability, and Lipschitz continuity of the derivative operator depend on the input variables, it is natural to interpret these results in a local regime around the solution. To this end, when working on a sufficiently small closed and bounded neighborhood of a fixed point $x_h^*$, the finite-dimensionality of $\bX_h \times Q_h$ ensures that such a neighborhood is compact, and consequently the relevant constants can be bounded uniformly over it. Therefore, although the smoothness results that follow are stated in the general $\bX_h \times Q_h$ setting, they are to be interpreted as uniform constants on sufficiently small bounded neighborhoods around the fixed point.
\end{remark}

We now show the Lipschitz property of the solution operator $G$.

\begin{lemma} \label{lem:lipschitz}
For any pair $(\bu_h, p_h), (\bw_h, z_h) \in \bX_h\times Q_h$ with $\frac{3d}{2}\rho^2 < \alpha$, the following inequality holds:
\begin{equation}
    \|G(\bu_h, p_h) - G(\bw_h, z_h)\|_{\mathcal{H}_h} \le C_L \|(\bu_h, p_h) - (\bw_h, z_h)\|_{\mathcal{H}_h}.
\end{equation}
Here the constant $C_L$ is defined as
\begin{equation*}
    C_L = \sqrt{2}\max\left\{\sqrt{\frac{2}{K}},\frac{1}{\sqrt{K}} \left(1 + \rho \mathcal{M} \left(\|(\bw_h,z_h)\|_{\mathcal{H}_h}+\sqrt{\frac{\rho}{\nu}}\,\|\bff\|_{-1}\right)\right)\right\}.
\end{equation*} 
\end{lemma}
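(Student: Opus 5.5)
We subtract the defining equations~\eqref{eq:G_momentum}--\eqref{eq:G_mass} for the two inputs and then run an energy argument on the differences, in the spirit of the proof of Lemma~\ref{lem:G_welldefined}. Set $\bU_1=G_1(\bu_h,p_h)$, $\bU_2=G_1(\bw_h,z_h)$, $P_1=G_2(\bu_h,p_h)$, $P_2=G_2(\bw_h,z_h)$. Write $\be=\bU_1-\bU_2$, $\epsilon=P_1-P_2$, $\bphi=\bu_h-\bw_h$ and $\eta=p_h-z_h$. The trilinear terms are split as
\[
b^*(\bu_h,\bU_1,\bv_h)-b^*(\bw_h,\bU_2,\bv_h)=b^*(\bphi,\bU_1,\bv_h)+b^*(\bw_h,\be,\bv_h).
\]
The difference system then reads
\[
\frac1\rho(\nabla(\be-\bphi),\nabla\bv_h)+\nu(\nabla\be,\nabla\bv_h)+b^*(\bphi,\bU_1,\bv_h)+b^*(\bw_h,\be,\bv_h)+\frac\rho\alpha(\nabla\cdot\be,\nabla\cdot\bv_h)-(\eta,\nabla\cdot\bv_h)=0,
\]
together with $\alpha(\epsilon-\eta,q_h)+\rho(\nabla\cdot\be,q_h)=0$. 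Since the constant involves $\|(\bw_h,z_h)\|$, the split should be arranged so that the surviving trilinear term carries the output of $G$ at $(\bw_h,z_h)$. We therefore use the symmetric alternative $b^*(\bphi,\bU_2,\bv_h)+b^*(\bu_h,\be,\bv_h)$. Lemma~\ref{lem:G_welldefined} then gives
\[
\|\nabla\bU_2\|\le\|(\bw_h,z_h)\|_{\mathcal H_h}+\sqrt{\rho/\nu}\,\|\bff\|_{-1},
\]
which is exactly the bracket in $C_L$.

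\textbf{Velocity estimate.} Test with $\bv_h=\be$ and multiply by $\rho$. By skew-symmetry the term $b^*(\bu_h,\be,\be)$ vanishes. What remains on the left is $\|\nabla\be\|^2+\rho\nu\|\nabla\be\|^2+\frac{\rho^2}{\alpha}\|\nabla\cdot\be\|^2$. On the right we have
\[
(\nabla\bphi,\nabla\be)-\rho\,b^*(\bphi,\bU_2,\be)+\rho(\eta,\nabla\cdot\be),
\]
and these are bounded by \eqref{eq:b_bound} and Cauchy--Schwarz by
\[
\bigl(1+\rho\mathcal M\|\nabla\bU_2\|\bigr)\|\nabla\bphi\|\|\nabla\be\|+\rho\|\eta\|\|\nabla\cdot\be\|.
\]

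\textbf{Pressure estimate.} Test the mass difference with $q_h=\epsilon$. Rather than polarizing, write $\epsilon=\eta-\frac\rho\alpha\Pi\nabla\cdot\be$, where $\Pi$ is the $L^2$ projection onto $Q_h$. This gives
\[
\alpha\|\epsilon\|^2\le2\alpha\|\eta\|^2+\frac{2\rho^2}{\alpha}\|\nabla\cdot\be\|^2\le2\alpha\|\eta\|^2+\frac{2d\rho^2}{\alpha}\|\nabla\be\|^2,
\]
using $\|\nabla\cdot\bv\|^2\le d\|\nabla\bv\|^2$. The factor $\sqrt2$ and the hypothesis $\tfrac{3d}{2}\rho^2<\alpha$ presumably arise here. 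The term $\rho\|\eta\|\|\nabla\cdot\be\|$ is absorbed by Young's inequality against $\frac{\rho^2}{\alpha}\|\nabla\cdot\be\|^2$, leaving $\frac{\alpha}{4}\|\eta\|^2$ plus a small remainder. The first term is absorbed against half of $\|\nabla\be\|^2$.

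\textbf{Combining.} Adding the two estimates gives an inequality of the form
\[
K\bigl(\|\nabla\be\|^2+\alpha\|\epsilon\|^2\bigr)\le2\,\|\eta\|^2\alpha+\bigl(1+\rho\mathcal M\|\nabla\bU_2\|\bigr)^2\|\nabla\bphi\|^2,
\]
where $K>0$ is a coercivity constant of roughly the form $1-\tfrac{3d\rho^2}{2\alpha}$, or $1+\rho\nu$ minus such terms; its positivity is exactly the parameter condition. Taking the maximum of the two coefficients gives $\|G(\cdot)-G(\cdot)\|^2\le C_L^2\|\cdot\|^2$ with the stated form of $C_L$.

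\textbf{Main obstacle.} The main difficulty is the bookkeeping of the Young's-inequality weights. They must be chosen so that the $\|\nabla\cdot\be\|^2$ contributions from the pressure difference, and from the $\eta$ coupling, are dominated by $\|\nabla\be\|^2$ using $d\rho^2/\alpha$ terms. These weights have to produce precisely the threshold $\tfrac{3d}{2}\rho^2<\alpha$ and the coefficients $2/K$ and $1/K$. I would first attempt weights $\tfrac12$ throughout and then adjust $K$ accordingly.
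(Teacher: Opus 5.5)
Your argument is sound, and up to the velocity estimate it matches the paper's. You use the same splitting of $b^*$, so that $\bU_2=G_1(\bw_h,z_h)$ appears, and the same test with $\bv_h=\be$ after multiplying by $\rho$. The two routes diverge at the pressure.

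The paper tests the mass difference with $q_h=\epsilon$ and adds it to the momentum identity. It then \emph{discards} the grad-div term $\frac{\rho^2}{\alpha}\|\nabla\cdot\be\|^2$ and Young-bounds both divergence couplings $\rho(\eta,\nabla\cdot\be)$ and $\rho(\nabla\cdot\be,\epsilon)$ through $\|\nabla\cdot\be\|\le\sqrt d\,\|\nabla\be\|$. This yields $(\frac12-\frac{3d\rho^2}{4\alpha})\|\nabla\be\|^2+\frac{\alpha}{4}\|\epsilon\|^2\le 2\alpha\|\eta\|^2+\bigl(1+\rho^2\mathcal M^2(\|\nabla\bw_h\|^2+\alpha\|z_h\|^2+\frac{\rho}{\nu}\|\bff\|_{-1}^2)\bigr)\|\nabla\bphi\|^2$. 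So $K=\min\{\frac14,\frac12-\frac{3d\rho^2}{4\alpha}\}$, which is fixed only inside the proof, and the hypothesis $\frac{3d}{2}\rho^2<\alpha$ is exactly $K>0$. The factor $\sqrt2$ comes from the closing step $\sqrt{a+b}\le\sqrt a+\sqrt b$, not from the pressure bound as you guessed.

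You instead eliminate $\epsilon=\eta-\frac{\rho}{\alpha}\Pi\nabla\cdot\be$. This is legitimate because the IAH momentum step sees only the old pressure, so the estimates are triangular. As a result, the ``main obstacle'' you flag does not actually arise. For example:
\begin{itemize}
\item keep half of the grad-div term via $\rho\|\eta\|\|\nabla\cdot\be\|\le\frac{\rho^2}{2\alpha}\|\nabla\cdot\be\|^2+\frac{\alpha}{2}\|\eta\|^2$;
\item bound $A\|\nabla\bphi\|\|\nabla\be\|\le\frac12\|\nabla\be\|^2+\frac{A^2}{2}\|\nabla\bphi\|^2$ with $A=1+\rho\mathcal M\|\nabla\bU_2\|$;
\item add one quarter of your pressure bound \emph{before} invoking $\sqrt d$.
\end{itemize}
The divergence terms then cancel and
\[
(1+2\rho\nu)\|\nabla\be\|^2+\tfrac{\alpha}{2}\|\epsilon\|^2\le A^2\|\nabla\bphi\|^2+2\alpha\|\eta\|^2,
\]
which is your inequality with $K=\frac12$ and no restriction on $\rho,\alpha$ at all. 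Alternatively, as in your sketch, you can spend the whole grad-div term on the $\eta$-coupling and use the $\sqrt d$ form of the pressure bound. Weighting that bound by $\frac34$ gives $K=\min\{1+2\rho\nu-\frac{3d\rho^2}{2\alpha},\frac34\}$, whose positivity the hypothesis guarantees. Either constant is at least the paper's $K$, so the stated bound, with its extra factor $\sqrt2$, follows a fortiori. You do not need to reproduce the threshold ``precisely''.

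As for what each route buys: the paper's symmetric energy argument is a single template that it reuses verbatim for $G'$ and for the Fr\'echet remainder. Your elimination of the pressure gives a sharper, parameter-free Lipschitz constant. It also shows that $\frac{3d}{2}\rho^2<\alpha$ is an artifact of discarding the grad-div term rather than a property of $G$.
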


\begin{proof}
From Definition~\ref{def:G_operator}, we write the difference $G(\bu_h,p_h)-G(\bw_h,z_h)$. For every $\bv_h\in \bX_h$,
\begin{equation}
\begin{aligned}
&\quad (\nabla (G_1(\bu_h,p_h)-G_1(\bw_h,z_h)),\nabla \bv_h)
-(\nabla (\bu_h-\bw_h),\nabla \bv_h) \\
&\quad+\rho\nu(\nabla (G_1(\bu_h,p_h)-G_1(\bw_h,z_h)),\nabla \bv_h)
+\rho\, b^*(\bu_h,\, G_1(\bu_h,p_h)-G_1(\bw_h,z_h),\, \bv_h) \\
&\quad+\rho\, b^*(\bu_h-\bw_h,\, G_1(\bw_h,z_h),\, \bv_h)
+\frac{\rho^2}{\alpha}\big(\nabla\cdot (G_1(\bu_h,p_h)-G_1(\bw_h,z_h)),\, \nabla\cdot \bv_h\big) \\
&\quad = \rho\,(p_h-z_h,\nabla\cdot \bv_h).
\end{aligned} \label{denklem1}
\end{equation}
Moreover, for every $q_h\in Q_h$,
\begin{equation*}
\alpha\,(G_2(\bu_h,p_h)-G_2(\bw_h,z_h),q_h)-\alpha\,(p_h-z_h,q_h)+\rho\,(\nabla\cdot (G_1(\bu_h,p_h)-G_1(\bw_h,z_h)),q_h) = 0.
\end{equation*}
We now choose
\[
\bv_h = G_1(\bu_h,p_h)-G_1(\bw_h,z_h),\quad q_h = G_2(\bu_h,p_h)-G_2(\bw_h,z_h).
\]
This choice cancels the first nonlinear term in~\eqref{denklem1}. Dropping the positive terms
\[
\rho\nu\|\nabla(G_1(\bu_h,p_h)-G_1(\bw_h,z_h))\|^2, \quad
\frac{\rho^2}{\alpha}\|\nabla\cdot(G_1(\bu_h,p_h)-G_1(\bw_h,z_h))\|^2
\]
from the left-hand side and combining the remaining terms yields the following inequality:
\begin{align*}
&\quad \|\nabla (G_1(\bu_h,p_h)-G_1(\bw_h,z_h))\|^2
+\alpha\,\|G_2(\bu_h,p_h)-G_2(\bw_h,z_h)\|^2 \\
&\quad \leq
\rho\,(p_h-z_h,\nabla\cdot (G_1(\bu_h,p_h)-G_1(\bw_h,z_h))) \\
&\quad
+(\nabla (\bu_h-\bw_h),
\nabla (G_1(\bu_h,p_h)-G_1(\bw_h,z_h))) \\
&\quad
-\rho\, b^*(\bu_h-\bw_h,\,
G_1(\bw_h,z_h),\,
G_1(\bu_h,p_h)-G_1(\bw_h,z_h)) \\
&\quad
+\alpha\,(p_h-z_h,\,
G_2(\bu_h,p_h)-G_2(\bw_h,z_h)) \\
&\quad
-\rho\,(\nabla\cdot (G_1(\bu_h,p_h)-G_1(\bw_h,z_h)),\,
G_2(\bu_h,p_h)-G_2(\bw_h,z_h)).
\end{align*}
We now bound each term on the right-hand side. Applying the Cauchy--Schwarz inequality, the estimate $\|\nabla \cdot \bv_h\| \leq \sqrt{d}\,\|\nabla \bv_h\|$, and Young's inequality gives
\begin{align*}
&\quad\rho\,(p_h-z_h,\nabla\cdot (G_1(\bu_h,p_h)-G_1(\bw_h,z_h))) \\
&\quad \leq
\alpha\,\|p_h-z_h\|^2
+\frac{d\rho^2}{4\alpha}
\|\nabla (G_1(\bu_h,p_h)-G_1(\bw_h,z_h))\|^2,
\\[6pt]
&\quad(\nabla (\bu_h-\bw_h),
\nabla (G_1(\bu_h,p_h)-G_1(\bw_h,z_h))) \\
&\quad \leq
\|\nabla (\bu_h-\bw_h)\|^2
+\frac{1}{4}
\|\nabla (G_1(\bu_h,p_h)-G_1(\bw_h,z_h))\|^2.
\end{align*}
For the nonlinear term, we use the boundedness of the trilinear form~\eqref{eq:b_bound}, Young's inequality, and Lemma~\ref{lem:G_welldefined}:
\begin{align*}
&\qquad \left|
-\rho\, b^*\bigl(\bu_h-\bw_h,\,
G_1(\bw_h,z_h),\,
G_1(\bu_h,p_h)-G_1(\bw_h,z_h)\bigr)
\right|
\\
&\qquad \leq
\rho^2\,\mathcal{M}^2
\left(
\|\nabla \bw_h\|^2+\alpha\|z_h\|^2+\frac{\rho}{\nu}\|\bff\|_{-1}^2
\right)
\|\nabla(\bu_h-\bw_h)\|^2
\\
&\qquad
+\frac{1}{4}
\|\nabla(G_1(\bu_h,p_h)-G_1(\bw_h,z_h))\|^2.
\end{align*}
Finally, applying the Cauchy--Schwarz and Young inequalities gives
\begin{align*}
&\alpha\bigl(p_h-z_h,\,
G_2(\bu_h,p_h)-G_2(\bw_h,z_h)\bigr)
\leq
\alpha\|p_h-z_h\|^2
+\frac{\alpha}{4}
\|G_2(\bu_h,p_h)-G_2(\bw_h,z_h)\|^2,
\\[6pt]
&\left|
-\rho\bigl(
\nabla\cdot(G_1(\bu_h,p_h)-G_1(\bw_h,z_h)),\,
G_2(\bu_h,p_h)-G_2(\bw_h,z_h)
\bigr)
\right|
\\
&\leq
\frac{\alpha}{2}
\|G_2(\bu_h,p_h)-G_2(\bw_h,z_h)\|^2
+\frac{d\rho^2}{2\alpha}
\|\nabla(G_1(\bu_h,p_h)-G_1(\bw_h,z_h))\|^2.
\end{align*}
Combining all these bounds yields
\begin{equation*}
\begin{aligned}
&\left(\frac{1}{2}-\frac{3d\rho^2\alpha^{-1}}{4}\right)\,\|\nabla (G_1(\bu_h,p_h)-G_1(\bw_h,z_h))\|^2
+\frac{\alpha}{4}\,\|G_2(\bu_h,p_h)-G_2(\bw_h,z_h)\|^2 \\
&\le 2\alpha\,\|p_h-z_h\|^2
+\left(1+\rho^2\mathcal{M}^2\left(\|\nabla \bw_h\|^2+\alpha\|z_h\|^2+\frac{\rho}{\nu}\|\bff\|_{-1}^2\right)\right)\,\|\nabla (\bu_h-\bw_h)\|^2.
\end{aligned}
\end{equation*}
Define $K = \min\left\{\frac{1}{4},\ \left(\frac{1}{2}-\frac{3d\rho^2\alpha^{-1}}{4}\right)\right\}$. Dividing both sides by $K$, taking square roots, and applying $\sqrt{a+b} \leq \sqrt{a} + \sqrt{b}$, then taking the maximum of the coefficients of the desired norms on the right-hand side, yields the result.
\end{proof}

We now define the operator $G'$ and show that it is indeed the Fréchet derivative operator of $G$.

\begin{definition}\label{def:Frechet}
For a given $(\bu_h,p_h)\in \bX_h\times Q_h$, define the operator
$G'(\bu_h,p_h;\cdot,\cdot):\bX_h\times Q_h\to \bX_h\times Q_h$ by
$G'(\bu_h,p_h;\bw_h,s_h) := \bigl(G_1'(\bu_h,p_h;\bw_h,s_h),\,G_2'(\bu_h,p_h;\bw_h,s_h)\bigr)$
as follows. For every $(\bw_h,s_h)\in \bX_h\times Q_h$ and every
$(\bv_h,q_h)\in \bX_h\times Q_h$, the following equations hold:
\begin{align}
&\bigl(\nabla (G_1'(\bu_h,p_h;\bw_h,s_h)-\bw_h),\nabla \bv_h\bigr)
+\rho\nu\bigl(\nabla G_1'(\bu_h,p_h;\bw_h,s_h),\nabla \bv_h\bigr)
\nonumber\\
&\quad
+\rho\, b^*\bigl(\bu_h,G_1'(\bu_h,p_h;\bw_h,s_h),\bv_h\bigr)
+\rho\, b^*\bigl(\bw_h,G_1(\bu_h,p_h),\bv_h\bigr)
\nonumber\\
&\quad
+\frac{\rho^2}{\alpha}
\bigl(\nabla\cdot G_1'(\bu_h,p_h;\bw_h,s_h),\nabla\cdot \bv_h\bigr)
-\rho\bigl(s_h,\nabla\cdot \bv_h\bigr) = 0,
\label{eq:Gprime_momentum}
\\[6pt]
&\alpha\bigl(G_2'(\bu_h,p_h;\bw_h,s_h)-s_h,q_h\bigr)
+\rho\bigl(\nabla\cdot G_1'(\bu_h,p_h;\bw_h,s_h),q_h\bigr) = 0.
\label{eq:Gprime_mass}
\end{align}
\end{definition}

\begin{lemma}\label{lem:Gprime_welldefined}
The operator $G'$ defined above is well-defined. That is, for every $(\bu_h,p_h),(\bw_h,s_h)\in \bX_h\times Q_h$ with $\frac{3d}{2}\rho^2 < \alpha$, the following inequality holds:
\[
\big\|G'(\bu_h,p_h;\bw_h,s_h)\big\|_{\mathcal{H}_h} \le C_L\,\big\|(\bw_h,s_h)\big\|_{\mathcal{H}_h}.
\]
\end{lemma}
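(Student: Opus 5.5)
The proof has two parts. First we show that $G'$ is well-defined, which is a finite-dimensional linear-algebra argument. Then we derive the bound, which copies the energy argument in the proof of Lemma~\ref{lem:lipschitz} with the differences of iterates replaced by the direction $(\bw_h,s_h)$.

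For fixed $(\bu_h,p_h)$ and $(\bw_h,s_h)$, the system \eqref{eq:Gprime_momentum}--\eqref{eq:Gprime_mass} is linear and square in the unknown pair $(G_1',G_2')\in \bX_h\times Q_h$. The quantities $\bw_h$, $s_h$ and $G_1(\bu_h,p_h)$ are data; the last is well-defined by Lemma~\ref{lem:G_welldefined}. So it suffices to show that the homogeneous system has only the trivial solution. The homogeneous system is exactly the one treated in the proof of Lemma~\ref{lem:G_welldefined}, scaled by $\rho$. Testing with $\bv_h=G_1'$ and using $b^*(\bu_h,G_1',G_1')=0$ gives $(1+\rho\nu)\|\nabla G_1'\|^2+\frac{\rho^2}{\alpha}\|\nabla\cdot G_1'\|^2=0$, hence $G_1'=0$. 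Testing the mass equation with $q_h=G_2'$ then gives $G_2'=0$. Uniqueness therefore implies existence, and $G'(\bu_h,p_h;\cdot,\cdot)$ is a well-defined linear map.

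For the bound, write $\bU':=G_1'(\bu_h,p_h;\bw_h,s_h)$ and $P':=G_2'(\bu_h,p_h;\bw_h,s_h)$, and test with $\bv_h=\bU'$, $q_h=P'$. The term $\rho\,b^*(\bu_h,\bU',\bU')$ vanishes by skew-symmetry. We drop the nonnegative terms $\rho\nu\|\nabla\bU'\|^2$ and $\frac{\rho^2}{\alpha}\|\nabla\cdot\bU'\|^2$ and add the two equations. This leaves the same structure as in Lemma~\ref{lem:lipschitz}: $\|\nabla\bU'\|^2+\alpha\|P'\|^2$ is bounded by five terms. Four of them are
\[
(\nabla\bw_h,\nabla\bU'),\quad \rho(s_h,\nabla\cdot\bU'),\quad \alpha(s_h,P'),\quad -\rho(\nabla\cdot\bU',P').
\]
The fifth is the nonlinear term $-\rho\,b^*(\bw_h,G_1(\bu_h,p_h),\bU')$. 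We bound each term with the same Young splittings as before. The nonlinear term is estimated by \eqref{eq:b_bound} together with the bound $\|\nabla G_1(\bu_h,p_h)\|\le\|(\bu_h,p_h)\|_{\mathcal{H}_h}+\sqrt{\rho/\nu}\,\|\bff\|_{-1}$ from Lemma~\ref{lem:G_welldefined}. This yields
\[
\Bigl(\tfrac12-\tfrac{3d\rho^2}{4\alpha}\Bigr)\|\nabla\bU'\|^2+\tfrac{\alpha}{4}\|P'\|^2\le 2\alpha\|s_h\|^2+\Bigl(1+\rho^2\mathcal M^2\bigl(\|(\bu_h,p_h)\|_{\mathcal{H}_h}+\sqrt{\rho/\nu}\|\bff\|_{-1}\bigr)^2\Bigr)\|\nabla\bw_h\|^2 .
\]
The hypothesis $\frac{3d}{2}\rho^2<\alpha$ makes the first coefficient positive. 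Dividing by $K$, taking square roots and maximizing the coefficients gives the constant $C_L$ of Lemma~\ref{lem:lipschitz}. Here the base point appearing in $C_L$ is $(\bu_h,p_h)$ rather than $(\bw_h,z_h)$, which is consistent with the local interpretation in the preceding remark.

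The only delicate step is the nonlinear term. Unlike in Lemma~\ref{lem:lipschitz}, it involves $G_1(\bu_h,p_h)$ evaluated at the base point, so one must take care that the constant depends on the base point and not on the direction $(\bw_h,s_h)$. Only then is the estimate genuinely linear in $\|(\bw_h,s_h)\|_{\mathcal{H}_h}$. Everything else is a verbatim repetition of the earlier estimates.
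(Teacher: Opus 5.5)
Your proposal is correct and follows essentially the same route as the paper. You test with $(G_1',G_2')$, drop the $\rho\nu$ and grad-div terms, reuse the five Young splittings from Lemma~\ref{lem:lipschitz} with $\|\nabla G_1(\bu_h,p_h)\|$ controlled by Lemma~\ref{lem:G_welldefined}, and divide by $K$. Your explicit injectivity argument for the square linear system, which the paper leaves implicit, and your observation that the constant is $C_L$ evaluated at the base point $(\bu_h,p_h)$ rather than at the direction are both correct and make the statement more precise.
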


\begin{proof}
Set $\bv_h = G_1'(\bu_h,p_h;\bw_h,s_h), q_h = G_2'(\bu_h,p_h;\bw_h,s_h)$
in~\eqref{eq:Gprime_momentum}--\eqref{eq:Gprime_mass} and add the resulting equations:
\begin{align*}
&\|\nabla G_1'(\bu_h,p_h;\bw_h,s_h)\|^2
+\alpha\|G_2'(\bu_h,p_h;\bw_h,s_h)\|^2
+\rho\nu\|\nabla G_1'(\bu_h,p_h;\bw_h,s_h)\|^2
+\frac{\rho^2}{\alpha}
\|\nabla\cdot G_1'(\bu_h,p_h;\bw_h,s_h)\|^2
\\
& = 
(\nabla \bw_h,\nabla G_1'(\bu_h,p_h;\bw_h,s_h))
+\alpha(s_h,G_2'(\bu_h,p_h;\bw_h,s_h))
+\rho(s_h,\nabla\cdot G_1'(\bu_h,p_h;\bw_h,s_h))
\\
&\quad
-\rho\, b^*(\bw_h,G_1(\bu_h,p_h),G_1'(\bu_h,p_h;\bw_h,s_h))
-\rho(\nabla\cdot G_1'(\bu_h,p_h;\bw_h,s_h),G_2'(\bu_h,p_h;\bw_h,s_h)).
\end{align*}
Dropping the positive terms
$\rho\nu\|\nabla G_1'(\bu_h,p_h;\bw_h,s_h)\|^2
\quad \text{and} \quad
\frac{\rho^2}{\alpha}
\|\nabla\cdot G_1'(\bu_h,p_h;\bw_h,s_h)\|^2$
from the left-hand side and bounding the right-hand side using the Cauchy--Schwarz inequality, Young's inequality, the boundedness of the trilinear form~\eqref{eq:b_bound}, and Lemma~\ref{lem:G_welldefined}, following the same strategy as in the proof of Lemma~\ref{lem:lipschitz}, we arrive at
\begin{equation*}
\begin{aligned}
&\left(\frac{1}{2}-\frac{3d\rho^2\alpha^{-1}}{4}\right)\,\|\nabla G_1'(\bu_h,p_h;\bw_h,s_h)\|^2
+\frac{\alpha}{4}\,\|G_2'(\bu_h,p_h;\bw_h,s_h)\|^2 \\
&\qquad\le \left(1+\rho^2\mathcal{M}^2\left(\|\nabla \bfu_h\|^2+\alpha\|p_h\|^2+\frac{\rho}{\nu}\|\bff\|_{-1}^2\right)\right)\,\|\nabla \bw_h\|^2
+2\alpha\|s_h\|^2.
\end{aligned}
\end{equation*}
Defining $K$ as in Lemma~\ref{lem:lipschitz}, dividing both sides by $K$, and applying the same argument yields the result.
\end{proof}
We now show that the operator $G'$ defined in Definition~\ref{def:Frechet} is the Fréchet
derivative operator of $G$ defined in Definition~\ref{def:G_operator}.
\begin{lemma}\label{lem:Gprime_is_Frechet}
The operator $G'$ defined above is the Fréchet derivative of $G$. That is, if
$\frac{d\rho^2}{1+2\rho\nu}<\alpha$, then for every $(\bu_h,p_h)\in \bX_h\times Q_h$,
\begin{equation*}
\big\|G(\bu_h+\bw_h, p_h+s_h)-G(\bu_h,p_h)-G'(\bu_h,p_h;\bw_h,s_h)\big\|_{\mathcal{H}_h}
\le \frac{\rho\mathcal{M}C_L}{\sqrt{2K'}}\,\big\|(\bw_h,s_h)\big\|_{\mathcal{H}_h}^2.
\end{equation*}
Here
\[
K' = \min\left\{\frac{1}{2}+\rho\nu-\frac{d\rho^2\alpha^{-1}}{2},\ \frac{1}{2}\right\}.
\]
\end{lemma}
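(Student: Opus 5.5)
Set $\bm{e}_h := G_1(\bu_h+\bw_h,p_h+s_h)-G_1(\bu_h,p_h)-G_1'(\bu_h,p_h;\bw_h,s_h)\in\bX_h$ and $\eta_h := G_2(\bu_h+\bw_h,p_h+s_h)-G_2(\bu_h,p_h)-G_2'(\bu_h,p_h;\bw_h,s_h)\in Q_h$. The plan is to write the equations for $(\bm{e}_h,\eta_h)$ by subtracting Definition~\ref{def:G_operator} at $(\bu_h,p_h)$ and Definition~\ref{def:Frechet} from Definition~\ref{def:G_operator} at $(\bu_h+\bw_h,p_h+s_h)$, everything scaled by $\rho$ as in \eqref{denklem1}. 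All linear terms cancel exactly: the $\bw_h$ and $s_h$ terms, $\rho\nu$, grad-div, and $-\rho(s_h,\nabla\cdot\bv_h)$. In the mass equation the $s_h$ contributions cancel and only $\alpha(\eta_h,q_h)+\rho(\nabla\cdot\bm{e}_h,q_h)=0$ survives. For the trilinear terms I would expand
\[
b^*(\bu_h+\bw_h,G_1(\bu_h+\bw_h,p_h+s_h),\bv_h)-b^*(\bu_h,G_1(\bu_h,p_h),\bv_h)-b^*(\bu_h,G_1',\bv_h)-b^*(\bw_h,G_1(\bu_h,p_h),\bv_h),
\]
and regroup it as $b^*(\bu_h+\bw_h,\bm{e}_h,\bv_h)+b^*\bigl(\bw_h,\,G_1'(\bu_h,p_h;\bw_h,s_h),\bv_h\bigr)$, checking the regrouping by adding and subtracting. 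The first piece vanishes when $\bv_h=\bm{e}_h$, by skew-symmetry. So the error equations are
\begin{align*}
&(1+\rho\nu)(\nabla\bm{e}_h,\nabla\bv_h)+\rho\, b^*(\bu_h+\bw_h,\bm{e}_h,\bv_h)+\frac{\rho^2}{\alpha}(\nabla\cdot\bm{e}_h,\nabla\cdot\bv_h)=-\rho\, b^*(\bw_h,G_1'(\bu_h,p_h;\bw_h,s_h),\bv_h),\\
&\alpha(\eta_h,q_h)+\rho(\nabla\cdot\bm{e}_h,q_h)=0.
\end{align*}

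Next, test with $\bv_h=\bm{e}_h$ and $q_h=\eta_h$ and add. The left-hand side gives $(1+\rho\nu)\|\nabla\bm{e}_h\|^2+\frac{\rho^2}{\alpha}\|\nabla\cdot\bm{e}_h\|^2+\alpha\|\eta_h\|^2$, with the cross term $\rho(\nabla\cdot\bm{e}_h,\eta_h)$ moved to the right. I would bound that cross term by $\frac{\alpha}{2}\|\eta_h\|^2+\frac{d\rho^2}{2\alpha}\|\nabla\bm{e}_h\|^2$, using $\|\nabla\cdot\bm{e}_h\|\le\sqrt d\|\nabla\bm{e}_h\|$; the $\frac{\rho^2}{\alpha}\|\nabla\cdot\bm{e}_h\|^2$ term can simply be dropped. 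The nonlinear right-hand side is bounded via \eqref{eq:b_bound} by
\[
\rho\mathcal{M}\|\nabla\bw_h\|\,\|\nabla G_1'\|\,\|\nabla\bm{e}_h\|\le \rho\mathcal M C_L\|(\bw_h,s_h)\|_{\mathcal H_h}^2\|\nabla\bm e_h\|,
\]
where Lemma~\ref{lem:Gprime_welldefined} supplies $\|\nabla G_1'\|\le C_L\|(\bw_h,s_h)\|_{\mathcal H_h}$.

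This leaves
\[
\Bigl(\tfrac12+\rho\nu-\tfrac{d\rho^2}{2\alpha}\Bigr)\|\nabla\bm e_h\|^2+\tfrac12\alpha\|\eta_h\|^2\ \lesssim\ \rho\mathcal M C_L\|(\bw_h,s_h)\|^2_{\mathcal H_h}\|\nabla\bm e_h\|.
\]
Careful bookkeeping is needed here: I expect the leading coefficient $1+\rho\nu-\frac{d\rho^2}{2\alpha}$ with the $\alpha$ coefficient $\tfrac12$. To match $K'$ exactly, I would absorb part of the nonlinear term with Young's inequality, $\rho\mathcal M C_L X\|\nabla\bm e_h\|\le\frac12\|\nabla\bm e_h\|^2+\frac12(\rho\mathcal M C_L X)^2$ where $X=\|(\bw_h,s_h)\|^2_{\mathcal H_h}$. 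That leaves coefficient $\frac12+\rho\nu-\frac{d\rho^2}{2\alpha}$ on $\|\nabla\bm e_h\|^2$, positive exactly when $\frac{d\rho^2}{1+2\rho\nu}<\alpha$, and coefficient $\frac12$ on $\alpha\|\eta_h\|^2$. Both coefficients are then at least $K'$, so $K'\|(\bm e_h,\eta_h)\|^2_{\mathcal H_h}\le\frac12(\rho\mathcal M C_L)^2X^2$. Taking square roots gives the stated constant $\rho\mathcal M C_L/\sqrt{2K'}$.

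The main obstacle is the algebraic regrouping of the trilinear difference so that the leftover nonlinear term involves $G_1'$, which Lemma~\ref{lem:Gprime_welldefined} controls. The alternative grouping leaves $G_1(\bu_h+\bw_h,\cdot)-G_1(\bu_h,\cdot)$, controlled instead by Lemma~\ref{lem:lipschitz}, and either choice gives a quadratic bound. A secondary point is that $C_L$ must be evaluated at the base point $(\bu_h,p_h)$, consistent with the uniform-constant remark on a neighborhood of the fixed point, and that Lemma~\ref{lem:Gprime_welldefined} itself needs $\frac{3d}{2}\rho^2<\alpha$, which is implicit in the use of $C_L$.
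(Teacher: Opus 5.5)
Your proposal is correct and follows the paper's argument essentially step for step. The steps match: the same error equations, testing with the error pair, Young's inequality producing the coefficients $\frac12+\rho\nu-\frac{d\rho^2}{2\alpha}$ and $\frac12$, and division by $K'$. The only difference is the regrouping of the trilinear terms. The paper writes them as $\rho\, b^*(\bu_h,\bfeta_1,\bv_h)+\rho\, b^*\bigl(\bw_h,G_1(\bu_h+\bw_h,p_h+s_h)-G_1(\bu_h,p_h),\bv_h\bigr)$ and invokes Lemma~\ref{lem:lipschitz}, whereas you use convection by $\bu_h+\bw_h$ and invoke Lemma~\ref{lem:Gprime_welldefined} on $G_1'$. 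Both lemmas supply the same $C_L$ evaluated at the base point $(\bu_h,p_h)$, so you obtain the identical constant.
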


\begin{proof}
Let
\begin{align*}
\bfeta_1 &:= G_1(\bu_h+\bw_h,p_h+s_h)-G_1(\bu_h,p_h)-G_1'(\bu_h,p_h;\bw_h,s_h),\\
\eta_2 &:= G_2(\bu_h+\bw_h,p_h+s_h)-G_2(\bu_h,p_h)-G_2'(\bu_h,p_h;\bw_h,s_h).
\end{align*}
From Definition~\ref{def:G_operator} and Definition~\ref{def:Frechet}, we compute the
difference $G(\bu_h+\bw_h,p_h+s_h)-G(\bu_h,p_h)-G'(\bu_h,p_h;\bw_h,s_h)$:
\begin{align}
(1+\rho\nu)(\nabla\bfeta_1,\nabla\bv_h)+\frac{\rho^2}{\alpha}(\nabla \cdot \bfeta_1,\nabla \cdot \bv_h)+\rho\,b^*(\bu_h,\bfeta_1,\bv_h)
\;\nonumber \\+\rho\,b^*\big(\bw_h, G_1(\bu_h+\bw_h,p_h+s_h)-G_1(\bu_h,p_h),\bv_h\big) = 0,\label{eq:Frechet_diff_mom}\\
\alpha(\eta_2,q_h)+\rho(\nabla \cdot \bfeta_1,q_h) = 0.
\end{align}
Setting $\bv_h = \bfeta_1$ and $q_h = \eta_2$ cancels the first nonlinear term on the left-hand
side. Adding the resulting equations and using absolute values gives
\begin{equation}\label{eq:Frechet_energy_1}
(1+\rho\nu)\|\nabla\bfeta_1\|^2+\alpha\|\eta_2\|^2
\le \rho\left|b^*\big(\bw_h, G_1(\bu_h+\bw_h,p_h+s_h)-G_1(\bu_h,p_h),\bfeta_1\big)\right|+\rho\left|(\nabla \cdot \bfeta_1,\eta_2)\right|,
\end{equation}
where we have dropped the positive term $\frac{\rho^2}{\alpha}\|\nabla\cdot \bfeta_1\|^2$
from the left-hand side of~\eqref{eq:Frechet_diff_mom}.
We bound the nonlinear term on the right-hand side of~\eqref{eq:Frechet_energy_1} using
the boundedness of the trilinear form~\eqref{eq:b_bound}, Young's inequality, and
Lemma~\ref{lem:lipschitz}. The remaining term is bounded using the Cauchy--Schwarz
inequality and $\|\nabla \cdot \bfeta_1\|\leq \sqrt{d}\,\|\nabla \bfeta_1\|$, followed by Young's
inequality, to obtain
\begin{align*}
(1+\rho\nu)\|\nabla\bfeta_1\|^2+\alpha\|\eta_2\|^2
&\leq
\frac{\rho^2\mathcal{M}^2C_L^2}{2}
\|(\bw_h,s_h)\|_{\mathcal{H}_h}^2\|\nabla\bw_h\|^2
\\
&\quad
+\frac{1}{2}\|\nabla\bfeta_1\|^2
+\frac{d\rho^2\alpha^{-1}}{2}\|\nabla\bfeta_1\|^2
+\frac{\alpha}{2}\|\eta_2\|^2.
\end{align*}
Using $\big\|(\bw_h,s_h)\big\|_{\mathcal{H}_h}^2 = \|\nabla\bw_h\|^2+\alpha\|s_h\|^2 \ge \|\nabla\bw_h\|^2$
and rearranging yields
\begin{align*}
&\left(\frac{1}{2}+\rho\nu-\frac{d\rho^2\alpha^{-1}}{2}\right)
\|\nabla\bfeta_1\|^2
+\frac{\alpha}{2}\|\eta_2\|^2
\\
&\quad \leq
\frac{\rho^2\mathcal{M}^2C_L^2}{2}
\|(\bw_h,s_h)\|_{\mathcal{H}_h}^2\|\nabla\bw_h\|^2
\\
&\quad \leq
\frac{\rho^2\mathcal{M}^2C_L^2}{2}
\|(\bw_h,s_h)\|_{\mathcal{H}_h}^4.
\end{align*}
Defining $K' = \min\left\{\frac{1}{2}+\rho\nu-\frac{d\rho^2\alpha^{-1}}{2},\frac{1}{2}\right\}$,
dividing both sides by $K'$, and taking square roots on both sides yields the result.
\end{proof}
We now show the Lipschitz property of the operator $G'$.
\begin{lemma}\label{lem:Gprime_lipschitz}
The operator $G'$ is Lipschitz continuous. That is, for all $\bu_h,\bw_h,\btheta_h\in \bX_h$
and $p_h,s_h,\xi_h\in Q_h$, if $\frac{d\rho^2}{1+2\rho\nu}<\alpha$, then the following
inequality holds:
\begin{equation*}
\big\|G'(\bu_h+\bw_h,p_h+s_h;\btheta_h,\xi_h)
-G'(\bu_h,p_h;\btheta_h,\xi_h)\big\|_{\mathcal{H}_h}
\leq
\sqrt{\frac{2}{K'}}\,\rho\mathcal{M}C_L\,
\big\|(\bw_h,s_h)\big\|_{\mathcal{H}_h}\,
\big\|(\btheta_h,\xi_h)\big\|_{\mathcal{H}_h}.
\end{equation*}
Here $K'$ is the constant defined in Lemma~\ref{lem:Gprime_is_Frechet}.
\end{lemma}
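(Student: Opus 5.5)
We follow the same energy strategy as in the proof of Lemma~\ref{lem:Gprime_is_Frechet}. Set
\[
\bphi_1 := G_1'(\bu_h+\bw_h,p_h+s_h;\btheta_h,\xi_h)-G_1'(\bu_h,p_h;\btheta_h,\xi_h),\qquad
\phi_2 := G_2'(\bu_h+\bw_h,p_h+s_h;\btheta_h,\xi_h)-G_2'(\bu_h,p_h;\btheta_h,\xi_h).
\]
We subtract the two instances of \eqref{eq:Gprime_momentum}--\eqref{eq:Gprime_mass}, written at the base points $(\bu_h+\bw_h,p_h+s_h)$ and $(\bu_h,p_h)$ with the same direction $(\btheta_h,\xi_h)$.

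In the difference, the terms $-(\nabla\btheta_h,\nabla\bv_h)$, $-\rho(\xi_h,\nabla\cdot\bv_h)$ and $-\alpha(\xi_h,q_h)$ cancel. The linear terms give $(1+\rho\nu)(\nabla\bphi_1,\nabla\bv_h)+\frac{\rho^2}{\alpha}(\nabla\cdot\bphi_1,\nabla\cdot\bv_h)$ and $\alpha(\phi_2,q_h)+\rho(\nabla\cdot\bphi_1,q_h)$.

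For the convective terms, we add and subtract $\rho\,b^*(\bu_h,G_1'(\bu_h+\bw_h,p_h+s_h;\btheta_h,\xi_h),\bv_h)$. This produces $\rho\,b^*(\bu_h,\bphi_1,\bv_h)$ together with two source terms:
\[
\rho\,b^*\bigl(\bw_h,G_1'(\bu_h+\bw_h,p_h+s_h;\btheta_h,\xi_h),\bv_h\bigr)
\quad\text{and}\quad
\rho\,b^*\bigl(\btheta_h,G_1(\bu_h+\bw_h,p_h+s_h)-G_1(\bu_h,p_h),\bv_h\bigr).
\]

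We then test with $\bv_h=\bphi_1$ and $q_h=\phi_2$ and add the equations. By skew-symmetry the term $b^*(\bu_h,\bphi_1,\bphi_1)$ vanishes, and we drop the nonnegative grad-div term. This leaves
\[
(1+\rho\nu)\|\nabla\bphi_1\|^2+\alpha\|\phi_2\|^2\le \rho|b^*(\cdots)|+\rho|b^*(\cdots)|+\rho|(\nabla\cdot\bphi_1,\phi_2)|.
\]
The coupling term is handled exactly as before: $\rho|(\nabla\cdot\bphi_1,\phi_2)|\le \frac{d\rho^2}{2\alpha}\|\nabla\bphi_1\|^2+\frac{\alpha}{2}\|\phi_2\|^2$.

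Each trilinear term is bounded with \eqref{eq:b_bound}. For the first we use Lemma~\ref{lem:Gprime_welldefined}, which gives $\|\nabla G_1'(\cdot;\btheta_h,\xi_h)\|\le C_L\|(\btheta_h,\xi_h)\|_{\mathcal{H}_h}$. For the second we use Lemma~\ref{lem:lipschitz}, which gives $\|\nabla(G_1(\bu_h+\bw_h,p_h+s_h)-G_1(\bu_h,p_h))\|\le C_L\|(\bw_h,s_h)\|_{\mathcal{H}_h}$. Together with $\|\nabla\bw_h\|\le\|(\bw_h,s_h)\|_{\mathcal{H}_h}$ and $\|\nabla\btheta_h\|\le\|(\btheta_h,\xi_h)\|_{\mathcal{H}_h}$, both terms are bounded by
\[
\rho\mathcal{M}C_L\,\|(\bw_h,s_h)\|_{\mathcal{H}_h}\|(\btheta_h,\xi_h)\|_{\mathcal{H}_h}\,\|\nabla\bphi_1\|,
\]
so their sum is at most $2\rho\mathcal{M}C_L AB\|\nabla\bphi_1\|$, where $A:=\|(\bw_h,s_h)\|_{\mathcal{H}_h}$ and $B:=\|(\btheta_h,\xi_h)\|_{\mathcal{H}_h}$. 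Young's inequality then gives
\[
2\rho\mathcal{M}C_L AB\|\nabla\bphi_1\|\le \tfrac12\|\nabla\bphi_1\|^2+2\rho^2\mathcal{M}^2C_L^2A^2B^2.
\]

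Absorbing the $\bphi_1$ and $\phi_2$ terms into the left-hand side yields
\[
\Bigl(\tfrac12+\rho\nu-\tfrac{d\rho^2}{2\alpha}\Bigr)\|\nabla\bphi_1\|^2+\tfrac{\alpha}{2}\|\phi_2\|^2\le 2\rho^2\mathcal{M}^2C_L^2A^2B^2.
\]
The hypothesis $\frac{d\rho^2}{1+2\rho\nu}<\alpha$ is exactly what makes the first coefficient positive, so $K'>0$. Dividing by $K'$ and taking square roots gives $\|(\bphi_1,\phi_2)\|_{\mathcal{H}_h}\le\sqrt{2/K'}\,\rho\mathcal{M}C_L\,AB$, which is the claim.

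The main obstacle is the bookkeeping of the constant $C_L$. Lemma~\ref{lem:Gprime_welldefined} must be applied at the perturbed point $(\bu_h+\bw_h,p_h+s_h)$, and Lemma~\ref{lem:lipschitz} requires its second argument to be $(\bu_h,p_h)$. Both lemmas also need $\frac{3d}{2}\rho^2<\alpha$. So $C_L$ must be read, as in the preceding remark, as a uniform constant over a bounded neighborhood of the fixed point, and that lemma hypothesis is implicitly assumed as well.
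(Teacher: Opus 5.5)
Your proof is correct and follows the paper's argument essentially line for line. You use the same difference equations, the same test functions $\bv_h=\bphi_1$, $q_h=\phi_2$, the same appeal to Lemmas~\ref{lem:lipschitz} and~\ref{lem:Gprime_welldefined} for the two trilinear terms, and the same absorption step leading to $\sqrt{2/K'}\,\rho\mathcal{M}C_L$. The only cosmetic difference is that you apply Young once to the summed trilinear terms rather than twice with weight $\tfrac14$, and this gives identical constants. Your closing caveat is also well taken: the paper silently uses the single symbol $C_L$ for constants evaluated at two different base points, and it relies on the unstated hypothesis $\frac{3d}{2}\rho^2<\alpha$, which is only made explicit later in Proposition~\ref{prop:assump1_local}.
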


\begin{proof}
Define
\begin{align*}
\be_1 &:= 
G_1'(\bu_h+\bw_h,p_h+s_h;\btheta_h,\xi_h)
-G_1'(\bu_h,p_h;\btheta_h,\xi_h),\\
e_2 &:= 
G_2'(\bu_h+\bw_h,p_h+s_h;\btheta_h,\xi_h)
-G_2'(\bu_h,p_h;\btheta_h,\xi_h).
\end{align*}
Using Definition~\ref{def:Frechet}, we compute the difference
$G'(\bu_h+\bw_h,p_h+s_h;\btheta_h,\xi_h)-G'(\bu_h,p_h;\btheta_h,\xi_h)$:
\begin{align}
&(1+\rho\nu)(\nabla \be_1,\nabla\bv_h)
+\rho\,b^*(\btheta_h,
G_1(\bu_h+\bw_h,p_h+s_h)-G_1(\bu_h,p_h),\bv_h)
\nonumber\\
&\quad
+\rho\,b^*(\bu_h,\be_1,\bv_h)
+\rho\,b^*(\bw_h,
G_1'(\bu_h+\bw_h,p_h+s_h;\btheta_h,\xi_h),\bv_h)
\nonumber\\
&\quad
+\frac{\rho^2}{\alpha}(\nabla \cdot \be_1,\nabla \cdot\bv_h) = 0, \label{denklem2}
\\[4pt]
&\alpha(e_2,q_h)+\rho(\nabla \cdot \be_1,q_h) = 0.
\end{align}
Setting $\bv_h = \be_1$ and $q_h = e_2$ cancels the third term on the left-hand side
of~\eqref{denklem2}. Rearranging and adding the equations, then dropping the positive
term $\frac{\rho^2}{\alpha}\|\nabla \cdot \be_1\|^2$ from the left-hand side, gives
\begin{align*}
(1+\rho\nu)\|\nabla \be_1\|^2+\alpha\|e_2\|^2
\leq&
\rho\left|(\nabla \cdot \be_1,e_2)\right|
\\
&+\rho\left|b^*(\btheta_h,
G_1(\bu_h+\bw_h,p_h+s_h)-G_1(\bu_h,p_h),\be_1)\right|
\\
&+\rho\left|b^*(\bw_h,
G_1'(\bu_h+\bw_h,p_h+s_h;\btheta_h,\xi_h),\be_1)\right|.
\end{align*}

We now bound each term on the right-hand side. Applying the Cauchy--Schwarz inequality,
$\|\nabla\cdot \be_1\|\le \sqrt{d}\,\|\nabla \be_1\|$, and Young's inequality gives
\begin{equation*}
|-\rho(\nabla \cdot \be_1,e_2)|
\leq
\frac{d\rho^2\alpha^{-1}}{2}\|\nabla \be_1\|^2
+\frac{\alpha}{2}\|e_2\|^2.
\end{equation*}
Using the boundedness of the trilinear form~\eqref{eq:b_bound}, Young's inequality, and
Lemma~\ref{lem:lipschitz}:
\begin{align*}
&|-\rho\,b^*(\btheta_h,
G_1(\bu_h+\bw_h,p_h+s_h)-G_1(\bu_h,p_h),\be_1)|
\\
&\quad \leq
\rho^2\mathcal{M}^2 C_L^2\,
\|\nabla\btheta_h\|^2\,\|(\bw_h,s_h)\|_{\mathcal{H}_h}^2
+\frac{1}{4}\|\nabla \be_1\|^2.
\end{align*}
Similarly, using the boundedness of the trilinear form~\eqref{eq:b_bound}, Young's
inequality, and Lemma~\ref{lem:Gprime_welldefined}:
\begin{align*}
&|-\rho\,b^*(\bw_h,
G_1'(\bu_h+\bw_h,p_h+s_h;\btheta_h,\xi_h),\be_1)|
\\
&\quad \leq
\rho^2\mathcal{M}^2 C_L^2\,
\|\nabla\bw_h\|^2\,\|(\btheta_h,\xi_h)\|_{\mathcal{H}_h}^2
+\frac{1}{4}\|\nabla \be_1\|^2.
\end{align*}
Combining all these bounds:
\begin{align*}
&\left(\frac{1}{2}+\rho\nu-\frac{d\rho^2\alpha^{-1}}{2}\right)
\|\nabla \be_1\|^2
+\frac{\alpha}{2}\|e_2\|^2
\\
&\quad \leq
\rho^2\mathcal{M}^2 C_L^2\,
\|\nabla\btheta_h\|^2\,\|(\bw_h,s_h)\|_{\mathcal{H}_h}^2
+
\rho^2\mathcal{M}^2 C_L^2\,
\|\nabla\bw_h\|^2\,\|(\btheta_h,\xi_h)\|_{\mathcal{H}_h}^2.
\end{align*}
Using
\begin{equation}
\begin{aligned}
\|\nabla\btheta_h\|^2
&\leq
\|\nabla\btheta_h\|^2+\alpha\|\xi_h\|^2
 = 
\|(\btheta_h,\xi_h)\|_{\mathcal{H}_h}^2,\\
\|\nabla\bw_h\|^2
&\leq
\|\nabla\bw_h\|^2+\alpha\|s_h\|^2
 = 
\|(\bw_h,s_h)\|_{\mathcal{H}_h}^2,
\end{aligned}
\end{equation}
we obtain
\begin{align}
&\left(\frac{1}{2}+\rho\nu-\frac{d\rho^2\alpha^{-1}}{2}\right)
\|\nabla \be_1\|^2
+\frac{\alpha}{2}\|e_2\|^2
\nonumber\\
&\quad \leq
2\rho^2\mathcal{M}^2 C_L^2\,
\|(\btheta_h,\xi_h)\|_{\mathcal{H}_h}^2\,
\|(\bw_h,s_h)\|_{\mathcal{H}_h}^2.
\end{align}
Taking $K'$ as in Lemma~\ref{lem:Gprime_is_Frechet}, dividing both sides by $K'$, and
taking square roots on both sides yields the result.
\end{proof}

\begin{proposition}[Local verification of Assumption~\ref{assump:1}]\label{prop:assump1_local}
Let $\mathcal U\subset \mathcal H_h$ be a bounded neighborhood of a fixed point $x_h^*$. Under the parameter restrictions
\[
\frac{3d}{2}\rho^2<\alpha,
\qquad
\frac{d\rho^2}{1+2\rho\nu}<\alpha,
\]
there exist constants $C_0(\mathcal U)>0$ and $C_1(\mathcal U)>0$ such that for all $x,\widetilde{x}\in \mathcal U$ and all $y\in \mathcal H_h$,
\[
\|G'(x)y\|_{\mathcal H_h}\le C_0(\mathcal U)\|y\|_{\mathcal H_h},
\]
and
\[
\|G'(x)y-G'(\widetilde{x})y\|_{\mathcal H_h}
\le C_1(\mathcal U)\|x-\widetilde{x}\|_{\mathcal H_h}\|y\|_{\mathcal H_h}.
\]
Thus Assumption~\ref{assump:1} holds locally on $\mathcal U$.
\end{proposition}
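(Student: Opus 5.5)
The plan is to assemble the proposition directly from Lemmas~\ref{lem:Gprime_welldefined}, \ref{lem:Gprime_is_Frechet} and \ref{lem:Gprime_lipschitz}, making the input-dependent constants uniform over the bounded set $\mathcal U$. First, since $\mathcal U$ is bounded, set $R(\mathcal U):=\sup_{x\in\mathcal U}\|x\|_{\mathcal H_h}<\infty$. Note that $C_L$ as written in Lemma~\ref{lem:lipschitz} depends on the base point only through $\|(\bw_h,z_h)\|_{\mathcal H_h}$ and is nondecreasing in it. Define
\[
\overline{C}_L(\mathcal U):=\sqrt{2}\max\left\{\sqrt{\frac{2}{K}},\frac{1}{\sqrt{K}}\left(1+\rho\mathcal M\left(R(\mathcal U)+\sqrt{\frac{\rho}{\nu}}\|\bff\|_{-1}\right)\right)\right\}.
\]
The restriction $\frac{3d}{2}\rho^2<\alpha$ guarantees $K>0$, and $\frac{d\rho^2}{1+2\rho\nu}<\alpha$ guarantees $K'>0$. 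Hence $\overline{C}_L(\mathcal U)$ and $\sqrt{2/K'}$ are finite positive numbers.

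For the first bound, take $x=(\bu_h,p_h)\in\mathcal U$ and $y=(\bw_h,s_h)\in\mathcal H_h$ arbitrary. Lemma~\ref{lem:Gprime_welldefined} gives $\|G'(x)y\|_{\mathcal H_h}\le C_L\|y\|_{\mathcal H_h}$. Here the constant involves the base point $x$, through the bound on $G_1(x)$ from Lemma~\ref{lem:G_welldefined}, and not $y$. This is the one point to check carefully, since the lemma's statement reuses the symbol $C_L$ from Lemma~\ref{lem:lipschitz}. Inspecting its proof, the trilinear term is bounded using $\|\nabla G_1(\bu_h,p_h)\|\le\|x\|_{\mathcal H_h}+\sqrt{\rho/\nu}\|\bff\|_{-1}$, so the constant is $C_L$ evaluated at $x$. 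It is therefore bounded by $\overline{C}_L(\mathcal U)$, and we may take $C_0(\mathcal U):=\overline{C}_L(\mathcal U)$. We should also confirm that Lemma~\ref{lem:Gprime_is_Frechet} identifies this $G'$ as the Fréchet derivative, so that the $G'(x)$ in Assumption~\ref{assump:1} is indeed this operator. The quadratic remainder bound there gives differentiability. Continuity of $x\mapsto G'(x)$, which follows from the next step, then gives $G\in C^1$.

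For the second bound, given $x,\widetilde x\in\mathcal U$, write $\widetilde x=x+(\bw_h,s_h)$ with $(\bw_h,s_h)=\widetilde x-x$, and apply Lemma~\ref{lem:Gprime_lipschitz} with direction $y=(\btheta_h,\xi_h)$. This yields
\[
\|G'(\widetilde x)y-G'(x)y\|_{\mathcal H_h}\le\sqrt{\frac{2}{K'}}\,\rho\mathcal M C_L\|x-\widetilde x\|_{\mathcal H_h}\|y\|_{\mathcal H_h}.
\]
The constant $C_L$ in this estimate enters through two places: Lemma~\ref{lem:lipschitz}, applied to $G_1(\widetilde x)-G_1(x)$ with base point $x$, and Lemma~\ref{lem:Gprime_welldefined}, applied at the base point $\widetilde x$. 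Both base points lie in $\mathcal U$, so both constants are bounded by $\overline{C}_L(\mathcal U)$. We then set $C_1(\mathcal U):=\sqrt{2/K'}\,\rho\mathcal M\,\overline{C}_L(\mathcal U)$.

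The only real obstacle is this bookkeeping: showing that every occurrence of $C_L$ depends on base points lying in $\mathcal U$, and never on the increment or direction. Once that is verified, the supremum over $\mathcal U$ can be taken, and the constants are uniform as claimed.
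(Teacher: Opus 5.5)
Your proposal is correct and follows the paper's own argument: take suprema over the bounded set $\mathcal U$ of the base-point-dependent constants in Lemmas~\ref{lem:Gprime_welldefined} and~\ref{lem:Gprime_lipschitz}. Your bookkeeping adds three explicit checks that the paper compresses into one line, and they make it precise: $C_L$ is monotone in the base-point norm, both occurrences of $C_L$ in Lemma~\ref{lem:Gprime_lipschitz} are evaluated at $x$ or $\widetilde x$ and never at the increment, and Lemma~\ref{lem:Gprime_is_Frechet} together with local Lipschitz continuity of $G'$ gives $G\in C^1$.
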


\begin{proof}
The bounds in Lemmas~\ref{lem:Gprime_welldefined} and~\ref{lem:Gprime_lipschitz} contain constants depending on the base point through bounded $\mathcal H_h$-norms of the input variables. Since $\mathcal U$ is bounded and $\mathcal H_h$ is finite dimensional, these quantities are uniformly bounded on $\mathcal U$. Taking $C_0(\mathcal U)$ and $C_1(\mathcal U)$ to be the corresponding suprema gives the stated local bounds.
\end{proof}

\begin{remark}
The restrictions in Proposition~\ref{prop:assump1_local} are sufficient
conditions used to close the analytical estimates. They arise from conservative
applications of Cauchy--Schwarz and Young inequalities and are not intended as
sharp practical parameter-selection rules. Consequently, the numerical tests
below do not enforce these bounds. Instead, they use parameter values adopted
from the IAH literature and selected for computational performance. The
resulting experiments should therefore be interpreted as evidence that the
AA-IAH iteration is effective in regimes beyond those covered by the sufficient
local theory.
\end{remark}

\subsection{Local Verification of Assumption~\ref{assump:2}}

In the preceding sections, we examined the well-definedness, Lipschitz continuity,
Fréchet differentiability, and Lipschitz continuity of $G'$ for the fixed-point operator
\[
G:\mathcal{H}_h\to \mathcal{H}_h
\]
associated with the IAH scheme. Together with Proposition~\ref{prop:assump1_local}, these results establish that the smoothness conditions
required by the Pollock--Rebholz theory are satisfied locally for the IAH operator.

The second fundamental condition in the Pollock--Rebholz theory requires that the
differences of consecutive residuals be bounded below by the differences of consecutive
iterates. In our notation, with $x_k = (\bfu_h^k,p_h^k)\in \mathcal{H}_h$, the residual is
defined as
\[
r_{k+1} = G(x_k)-x_k.
\]
Assumption~\ref{assump:2} then requires that for some $\sigma>0$,
\[
\|r_{k+1}-r_k\|_{\mathcal{H}_h} \ge \sigma\|x_k-x_{k-1}\|_{\mathcal{H}_h}.
\]
As noted in~\cite{contractive_noncontractive}, this condition is automatically satisfied
when $G$ is contractive. The contraction property of the IAH scheme under the small data
condition and specific parameter choices was established in~\cite{IAH_for_NSE} (Theorem~3).
However, contractivity is not the only path to this condition. The fixed-point problem
$G(x) = x$ can be recast as a zero-finding problem $\mathcal{R}(x) = 0$ via the residual
operator $\mathcal{R}(x) := G(x)-x$. As noted in Remark~2.1 of~\cite{contractive_noncontractive},
Assumption~\ref{assump:2} is satisfied whenever the derivative of the residual operator is
bounded below on the relevant space, and the same remark states that this condition can be
localized to a neighborhood of the solution. Since we work in a local regime around the
solution, it suffices to work with the residual operator to verify
Assumption~\ref{assump:2} locally.

Following the discussion above, the key step is first to show that $\mathcal{R}'(x_h^*)$
is bounded below with respect to the $\mathcal{H}_h$-norm, and then to transfer this lower
bound to a sufficiently small neighborhood of $x_h^*$. To this end, we first show that
in the finite-dimensional discrete space,
\[
\ker \mathcal{R}'(x_h^*) = \{(0,0)\}.
\]
In a finite-dimensional space, once this property is established, it implies the lower bound in
Corollary~\ref{cor:residual_lower_bound} below. We then transfer this lower bound to a sufficiently
small neighborhood of $x_h^*$ to verify Assumption~\ref{assump:2} locally.

The following lemma establishes that the kernel of the residual derivative is trivial.

\begin{lemma}\label{lem:Residual_kernel}
Let $x_h^* = (\bfu_h^*,p_h^*)\in \mathcal{H}_h$ be a fixed point satisfying
$G(x_h^*) = x_h^*$. Suppose the small data condition~\eqref{small_data_condition}
and the discrete inf--sup condition~\eqref{eq:inf_sup} hold. Then
\[
\ker \mathcal{R}'(x_h^*) = \{(0,0)\}.
\]
\end{lemma}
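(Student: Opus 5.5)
The plan is to unfold the definition of the residual derivative at the fixed point and show that its kernel equation is exactly a linearized, grad-div augmented Oseen-type problem. That problem forces the velocity component to vanish under the small data condition~\eqref{small_data_condition}; the pressure component then vanishes by the inf--sup condition~\eqref{eq:inf_sup}.

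\textbf{Reducing to a fixed-point equation for $G'$.} Since $\mathcal R(x)=G(x)-x$, we have $\mathcal R'(x_h^*)y=G'(x_h^*)y-y$. So $y=(\bw_h,s_h)\in\ker\mathcal R'(x_h^*)$ means $G_1'(\bu_h^*,p_h^*;\bw_h,s_h)=\bw_h$ and $G_2'(\bu_h^*,p_h^*;\bw_h,s_h)=s_h$. I will substitute these into~\eqref{eq:Gprime_momentum}--\eqref{eq:Gprime_mass}, using also $G_1(\bu_h^*,p_h^*)=\bu_h^*$ because $x_h^*$ is a fixed point.

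\textbf{The resulting kernel system.} In the momentum equation the term $(\nabla(G_1'-\bw_h),\nabla\bv_h)$ vanishes. After dividing by $\rho$ we get, for all $\bv_h\in\bX_h$,
\[
\nu(\nabla\bw_h,\nabla\bv_h)+b^*(\bu_h^*,\bw_h,\bv_h)+b^*(\bw_h,\bu_h^*,\bv_h)+\frac{\rho}{\alpha}(\nabla\cdot\bw_h,\nabla\cdot\bv_h)-(s_h,\nabla\cdot\bv_h)=0 .
\]
In the mass equation the term $\alpha(G_2'-s_h,q_h)$ vanishes, leaving $\rho(\nabla\cdot\bw_h,q_h)=0$ for all $q_h\in Q_h$.

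\textbf{Velocity component.} I will test the momentum equation with $\bv_h=\bw_h$, which gives three simplifications:
\begin{itemize}
\item $b^*(\bu_h^*,\bw_h,\bw_h)=0$ by skew-symmetry;
\item $(s_h,\nabla\cdot\bw_h)=0$ by the discrete mass equation with $q_h=s_h$;
\item the grad-div term $\frac{\rho}{\alpha}\|\nabla\cdot\bw_h\|^2$ is nonnegative and can be dropped.
\end{itemize}
This yields
\[
\nu\|\nabla\bw_h\|^2\le |b^*(\bw_h,\bu_h^*,\bw_h)|\le \mathcal M\|\nabla\bu_h^*\|\,\|\nabla\bw_h\|^2 .
\]
I then use the stability bound $\|\nabla\bu_h^*\|\le\nu^{-1}\|\bff\|_{-1}$ for fixed points of $G$ from Remark~\ref{rem:fixed_point_augmented}, together with~\eqref{eq:b_bound}. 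This gives $\nu(1-\Lambda)\|\nabla\bw_h\|^2\le0$. Since $\Lambda<1$, it follows that $\nabla\bw_h=0$, and hence $\bw_h=0$ because $\bX_h\subset[H_0^1(\Omega)]^d$.

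\textbf{Pressure component.} With $\bw_h=0$ the momentum equation reduces to $(s_h,\nabla\cdot\bv_h)=0$ for all $\bv_h\in\bX_h$. The inf--sup condition~\eqref{eq:inf_sup} then gives $\beta\|s_h\|\le\sup_{\bv_h}(s_h,\nabla\cdot\bv_h)/\|\nabla\bv_h\|=0$, so $s_h=0$.

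The only delicate point is the velocity step. There one must use that the fixed point is bounded by the data, so that the small data condition absorbs the non-skew linearized term $b^*(\bw_h,\bu_h^*,\bw_h)$. Everything else is bookkeeping with the definition of $G'$.
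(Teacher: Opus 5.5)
Your proposal is correct and follows essentially the same route as the paper. You substitute $G'(x_h^*)y=y$ and $G_1(x_h^*)=\bfu_h^*$ into the definition of $G'$, test with the velocity, use skew-symmetry, the discrete divergence constraint, the fixed-point stability bound and $\Lambda<1$ to kill the velocity, and then apply the inf--sup condition to the pressure; the only cosmetic difference is that you drop the nonnegative grad-div term earlier, whereas the paper keeps it in $\nu(1-\Lambda)\|\nabla\bphi_h\|^2+\frac{\rho}{\alpha}\|\nabla\cdot\bphi_h\|^2\le 0$.
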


\begin{proof}
Since $\mathcal{R}(x) = G(x)-x$ for $x\in \mathcal{H}_h$, we have
$\mathcal{R}'(x_h^*) = G'(x_h^*)-I$.
Suppose $\bphi_h\in \bX_h$ and $\psi_h\in Q_h$ satisfy
\[
\mathcal{R}'(x_h^*)(\bphi_h,\psi_h) = (0,0).
\]
Our goal is to show that $(\bphi_h,\psi_h) = (0,0)$.

Write $G'(x_h^*)(\bphi_h,\psi_h) = (\bfeta_h,\xi_h)$. Then
\[
\mathcal{R}'(x_h^*)(\bphi_h,\psi_h)
 = G'(x_h^*)(\bphi_h,\psi_h)-(\bphi_h,\psi_h)
 = (\bfeta_h-\bphi_h,\,\xi_h-\psi_h),
\]
so the assumption $\mathcal{R}'(x_h^*)(\bphi_h,\psi_h) = (0,0)$ yields
\[
\bfeta_h = \bphi_h, \qquad \xi_h = \psi_h.
\]

By Definition~\ref{def:Frechet} of $G'$, for all $(\bv_h,q_h)\in \bX_h\times Q_h$,
\[
\begin{aligned}
&(\nabla(\bfeta_h-\bphi_h),\nabla \bv_h)
+\rho\nu(\nabla \bfeta_h,\nabla \bv_h)
+\rho\, b^*(\bfu_h^*,\bfeta_h,\bv_h) \\
&\quad
+\rho\, b^*(\bphi_h,G_1(x_h^*),\bv_h)
+\frac{\rho^2}{\alpha}(\nabla\cdot\bfeta_h,\nabla\cdot \bv_h)
-\rho(\psi_h,\nabla\cdot \bv_h) = 0
\end{aligned}
\]
and
\[
\alpha(\xi_h-\psi_h,q_h)+\rho(\nabla\cdot\bfeta_h,q_h) = 0.
\]
Since $x_h^*$ is a fixed point, $G(x_h^*) = x_h^*$ and hence $G_1(x_h^*) = \bfu_h^*$.
Substituting $\bfeta_h = \bphi_h$ and $\xi_h = \psi_h$, the system reduces to
\begin{equation}\label{eq:rezidu_cekirdek_1}
\begin{aligned}
&\rho\nu(\nabla \bphi_h,\nabla \bv_h)
+\rho\, b^*(\bfu_h^*,\bphi_h,\bv_h)
+\rho\, b^*(\bphi_h,\bfu_h^*,\bv_h) \\
&\quad
+\frac{\rho^2}{\alpha}(\nabla\cdot\bphi_h,\nabla\cdot \bv_h)
-\rho(\psi_h,\nabla\cdot \bv_h) = 0
\end{aligned}
\end{equation}
and
\[
\rho(\nabla\cdot\bphi_h,q_h) = 0.
\]
Dividing~\eqref{eq:rezidu_cekirdek_1} by $\rho>0$ gives
\begin{equation}\label{denklemA}
\nu(\nabla \bphi_h,\nabla \bv_h)
+b^*(\bfu_h^*,\bphi_h,\bv_h)
+b^*(\bphi_h,\bfu_h^*,\bv_h)
+\frac{\rho}{\alpha}(\nabla\cdot\bphi_h,\nabla\cdot \bv_h)
-(\psi_h,\nabla\cdot \bv_h) = 0,
\end{equation}
and we also have
\begin{equation}\label{denklemB}
(\nabla\cdot\bphi_h,q_h) = 0 \qquad \forall q_h\in Q_h.
\end{equation}
Setting $\bv_h = \bphi_h$ in~\eqref{denklemA} and $q_h = \psi_h$ in~\eqref{denklemB} yields
$(\nabla\cdot\bphi_h,\psi_h) = 0$ and
\[
\nu\|\nabla\bphi_h\|^2
+b^*(\bfu_h^*,\bphi_h,\bphi_h)
+b^*(\bphi_h,\bfu_h^*,\bphi_h)
+\frac{\rho}{\alpha}\|\nabla\cdot\bphi_h\|^2
-(\psi_h,\nabla\cdot\bphi_h) = 0.
\]
By the skew-symmetry of $b^*$, we have $b^*(\bfu_h^*,\bphi_h,\bphi_h) = 0$, and since
$(\nabla\cdot\bphi_h,\psi_h) = 0$, we obtain
\[
\nu\|\nabla\bphi_h\|^2
+\frac{\rho}{\alpha}\|\nabla\cdot\bphi_h\|^2
+b^*(\bphi_h,\bfu_h^*,\bphi_h) = 0,
\]
which gives
\[
\nu\|\nabla\bphi_h\|^2
+\frac{\rho}{\alpha}\|\nabla\cdot\bphi_h\|^2
\le
|b^*(\bphi_h,\bfu_h^*,\bphi_h)|.
\]
Applying the trilinear form bound~\eqref{eq:b_bound},
\[
|b^*(\bphi_h,\bfu_h^*,\bphi_h)|
\le
\mathcal{M}\|\nabla\bphi_h\|\,\|\nabla \bfu_h^*\|\,\|\nabla\bphi_h\|
 = 
\mathcal{M}\|\nabla \bfu_h^*\|\,\|\nabla\bphi_h\|^2.
\]
Using the fixed-point stability bound from Remark~\ref{rem:fixed_point_augmented} and the small data
condition~\eqref{small_data_condition},
\[
\mathcal{M}\|\nabla \bfu_h^*\|
\le
\mathcal{M}\nu^{-1}\|\bff\|_{-1}
 = \Lambda\nu,
\]
so that
\[
|b^*(\bphi_h,\bfu_h^*,\bphi_h)| \le \Lambda\nu\|\nabla\bphi_h\|^2.
\]
Substituting back,
\[
\nu(1-\Lambda)\|\nabla\bphi_h\|^2
+\frac{\rho}{\alpha}\|\nabla\cdot\bphi_h\|^2
\le 0.
\]
Since $\Lambda<1$ by the small data condition and $\rho,\alpha>0$, this inequality holds
only if $\|\nabla\bphi_h\| = 0$. Since $\bX_h\subset H_0^1(\Omega)^d$, we conclude
$\bphi_h = 0$.

Substituting $\bphi_h = 0$ into~\eqref{denklemA} gives
$(\psi_h,\nabla\cdot \bv_h) = 0$ for all $\bv_h\in \bX_h$.
By the discrete inf--sup condition~\eqref{eq:inf_sup}, we conclude $\psi_h = 0$.
Therefore,
\[
\mathcal{R}'(x_h^*)(\bphi_h,\psi_h) = (0,0)
\;\Longrightarrow\;
(\bphi_h,\psi_h) = (0,0),
\]
i.e., $\ker \mathcal{R}'(x_h^*) = \{(0,0)\}$.
\end{proof}

\begin{corollary}\label{cor:residual_lower_bound}
By Lemma~\ref{lem:Residual_kernel}, the kernel of $\mathcal{R}'(x_h^*)$ contains only the
zero element. Define
\[
\sigma_*
:= 
\min_{\|z_h\|_{\mathcal{H}_h} = 1}
\|\mathcal{R}'(x_h^*)z_h\|_{\mathcal{H}_h}.
\]
Since the unit $\mathcal{H}_h$-sphere is compact in the finite-dimensional space
$\mathcal{H}_h$, this minimum is attained. If $\sigma_* = 0$, there would exist
$z_0\in \mathcal{H}_h$ with $\|z_0\|_{\mathcal{H}_h} = 1$ such that 
$\mathcal{R}'(x_h^*)z_0 = 0$, contradicting $\ker\mathcal{R}'(x_h^*) = \{(0,0)\}$.
Hence $\sigma_*>0$.

For any $z_h\ne 0$, set $\widehat{z}_h = z_h/\|z_h\|_{\mathcal{H}_h}$, so $\|\widehat{z}_h\|_{\mathcal{H}_h} = 1$ and
by definition of $\sigma_*$,
\[
\|\mathcal{R}'(x_h^*)\widehat{z}_h\|_{\mathcal{H}_h}\ge \sigma_*.
\]
By linearity of $\mathcal{R}'(x_h^*)$,
\[
\frac{\|\mathcal{R}'(x_h^*)z_h\|_{\mathcal{H}_h}}{\|z_h\|_{\mathcal{H}_h}}\ge \sigma_*,
\]
and therefore
\begin{equation}\label{eq:residual_lower_bound}
\|\mathcal{R}'(x_h^*)z_h\|_{\mathcal{H}_h} \ge \sigma_*\|z_h\|_{\mathcal{H}_h}
\qquad \forall z_h\in \mathcal{H}_h.
\end{equation}
\end{corollary}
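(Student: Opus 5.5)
The corollary follows from a compactness argument in the finite-dimensional space $\mathcal{H}_h$, combined with the trivial kernel from Lemma~\ref{lem:Residual_kernel}. The first step is to check that $\mathcal{R}'(x_h^*) = G'(x_h^*) - I$ is a linear map on $\mathcal{H}_h$. This holds because the defining system~\eqref{eq:Gprime_momentum}--\eqref{eq:Gprime_mass} is linear in $(\bw_h,s_h)$, and Lemma~\ref{lem:Gprime_welldefined} shows it is well-defined. By Lemma~\ref{lem:Gprime_welldefined} it is also bounded, with $\|\mathcal{R}'(x_h^*)z\|_{\mathcal{H}_h} \le (C_L+1)\|z\|_{\mathcal{H}_h}$. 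Hence $z\mapsto \|\mathcal{R}'(x_h^*)z\|_{\mathcal{H}_h}$ is continuous on $\mathcal{H}_h$; in finite dimensions this is automatic in any case.

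Next, I will use that the unit sphere $S=\{z\in\mathcal{H}_h:\|z\|_{\mathcal{H}_h}=1\}$ is closed and bounded in a finite-dimensional normed space, and therefore compact by Heine--Borel. Note that $\|\cdot\|_{\mathcal{H}_h}$ is a genuine norm: $\|\nabla\cdot\|$ is a norm on $\bX_h$ by Poincaré, and $\alpha>0$. A continuous function on a compact set attains its minimum, so $\sigma_*$ is well-defined and equals $\|\mathcal{R}'(x_h^*)z_0\|_{\mathcal{H}_h}$ for some $z_0\in S$.

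I then argue positivity by contradiction. If $\sigma_*=0$, then $\mathcal{R}'(x_h^*)z_0=0$ with $z_0\neq 0$, which contradicts Lemma~\ref{lem:Residual_kernel}. This step needs the hypotheses of that lemma, namely the small data condition~\eqref{small_data_condition} and the inf--sup condition~\eqref{eq:inf_sup}. I will state them implicitly as standing assumptions.

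Finally, the estimate~\eqref{eq:residual_lower_bound} follows by scaling. For $z_h=0$ it holds trivially. For $z_h\neq0$, apply the minimum to $z_h/\|z_h\|_{\mathcal{H}_h}$ and use homogeneity of the linear map and the norm. The only real point of care is the attainment and positivity of the minimum, and compactness in finite dimensions settles it. No step seems to be a genuine obstacle, apart from noting that $\sigma_*$ may depend on $h$, since no mesh-uniform bound is claimed.
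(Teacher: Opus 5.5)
Your proposal is correct and follows the paper's own argument essentially step for step. The minimum $\sigma_*$ is attained by compactness of the unit $\mathcal{H}_h$-sphere, it is positive by contradiction with the trivial kernel from Lemma~\ref{lem:Residual_kernel}, and~\eqref{eq:residual_lower_bound} then follows by homogeneity, with the case $z_h=0$, which the paper leaves implicit, handled trivially. One small point: you do not need the restriction $\frac{3d}{2}\rho^2<\alpha$ of Lemma~\ref{lem:Gprime_welldefined} to get that $G'$ is well defined. The coercivity argument of Lemma~\ref{lem:G_welldefined} applies directly to the linear system defining $G'$, and continuity is automatic in finite dimensions.
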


The lower bound in Corollary~\ref{cor:residual_lower_bound} transfers to a sufficiently small
neighborhood of the fixed point. Since $\mathcal{R}' = G'-I$, the Lipschitz continuity of $G'$ in the
$\mathcal{H}_h$-norm carries over to $\mathcal{R}'$. We denote the corresponding local Lipschitz
constant by $L_{R'}$. Choose a convex neighborhood $\mathcal{U}$ of $x_h^*$ such that
$\|z-x_h^*\|_{\mathcal{H}_h}\le \varepsilon$ for all $z\in\mathcal U$. For $x,y\in\mathcal U$, set
$\delta=x-y$. Then
\[
\mathcal R(x)-\mathcal R(y)
=
\mathcal R'(x_h^*)\delta
+
\int_0^1
\left[\mathcal R'(y+t\delta)-\mathcal R'(x_h^*)\right]\delta\,dt.
\]
Therefore, by the reverse triangle inequality, \eqref{eq:residual_lower_bound}, and the local
Lipschitz continuity of $\mathcal R'$,
\[
\begin{aligned}
\|\mathcal R(x)-\mathcal R(y)\|_{\mathcal H_h}
&\ge
\|\mathcal R'(x_h^*)\delta\|_{\mathcal H_h}
-
\int_0^1
\|[\mathcal R'(y+t\delta)-\mathcal R'(x_h^*)]\delta\|_{\mathcal H_h}\,dt \\
&\ge
\sigma_*\|\delta\|_{\mathcal H_h}
-
L_{R'}\varepsilon\|\delta\|_{\mathcal H_h}.
\end{aligned}
\]
Choosing $\varepsilon$ so that $L_{R'}\varepsilon\le \sigma_*/2$ gives
\[
\|\mathcal R(x)-\mathcal R(y)\|_{\mathcal H_h}
\ge
\frac{\sigma_*}{2}\|x-y\|_{\mathcal H_h}
\qquad \forall x,y\in\mathcal U.
\]
Since $r_{k+1}=\mathcal R(x_k)$, Assumption~\ref{assump:2} is locally satisfied with
$\sigma=\sigma_*/2$ as long as the relevant iterates remain in this neighborhood.

\subsection{Residual Analysis of the AA-IAH Algorithm for Steady-State NSE}

The preceding lemmas establish that the solution operator $G$ associated with the IAH iteration~\eqref{eq:G_momentum}--\eqref{eq:G_mass} satisfies the conditions required by Theorem~\ref{theorem:pollock} locally near the fixed point. Specifically, Proposition~\ref{prop:assump1_local}
verifies the boundedness and Lipschitz-continuity conditions of
Assumption~\ref{assump:1}. Assumption~\ref{assump:2} is verified
locally in a neighborhood of the fixed point via Lemma~\ref{lem:Residual_kernel},
Corollary~\ref{cor:residual_lower_bound}, and the local lower-bound argument immediately above.
With both assumptions in place on this local neighborhood,
Theorem~\ref{theorem:pollock} applies to AA-IAH iterates that remain in the neighborhood and yields the
following one-step residual bound.

\begin{corollary}\label{cor:residual_bound}
Let Assumptions~\ref{assump:1} and~\ref{assump:2} hold locally for the operator $G$ associated
with the IAH iteration~\eqref{eq:G_momentum}--\eqref{eq:G_mass}, and assume that the relevant AA-IAH iterates remain in this local neighborhood. Then for any step
$k > m$, the residual $r_{k+1}$ of the AA-IAH iteration satisfies
\begin{equation}\label{eq:residual_bound}
\begin{aligned}
\|r_{k+1}\|_{\mathcal{H}_h}
\leq\; & \theta_k C_L \|r_k\|_{\mathcal{H}_h} \\
&+ C\sqrt{1-\theta_k^2}\,\|r_k\|_{\mathcal{H}_h}
  \sum_{j = 1}^{m} \|r_{k-j+1}\|_{\mathcal{H}_h},
\end{aligned}
\end{equation}
where $\theta_k \in [0,1]$ denotes the gain factor obtained from the Anderson
optimization step, $C_L$ denotes the local derivative-bound constant $C_0(\mathcal U)$
from Proposition~\ref{prop:assump1_local}, and $C$ is a generic constant depending on $C_1$, the Anderson depth $m$, $c_s$, the direction-cosine bounds, and the bounds on $h(\theta_j)$.
\end{corollary}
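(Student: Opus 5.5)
The plan is to obtain Corollary~\ref{cor:residual_bound} as a direct specialization of Theorem~\ref{theorem:pollock}, with the operator-dependent constants identified through Proposition~\ref{prop:assump1_local} and the local argument that follows Corollary~\ref{cor:residual_lower_bound}.

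First, we fix the neighborhood $\mathcal U$ of $x_h^*$. We take it small enough that both of the following hold on it:
\begin{itemize}
\item Assumption~\ref{assump:1} holds with constants $C_0(\mathcal U)$ and $C_1(\mathcal U)$, by Proposition~\ref{prop:assump1_local}.
\item Assumption~\ref{assump:2} holds with $\sigma=\sigma_*/2$, by the localization argument.
\end{itemize}
Since the relevant AA-IAH iterates are assumed to stay in $\mathcal U$, Theorem~\ref{theorem:pollock} applies at step $k$. This requires the direction-sine hypothesis with constant $c_s$, which we take as part of the standing assumptions, since the constant $C$ in the statement depends on it. We then write down \eqref{eq:theorem_bound} and replace $C_0$ by $C_0(\mathcal U)$. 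This is exactly the constant the statement calls $C_L$, so the first term becomes $\theta_k C_L\|r_k\|_{\mathcal H_h}$.

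Second, we simplify the higher-order bracket. For $k>m$ we have $m_k=m$, so the indices run over $n=k-m,\dots,k$. Each factor $h(\theta_j)\le C\sqrt{1-\theta_j^2}+\theta_j\le C+1$, because $\theta_j\in[0,1]$ (note $\theta_j\le1$ by taking $\alpha_k^{k+1}=1$ in the minimization). The weights $(k-n)$ are at most $m$ and $m_k=m$. Hence the bracket is bounded by $c(m,C)\sum_{n=k-m}^{k}\|r_n\|_{\mathcal H_h}$. Multiplying by $CC_1\sqrt{1-\theta_k^2}/2$ then gives the second term, after absorbing $C_1$, $m$, $c_s$ and the $h$-bounds into a generic $C$.

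The main obstacle is the index range. Pollock--Rebholz produces $m+1$ residuals, $r_{k-m},\dots,r_k$, whereas the stated sum $\sum_{j=1}^m\|r_{k-j+1}\|_{\mathcal H_h}$ covers only $r_{k-m+1},\dots,r_k$. The oldest term $m\|r_{k-m}\|h(\theta_{k-m})$ must therefore be handled separately. I would first try to absorb it by noting that its coefficient structure matches the $n=k-m+1$ entry of the weighted sum, and reinterpreting the generic constant accordingly. If that fails, I would use the local Lipschitz bound on $\mathcal R$ together with Assumption~\ref{assump:2} to compare $\|r_{k-m}\|$ with neighboring residuals. As a last resort, I would read the summation as including $j=m+1$, which is the form the theorem itself gives and is equivalent up to the generic constant. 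Either way the estimate follows from Theorem~\ref{theorem:pollock} with no new analysis of $G$ beyond the earlier lemmas.
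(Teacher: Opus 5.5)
Your route is the paper's route: its proof is a one-line specialization of Theorem~\ref{theorem:pollock}. It replaces $C_0$ by $C_0(\mathcal U)$ and absorbs $C_1$, $m$, the bounds on $h(\theta_j)$ and the geometry constants into a generic $C$. The gap is the one you flag yourself, and neither of your first two remedies closes it. For $k>m$ the bracket in~\eqref{eq:theorem_bound} contains $m\|r_{k-m}\|_{\mathcal H_h}h(\theta_{k-m})$, whereas $\sum_{j=1}^{m}\|r_{k-j+1}\|_{\mathcal H_h}$ involves only $r_{k-m+1},\dots,r_k$.

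A generic constant can absorb multiplicative factors, but not the norm of a different vector. Matching the ``coefficient structure'' of the $n=k-m+1$ entry therefore gives nothing unless you prove $\|r_{k-m}\|_{\mathcal H_h}\le c\,\|r_{k-m+1}\|_{\mathcal H_h}$, or some bound by later residuals. Your second remedy cannot supply this, for two reasons.
\begin{itemize}
\item Assumption~\ref{assump:2} bounds $\|x_{k-m}-x_{k-m-1}\|_{\mathcal H_h}$ by $\sigma^{-1}\|r_{k-m+1}-r_{k-m}\|_{\mathcal H_h}$. That contains $r_{k-m}$ again, so the argument is circular.
\item Combined with a local Lipschitz constant $L$ of $\mathcal R$, it only gives $\tfrac{\sigma_*}{2}\|x_{j-1}-x_h^*\|_{\mathcal H_h}\le\|r_j\|_{\mathcal H_h}\le L\|x_{j-1}-x_h^*\|_{\mathcal H_h}$. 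Bounding $\|r_{k-m}\|_{\mathcal H_h}$ by later residuals would then need $\|x_{k-m-1}-x_h^*\|_{\mathcal H_h}\lesssim\|x_{k-m}-x_h^*\|_{\mathcal H_h}$. That is a lower bound on how little one Anderson step can reduce the error, and no such bound exists; a step with $\theta$ near $0$ may reduce it by an arbitrary factor.
\end{itemize}
Bounding $\|r_{k-m}\|_{\mathcal H_h}$ by the size of $\mathcal U$ does not help either. It turns the term into a first-order contribution $C\sqrt{1-\theta_k^2}\,\|r_k\|_{\mathcal H_h}$, which changes the form of the estimate.

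What your argument actually proves is the bound with $\sum_{n=k-m}^{k}\|r_n\|_{\mathcal H_h}$, i.e.\ with $j$ running to $m+1$. That is your third option, but it is not ``equivalent up to the generic constant.'' It is a weaker estimate than the one stated, and you should present it as the corrected form of the corollary rather than as a reading of it.

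The paper's proof does not resolve this point either. It simply declares all remaining factors absorbed into $C$, which silently drops the oldest residual. The rest of your proposal is correct and somewhat more careful than the paper:
\begin{itemize}
\item $\theta_j\le 1$ via the choice $\alpha_k^{k+1}=1$;
\item $h(\theta_j)\le C+1$;
\item the weights satisfy $k-n\le m$;
\item the direction-sine hypothesis with constant $c_s$ is made an explicit standing assumption;
\item $\mathcal U$ is chosen small enough for both assumptions.
\end{itemize}
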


\begin{proof}
The result follows by applying Theorem~\ref{theorem:pollock} on the local neighborhood where Assumptions~\ref{assump:1}--\ref{assump:2} hold, replacing $C_0$ by the local derivative-bound constant denoted by $C_L$, and absorbing the bounded factors involving $C_1$, $m$, $h(\theta_j)$, and the optimization-matrix geometry into the generic constant $C$.
\end{proof}

\section{Numerical Experiments}
\label{sec:numerical_experiments}

In this section, a series of numerical experiments is presented to evaluate the accuracy, stability, and computational efficiency of the proposed Anderson-accelerated improved Arrow--Hurwicz algorithm in comparison with the improved Arrow--Hurwicz method. The numerical tests are organized into three main stages:
\begin{itemize}
    \item \textbf{Accuracy Analysis:} Designed to mathematically demonstrate that the algorithm is correctly implemented and achieves the expected theoretical error convergence rates for the spatial discretization.
    \item \textbf{Lid-Driven Cavity Flow:} Investigated to test the performance, stability, and acceleration capacity of the algorithm, particularly at high Reynolds numbers, by comparing the results with well-known benchmark data in the literature.
    \item \textbf{Channel Flow Over a Full Step:} Considered to demonstrate the capability of the proposed method in modeling complex physical flow structures, such as flow separation and recirculation zones.
\end{itemize}

In all numerical simulations, biquadratic ($Q_2$) and bilinear ($Q_1$) continuous polynomials are used for the velocity and pressure fields, respectively. The implementation of the algorithms and the solution of the linear systems are carried out in C++ using version 9.8.0-pre (commit id: 82c56fc0ed) of the open-source finite element library \texttt{deal.II}~\cite{2025:arndt.bangerth.ea:deal}. All computations and CPU time measurements presented in this study are executed on a computer architecture equipped with an Apple M4 processor.

\subsection{Convergence Test}

To perform the error analysis of our numerical scheme and determine the convergence rates, we use the manufactured solution developed by Takhirov et al.~\cite{IAH_for_NSE}. On the unit square $\Omega = (0,1)^2$, the analytic velocity and pressure solutions for equations~\eqref{nse} are defined as follows

\[
\bfu = \begin{pmatrix} 2\theta(x^2 - x)^2(y^2 - y)(2y - 1) \\ -2\theta(y^2 - y)^2(x^2 - x)(2x - 1) \end{pmatrix}, \quad p = \theta(2x - 1)(2y - 1).
\]

Here, $\theta$ is a parameter used to adjust the value of $\Lambda$, and the right-hand side vector $\bff$ is directly computed to correspond to this solution. For varying spatial mesh sizes $h = 1/2^N$, where $N$ represents the number of refinements, the model parameters are chosen in accordance with the reference study~\cite{IAH_for_NSE}.

The relative iterate change used in the numerical implementation is
\begin{equation} \label{stopping_criteria}
\eta_{n+1}:=
\max \left\{ \frac{\|\bfu_h^{n+1} - \bfu_h^n\|_{\ell^2}}{\|\bfu_h^{n+1}\|_{\ell^2}}, \frac{\|p_h^{n+1} - p_h^n\|_{\ell^2}}{\|p_h^{n+1}\|_{\ell^2}} \right\}.
\end{equation}
The convergence tables use the stopping tolerance $\eta_{n+1}\leq 10^{-10}$ so that the stopping criterion is much tighter than the smallest reported discretization errors, while the cavity and channel performance tests use $\eta_{n+1}\leq 10^{-6}$. All error integrals in the convergence study are evaluated with a five-point Gauss rule in each coordinate direction.

The errors in the various norms and their corresponding convergence rates are reported for the IAH method in Table~\ref{tab:convergence_rates_1} and for AA-IAH with depth $m=2$ in Table~\ref{tab:convergence_rates_2}. Since Anderson acceleration changes the nonlinear iteration path but not the underlying $Q_2$--$Q_1$ spatial discretization, the two algorithms yield nearly identical errors and convergence rates. Therefore, only one representative table is presented for each algorithm, while the remaining nearly identical results are omitted to avoid redundancy and save space. Both tables exhibit the expected spatial convergence behavior.

\begin{table}[ht]
\centering
\caption{Errors and convergence rates for the IAH algorithm with $\theta = 2$, $\alpha = 1$, and $\rho = 543.1$.}
\label{tab:convergence_rates_1}
\renewcommand{\arraystretch}{1.2}
\resizebox{\textwidth}{!}{
\begin{tabular}{|c|c||c|c|c|c|c|c||c|c|}
\cline{3-10}
\multicolumn{2}{c|}{} & \multicolumn{6}{c||}{Errors and CRs in Velocity ($\bu$)} & \multicolumn{2}{c|}{Pressure ($p$)} \\
\hline
$N$ & DoFs ($\bu,p$) & $\|\bu-\bu^h\|$ & CR & $\|\nabla \bu - \nabla \bu^h\|$ & CR & $\|\nabla \cdot \bu^h\|$ & CR & $\|p-p^h\|$ & CR \\
\hline
2 & 162, 25 & 3.5566e-3 & - & 4.8941e-2 & - & 8.7394e-4 & - & 9.0389e-3 & - \\
\hline
3 & 578, 81 & 6.1252e-4 & 2.54 & 1.6824e-2 & 1.54 & 5.9236e-4 & 0.56 & 1.6750e-3 & 2.43 \\
\hline
4 & 2178, 289 & 7.1799e-5 & 3.09 & 3.9932e-3 & 2.07 & 2.6441e-4 & 1.16 & 2.7662e-4 & 2.60 \\
\hline
5 & 8450, 1089 & 6.4077e-6 & 3.49 & 7.5713e-4 & 2.40 & 8.4235e-5 & 1.65 & 3.5775e-5 & 2.95 \\
\hline
6 & 33282, 4225 & 4.9985e-7 & 3.68 & 1.2913e-4 & 2.55 & 2.3258e-5 & 1.86 & 3.3421e-6 & 3.42 \\
\hline
7 & 132098, 16641 & 3.5773e-8 & 3.80 & 2.0310e-5 & 2.67 & 6.0435e-6 & 1.94 & 2.5652e-7 & 3.70 \\
\hline
8 & 526338, 66049 & 2.5922e-9 & 3.79 & 3.3142e-6 & 2.62 & 1.5302e-6 & 1.98 & 1.8211e-8 & 3.82 \\
\hline
\end{tabular}%
}
\end{table}

\begin{table}[ht]
\centering
\caption{Errors and convergence rates for the AA-IAH algorithm with $m=2$, $\theta = 1$, $\alpha = 1$, and $\rho = 2240.4$.}
\label{tab:convergence_rates_2}
\renewcommand{\arraystretch}{1.2}
\resizebox{\textwidth}{!}{
\begin{tabular}{|c|c||c|c|c|c|c|c||c|c|}
\cline{3-10}
\multicolumn{2}{c|}{} & \multicolumn{6}{c||}{Errors and CRs in Velocity ($\bu$)} & \multicolumn{2}{c|}{Pressure ($p$)} \\
\hline
$N$ & DoFs ($\bu,p$) & $\|\bu-\bu^h\|$ & CR & $\|\nabla \bu - \nabla \bu^h\|$ & CR & $\|\nabla \cdot \bu^h\|$ & CR & $\|p-p^h\|$ & CR \\
\hline
2 & 162, 25 & 2.0058e-3 & - & 2.7555e-2 & - & 1.2007e-4 & - & 4.8060e-3 & - \\
\hline
3 & 578, 81 & 4.4136e-4 & 2.18 & 1.2103e-2 & 1.19 & 1.0511e-4 & 0.19 & 1.0035e-3 & 2.26 \\
\hline
4 & 2178, 289 & 7.7214e-5 & 2.52 & 4.2335e-3 & 1.52 & 7.2599e-5 & 0.53 & 2.0136e-4 & 2.32 \\
\hline
5 & 8450, 1089 & 9.1375e-6 & 3.08 & 1.0117e-3 & 2.07 & 3.2779e-5 & 1.15 & 3.4537e-5 & 2.54 \\
\hline
6 & 33282, 4225 & 8.1737e-7 & 3.48 & 1.9127e-4 & 2.40 & 1.0495e-5 & 1.64 & 4.5428e-6 & 2.93 \\
\hline
7 & 132098, 16641 & 6.3515e-8 & 3.69 & 3.2208e-5 & 2.57 & 2.9034e-6 & 1.85 & 4.2750e-7 & 3.41 \\
\hline
8 & 526338, 66049 & 4.4479e-9 & 3.84 & 4.8437e-6 & 2.73 & 7.5508e-7 & 1.94 & 3.2908e-8 & 3.70 \\
\hline
\end{tabular}%
}
\end{table}

\subsection{Lid-Driven Cavity Flow}

In this subsection, the lid-driven cavity problem on $\Omega = (0,1)^2$ is considered as a second benchmark. A unit tangential velocity drives the top lid in the positive $x$-direction, while homogeneous Dirichlet (no-slip) conditions are enforced on the three remaining walls. To prevent the corner singularities that arise from the velocity discontinuity at the upper edges, the top-lid profile is smoothed following the regularization proposed in~\cite{deFrutos2016}
\begin{equation*}
    \bfu(x, 1) = (u_1(x), 0)^T
\end{equation*}
where $u_1(x)$ is defined as follows
\begin{equation}
    u_1(x) = \begin{cases}
    1 - \frac{1}{4}\left(1 - \cos\left(\frac{0.1 - x}{0.1}\pi\right)\right)^2 & 0 \le x \le 0.1, \\
    1 & 0.1 < x < 0.9, \\
    1 - \frac{1}{4}\left(1 - \cos\left(\frac{x - 0.9}{0.1}\pi\right)\right)^2 & 0.9 \le x \le 1.
    \end{cases}
\end{equation}

We evaluate the algorithm for six different Reynolds numbers ($Re = 1{,}000, 2{,}500, 5{,}000, 10{,}000, 12{,}500$, and $15{,}000$), setting $\rho = 100$ and $\alpha = 1$ for all simulations. The algorithmic parameters $\rho = 100$ and $\alpha = 1$ are adopted
directly from the reference study~\cite{IAH_for_NSE}, in which this
choice was shown to yield stable and convergent results across a wide
range of Reynolds numbers. All subsequent results are reported on a mesh with a resolution of $256 \times 256$, corresponding to a total of 592,387 degrees of freedom (DoFs), comprising 526,338 DoFs for the velocity field and 66,049 DoFs for the pressure.

Figures~\ref{fig:performance_comparison} and~\ref{fig:converge_history} compare the IAH and proposed AA-IAH methods in terms of total iteration counts, CPU times, and relative iterate-change histories for Reynolds numbers ranging from $1{,}000$ to $15{,}000$. To avoid visual complexity in the iterate-change plots, only the case $m=4$ is presented for all Reynolds numbers. As the results indicate, the AA-IAH method significantly reduces the computational burden in both low- and high-$Re$ regimes. The performance panels show greater efficiency than IAH for every tested value of $m$ and $Re$.

Moreover, a detailed examination of the convergence histories reveals that the IAH method exhibits a slow but smoother decrease toward the tolerance threshold. In contrast, the proposed AA-IAH algorithm displays noticeable oscillations. These fluctuations are a natural consequence of combining previous mapped iterates, as also observed in~\cite{contractive_noncontractive}. The influence of the memory depth $m$ is examined below through centerline-profile comparisons.

\begin{figure*}[ht]
    \centering
    \begin{subfigure}{0.32\textwidth}
        \includegraphics[width = \linewidth]{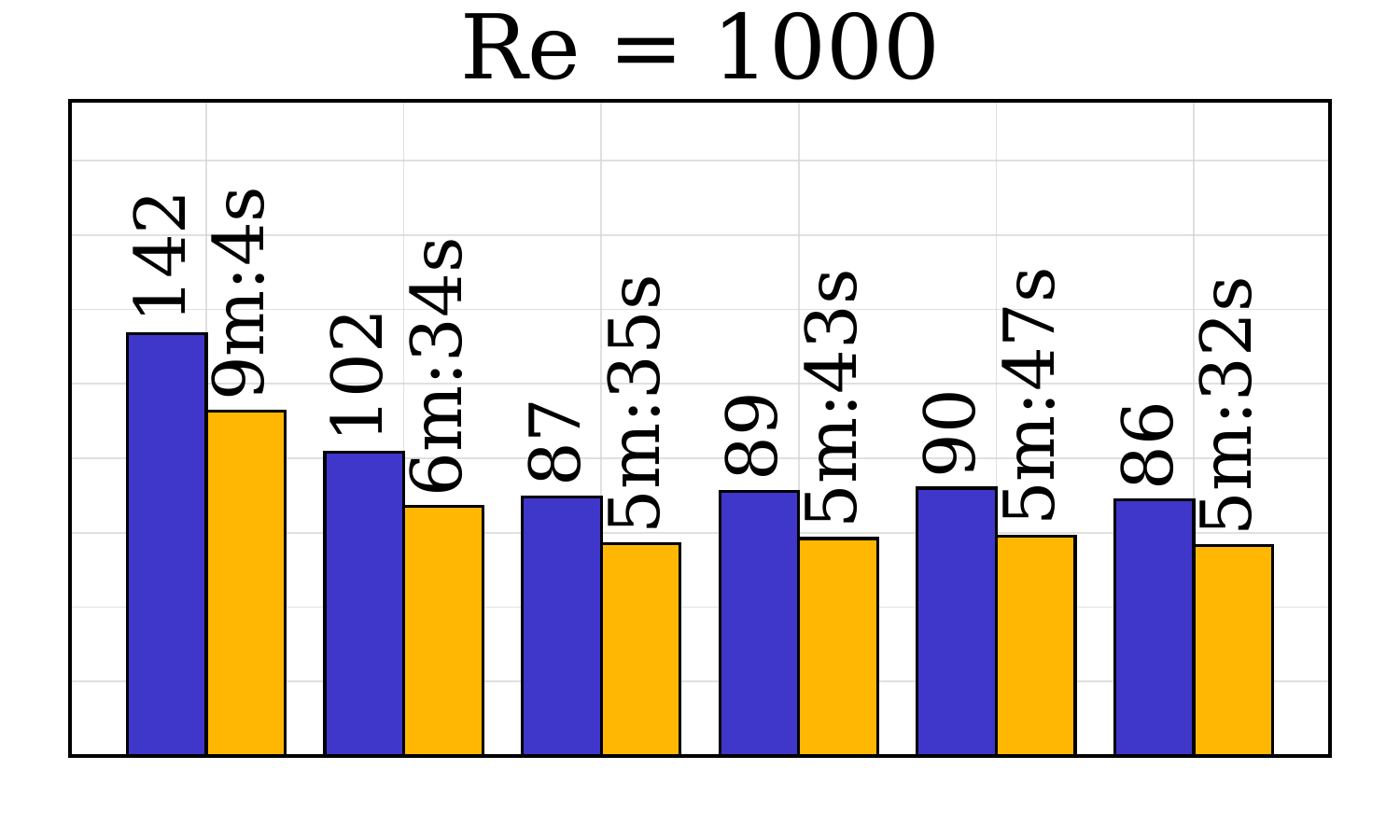}
    \end{subfigure}
    \begin{subfigure}{0.32\textwidth}
        \includegraphics[width = \linewidth]{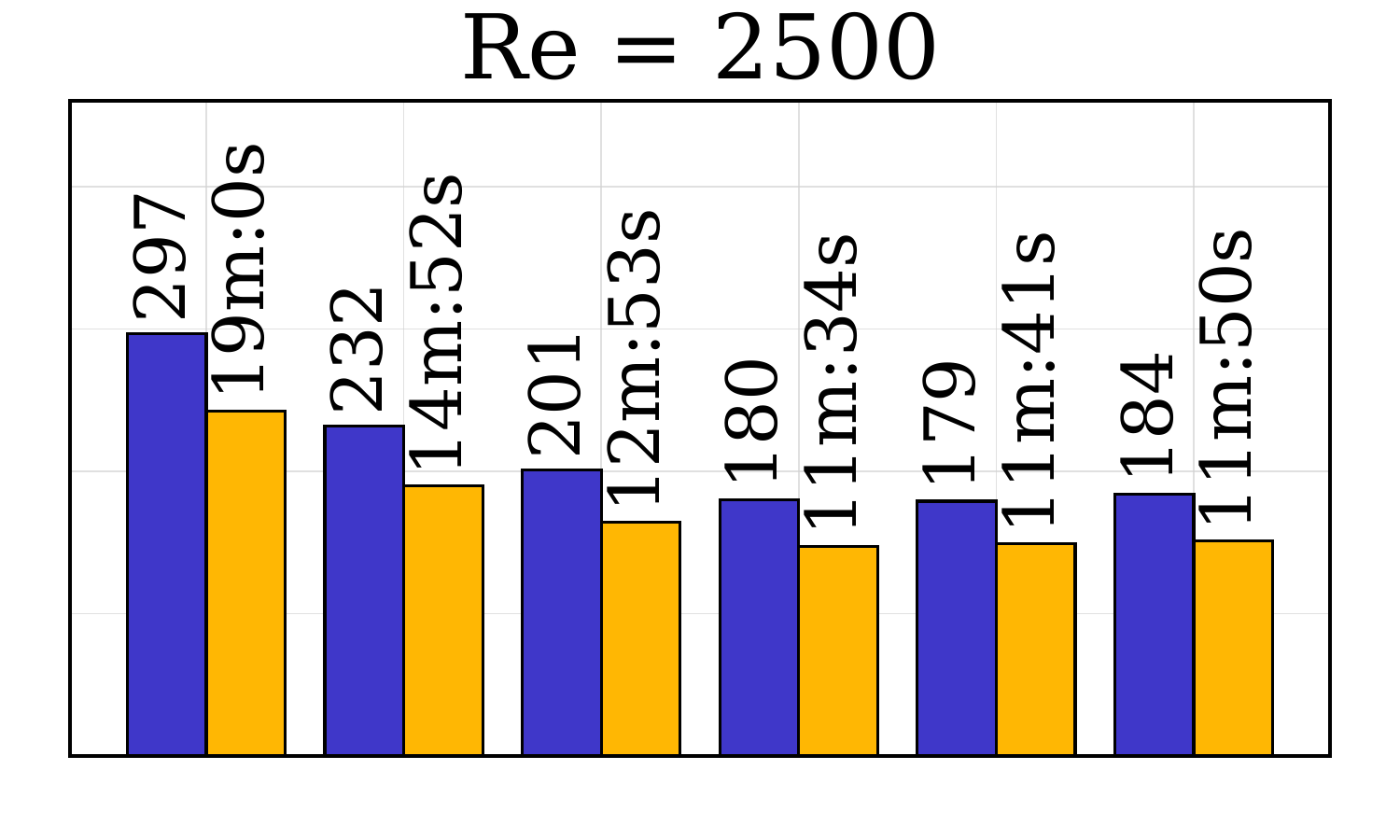}
    \end{subfigure}
    \begin{subfigure}{0.32\textwidth}
        \includegraphics[width = \linewidth]{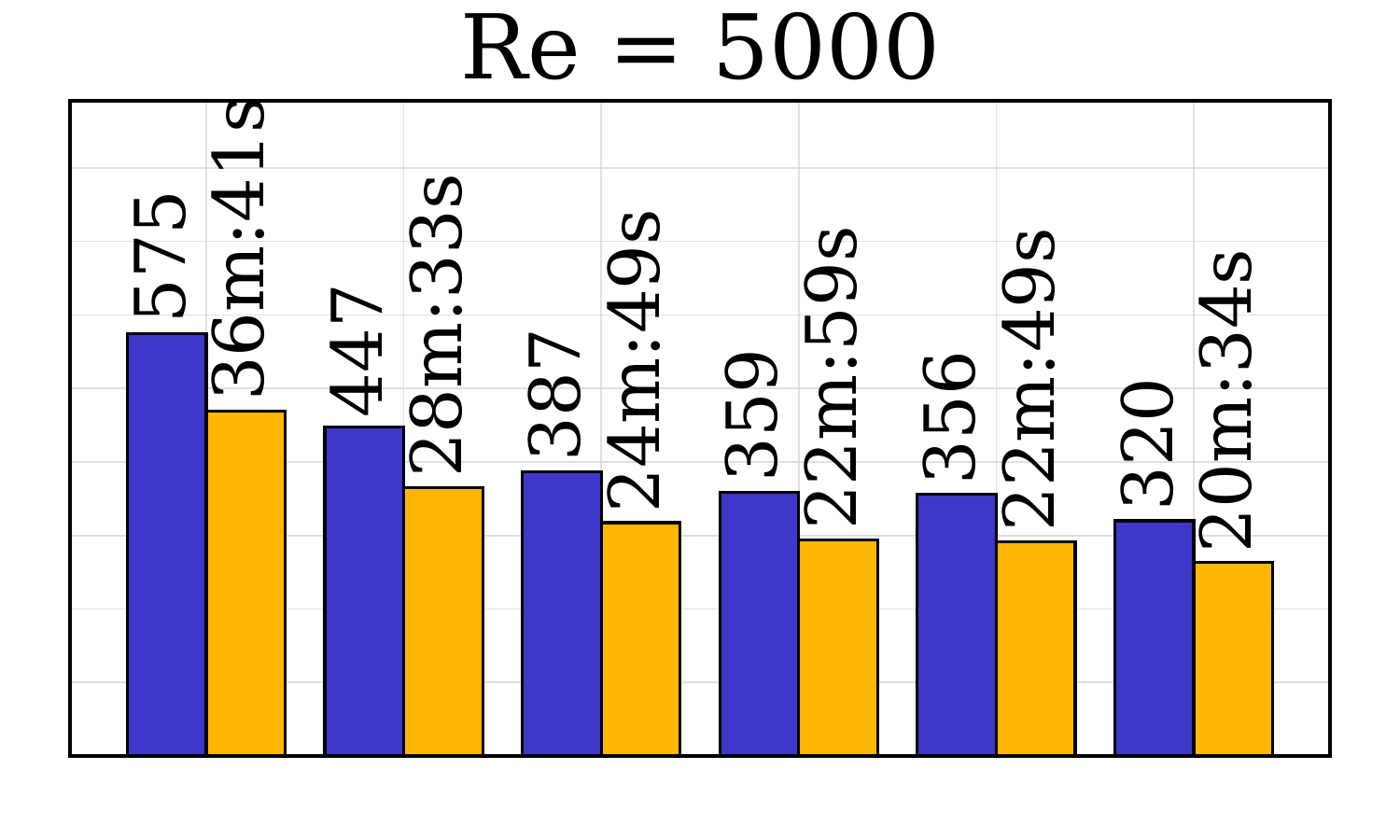}
    \end{subfigure}
    \begin{subfigure}{0.32\textwidth}
        \includegraphics[width = \linewidth]{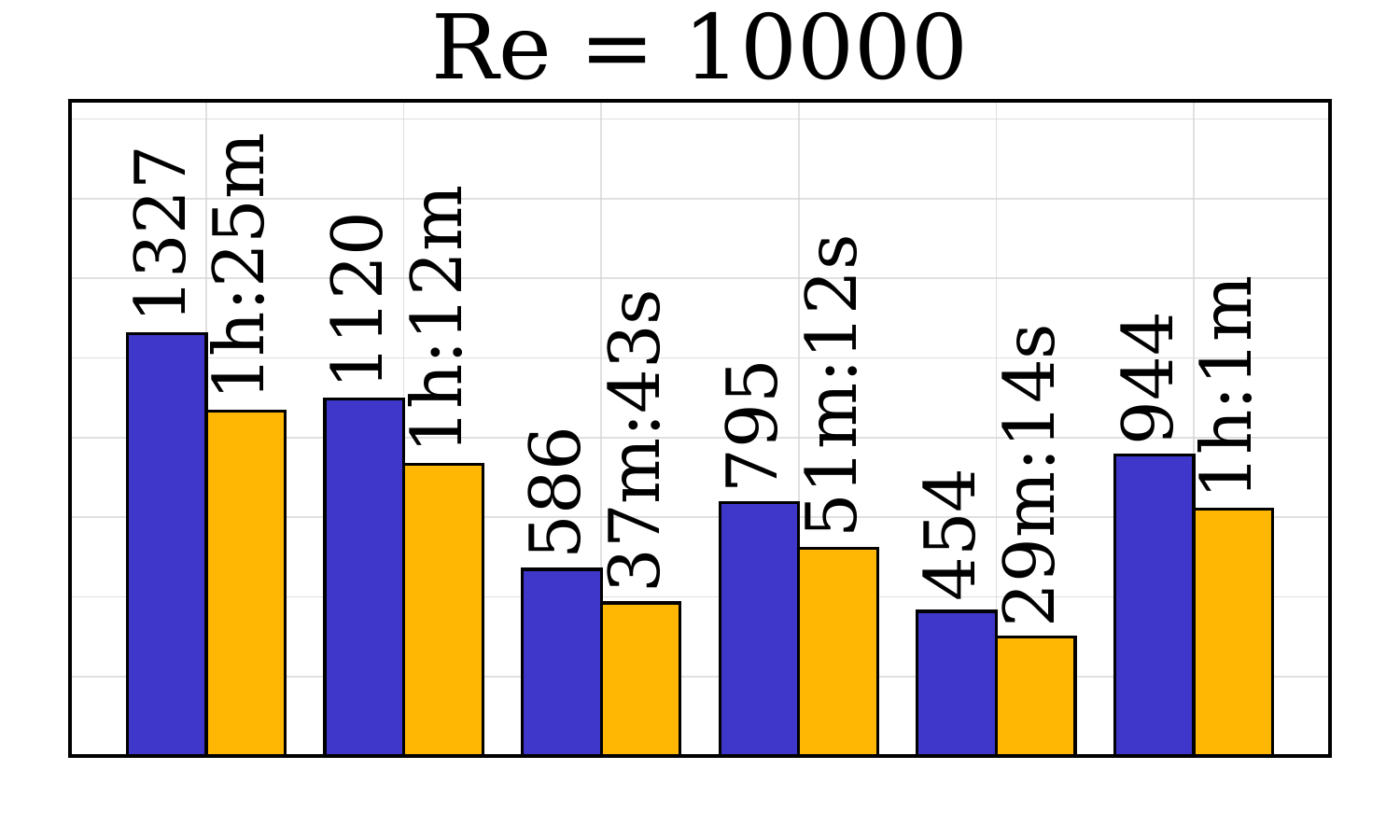}
    \end{subfigure}
    \begin{subfigure}{0.32\textwidth}
        \includegraphics[width = \linewidth]{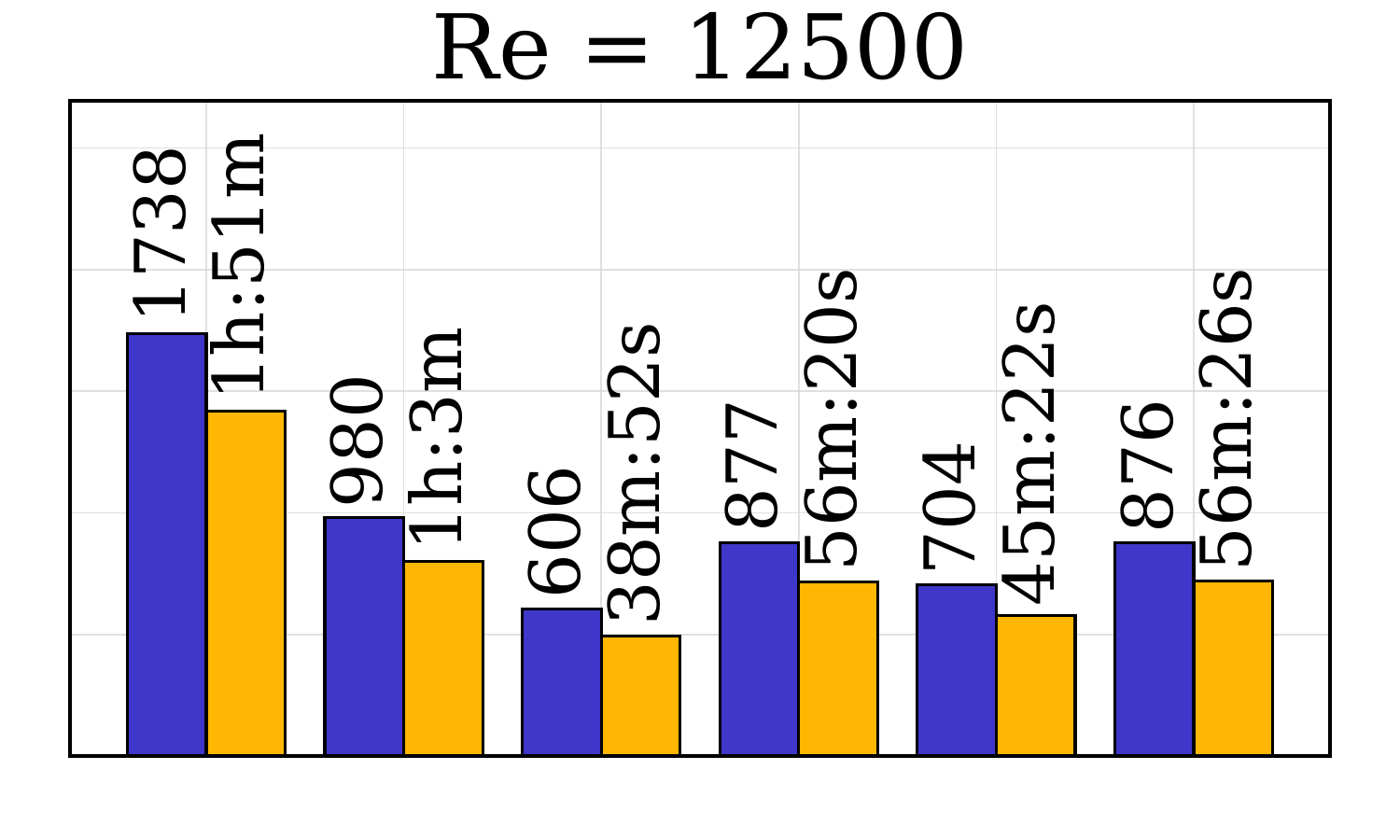}
    \end{subfigure}
    \begin{subfigure}{0.32\textwidth}
        \includegraphics[width = \linewidth]{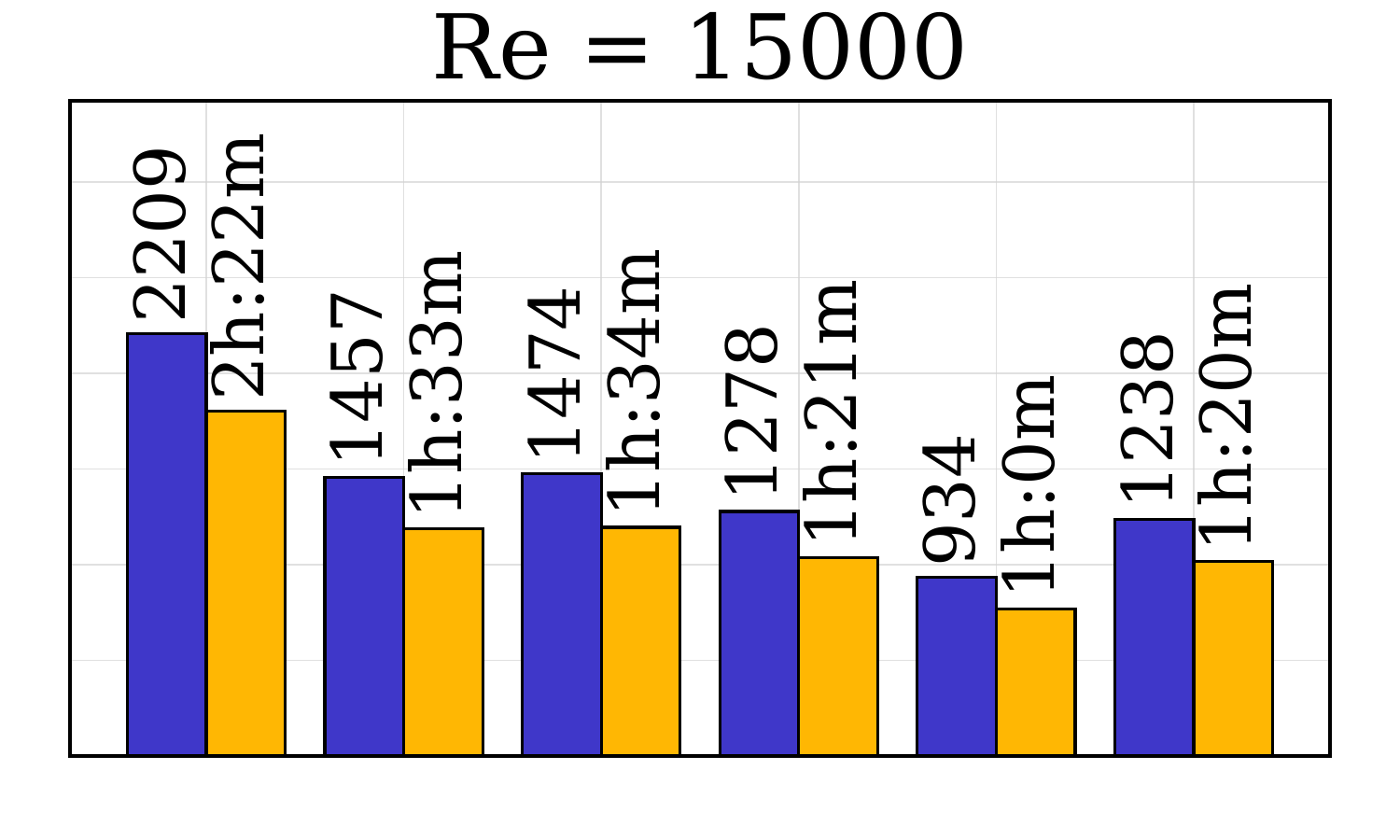}
    \end{subfigure}
    \caption{Performance comparison of IAH and AA-IAH($m$) algorithms in terms of total iteration counts and CPU times. Leftmost bars represent IAH, while the remaining bars correspond to AA-IAH with $m = 1, 2, 3, 4$ from left to right.}
    \label{fig:performance_comparison}
\end{figure*}

\begin{figure*}[ht]
    \centering

    \begin{subfigure}[b]{0.365\textwidth}
        \includegraphics[trim={0 0 0 0}, clip, width=\textwidth]{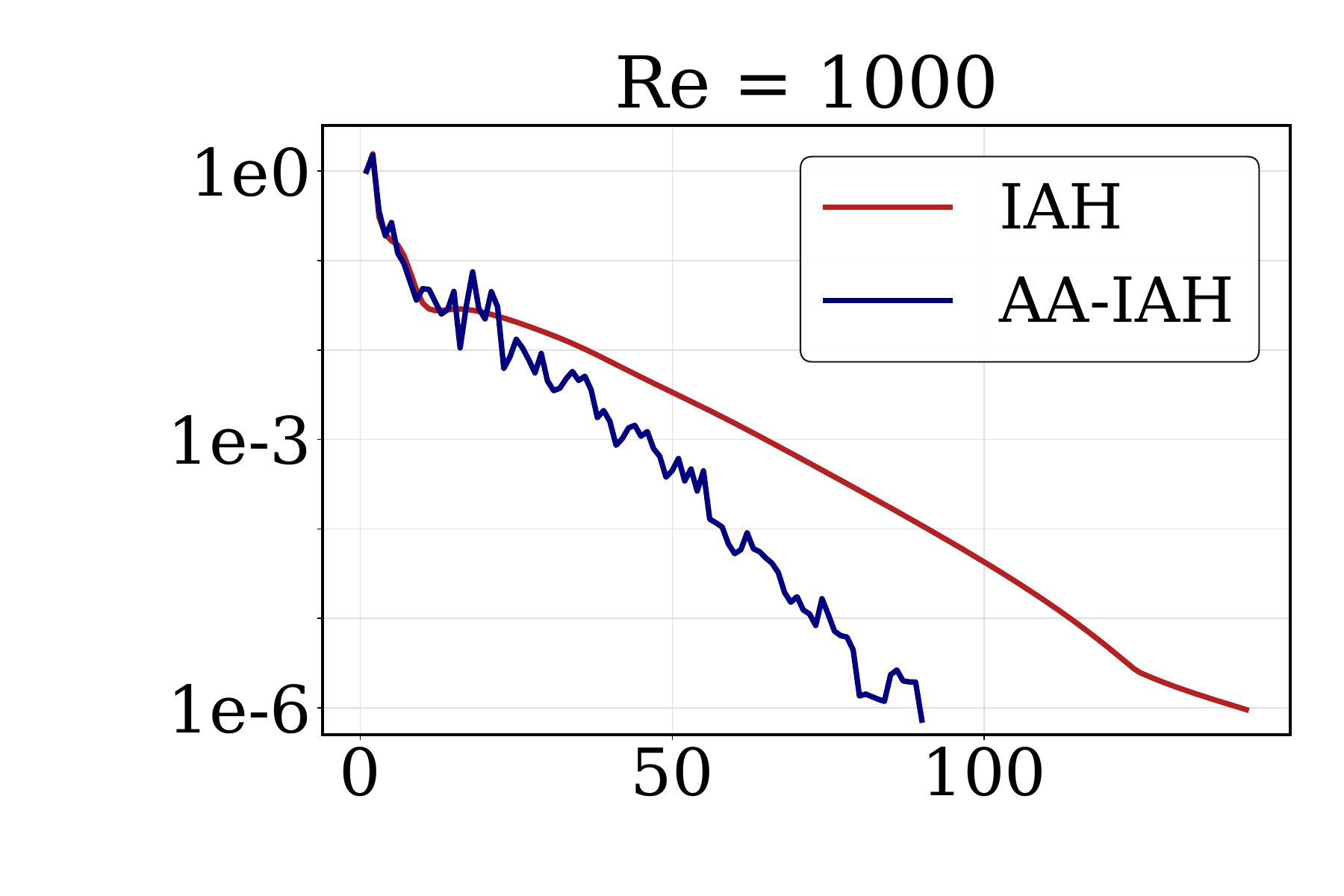}
    \end{subfigure}
    \hfill
    \begin{subfigure}[b]{0.30\textwidth}
        \includegraphics[trim={150 0 0 0}, clip, width=\textwidth]{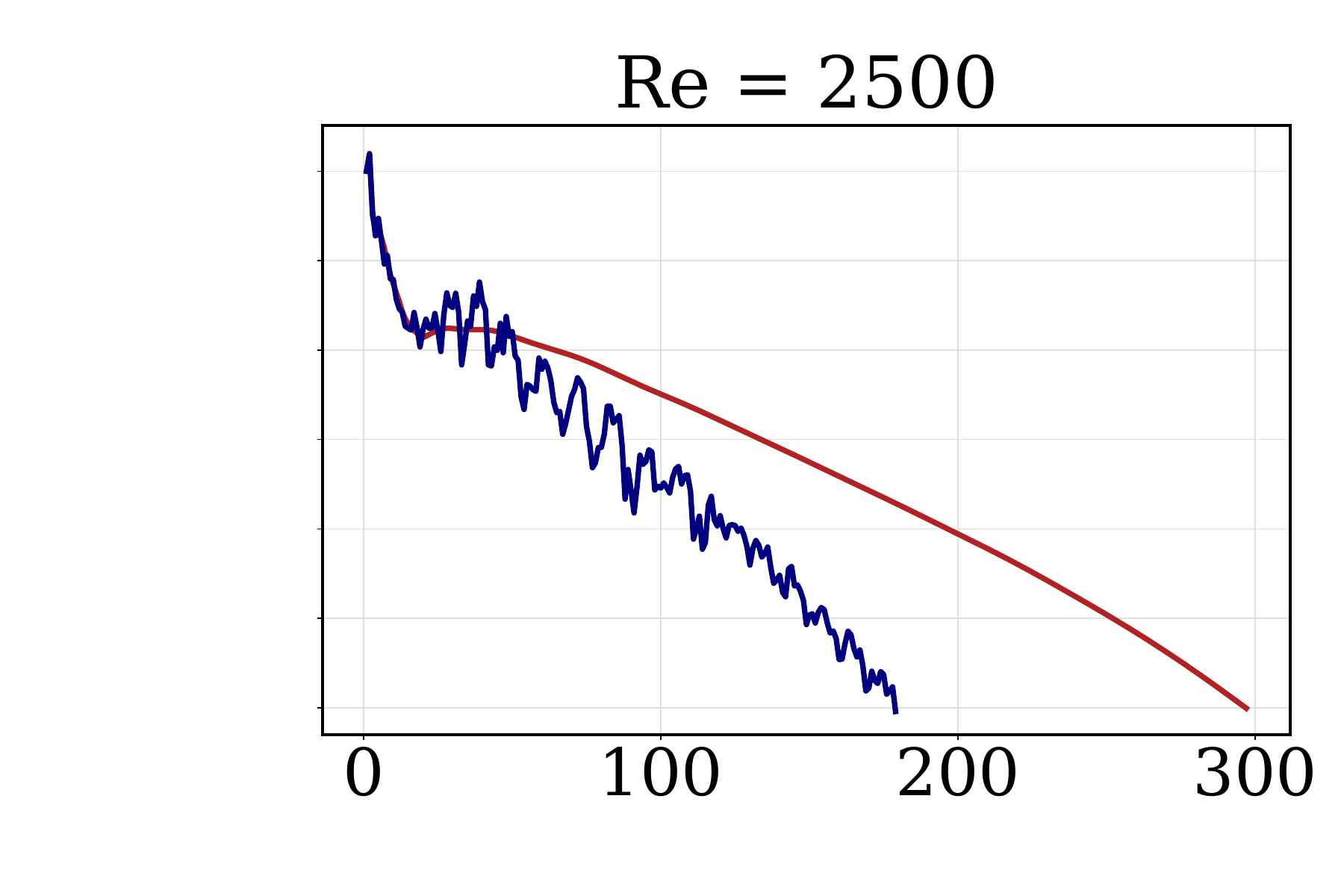}
    \end{subfigure}
    \hfill
    \begin{subfigure}[b]{0.30\textwidth}
        \includegraphics[trim={150 0 0 0}, clip, width=\textwidth]{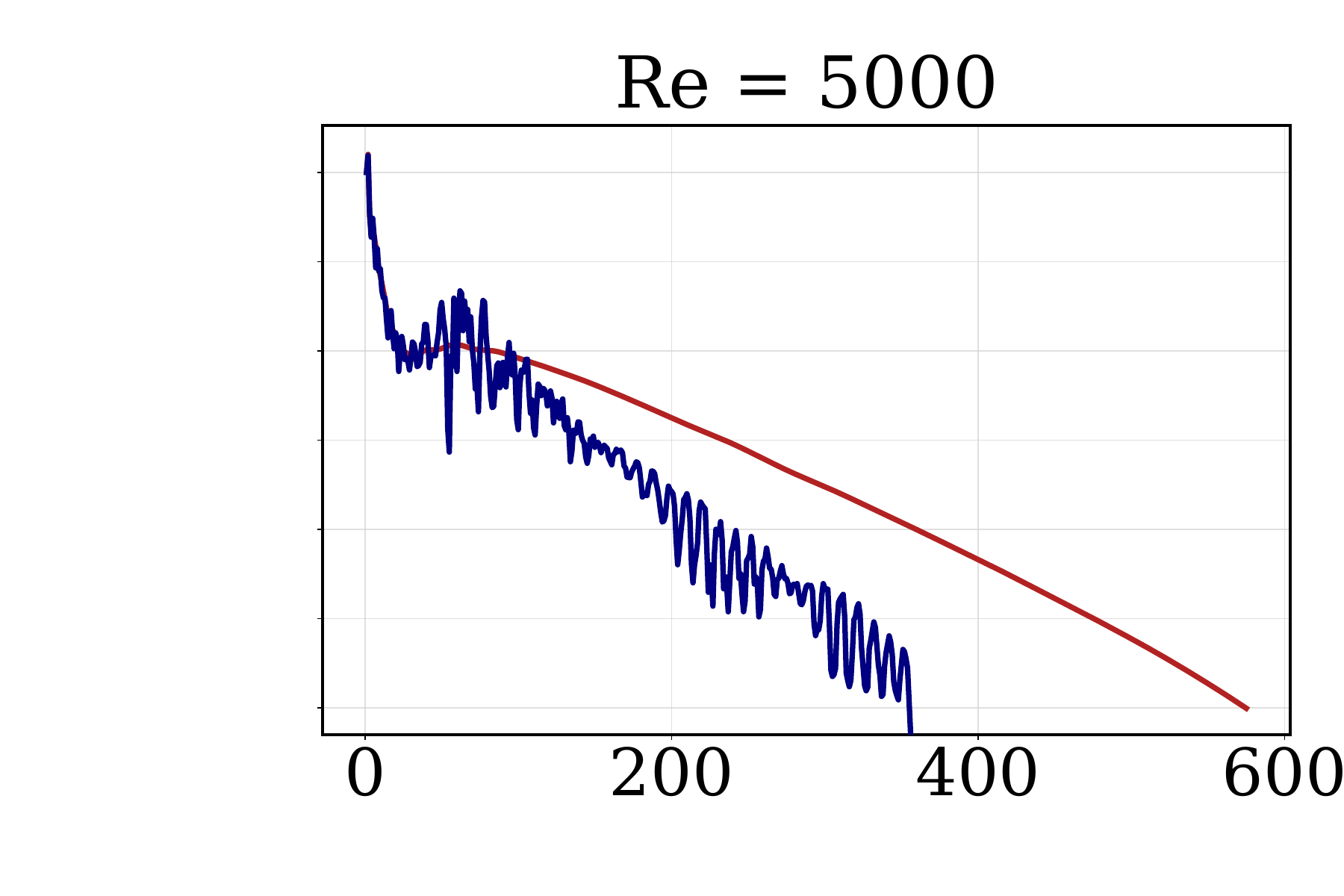}
    \end{subfigure}

    \vspace{0.5em}

    \begin{subfigure}[b]{0.365\textwidth}
        \includegraphics[trim={0 0 0 0}, clip, width=\textwidth]{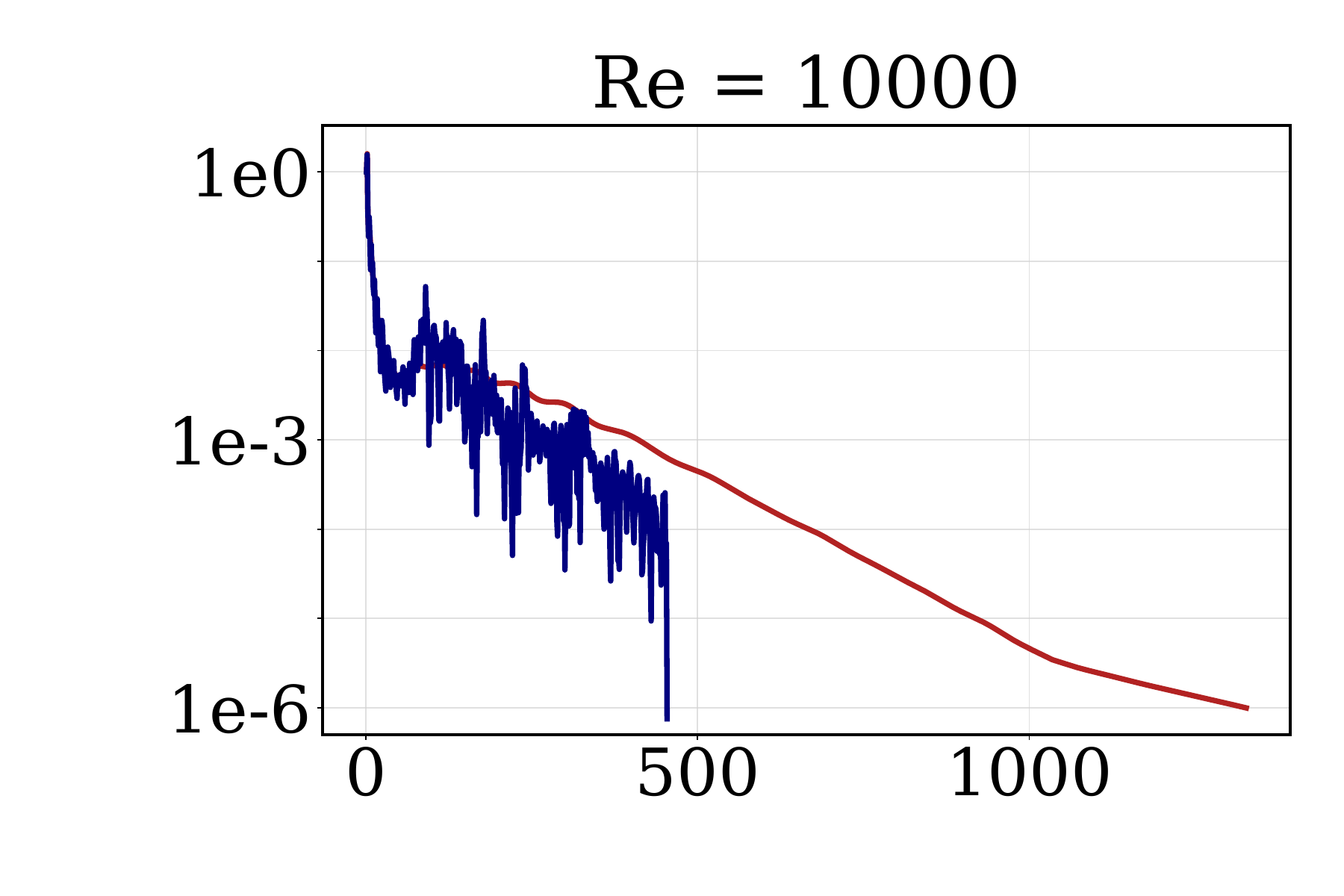}
    \end{subfigure}
    \hfill
    \begin{subfigure}[b]{0.30\textwidth}
        \includegraphics[trim={150 0 0 0}, clip, width=\textwidth]{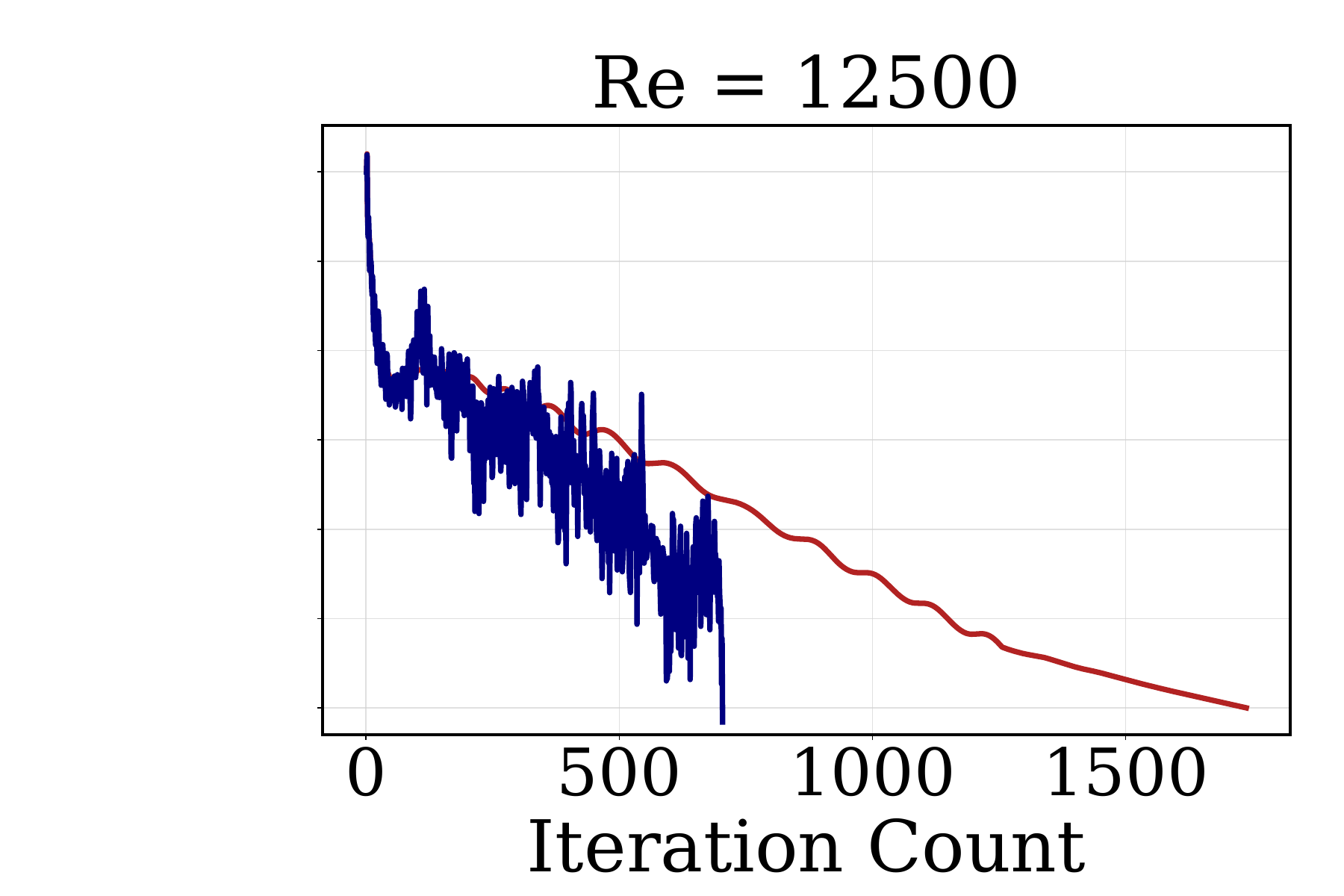}
    \end{subfigure}
    \hfill
    \begin{subfigure}[b]{0.30\textwidth}
        \includegraphics[trim={150 0 0 0}, clip, width=\textwidth]{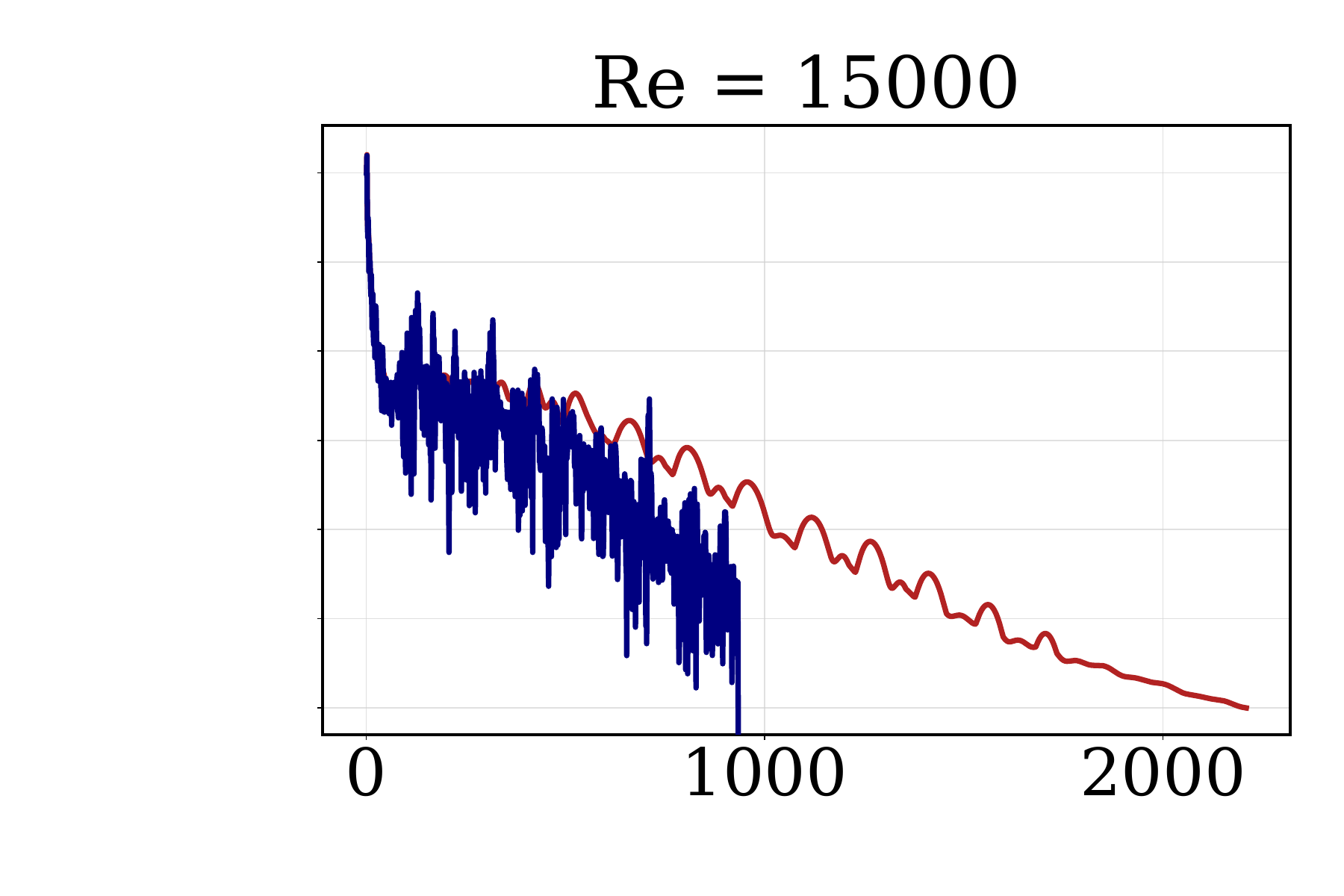}
    \end{subfigure}
    \caption{Relative iterate-change history of the IAH and AA-IAH($4$) algorithms.}
    \label{fig:converge_history}
\end{figure*}

For a quantitative comparison, the numerical results obtained by our algorithm are compared against the
reference values of the velocity components along the centerlines provided by Erturk et al.~\cite{Erturk}.
Tables~\ref{lid_driven_re1000} and~\ref{lid_driven_re15000} present these comparisons. As can be seen from
the tables, the results are in good agreement with the reference values. 

Figure~\ref{fig:streamline} shows the streamline structures for representative Reynolds numbers. The observed vortex topology and general flow patterns agree well with the visual results reported by Erturk et al.~\cite{Erturk}. The effect of the memory depth $m$ on the converged centerline profiles is examined for Reynolds numbers ranging from $Re = 1{,}000$ to $15{,}000$. Figure~\ref{fig:profil} shows the centerline profiles of the $u$ and $v$ velocity components for $Re \ge 10{,}000$ and their agreement with the reference data.

The velocity profiles generated by IAH and AA-IAH for the tested values of $m$ closely follow the reference data at the plotted resolution. The curves largely overlap, while the zoomed windows near the turning points make the small differences among the methods and reference values visible. This comparison is a centerline-velocity diagnostic rather than a norm-based full-field error estimate.

Together, the centerline velocity profiles in Figure~\ref{fig:profil} and streamlines in Figure~\ref{fig:streamline} show that the measured reduction in computation time is accompanied by closely matching velocity diagnostics for these cavity tests.

\begin{figure}[ht]
    \centering
    \begin{subfigure}{0.23\textwidth}
        \includegraphics[width = \linewidth]{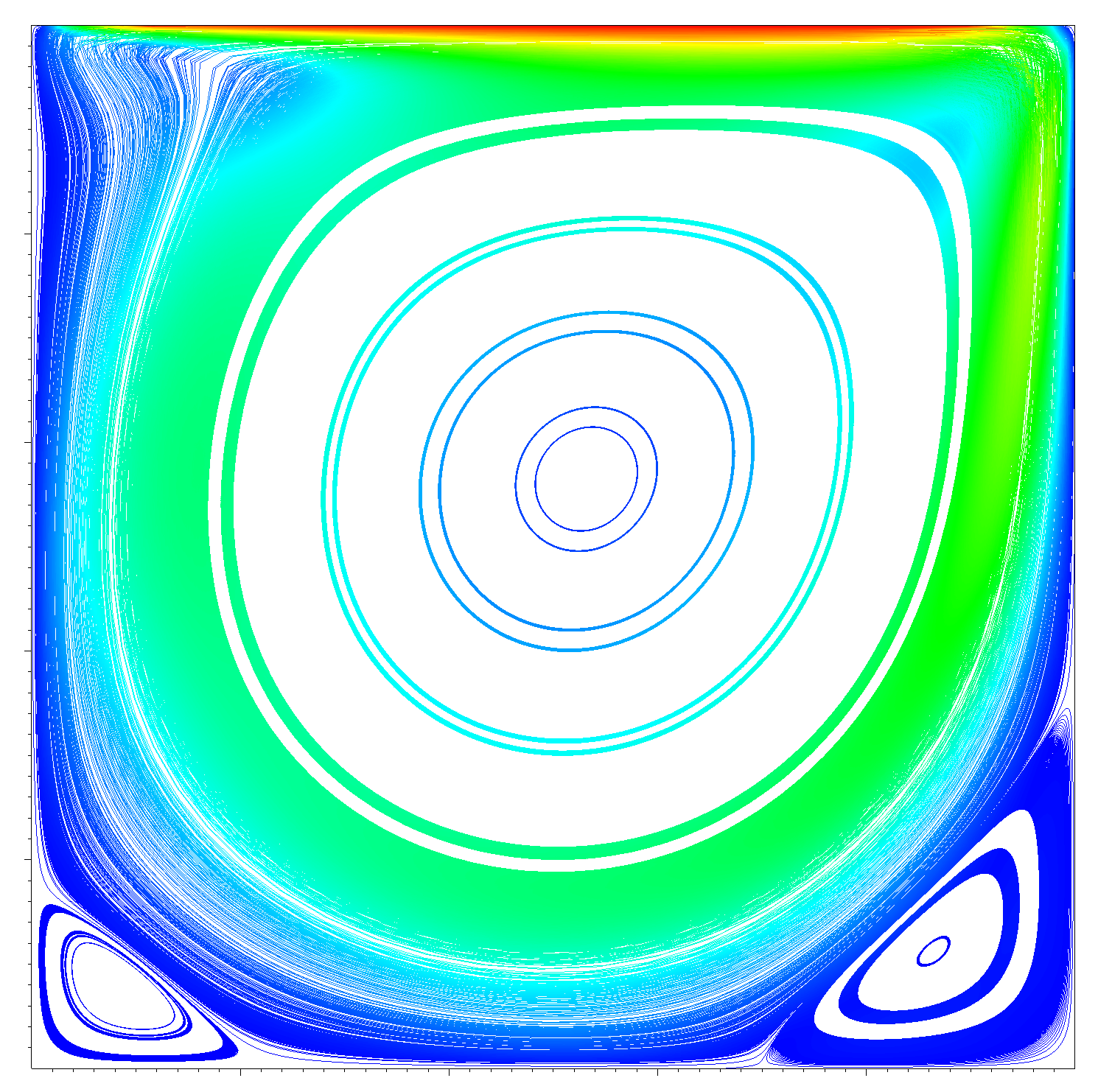}
        \caption{$Re = 1000$}
        \label{fig:streamline_1000}
    \end{subfigure}
    \begin{subfigure}{0.23\textwidth}
        \includegraphics[width = \linewidth]{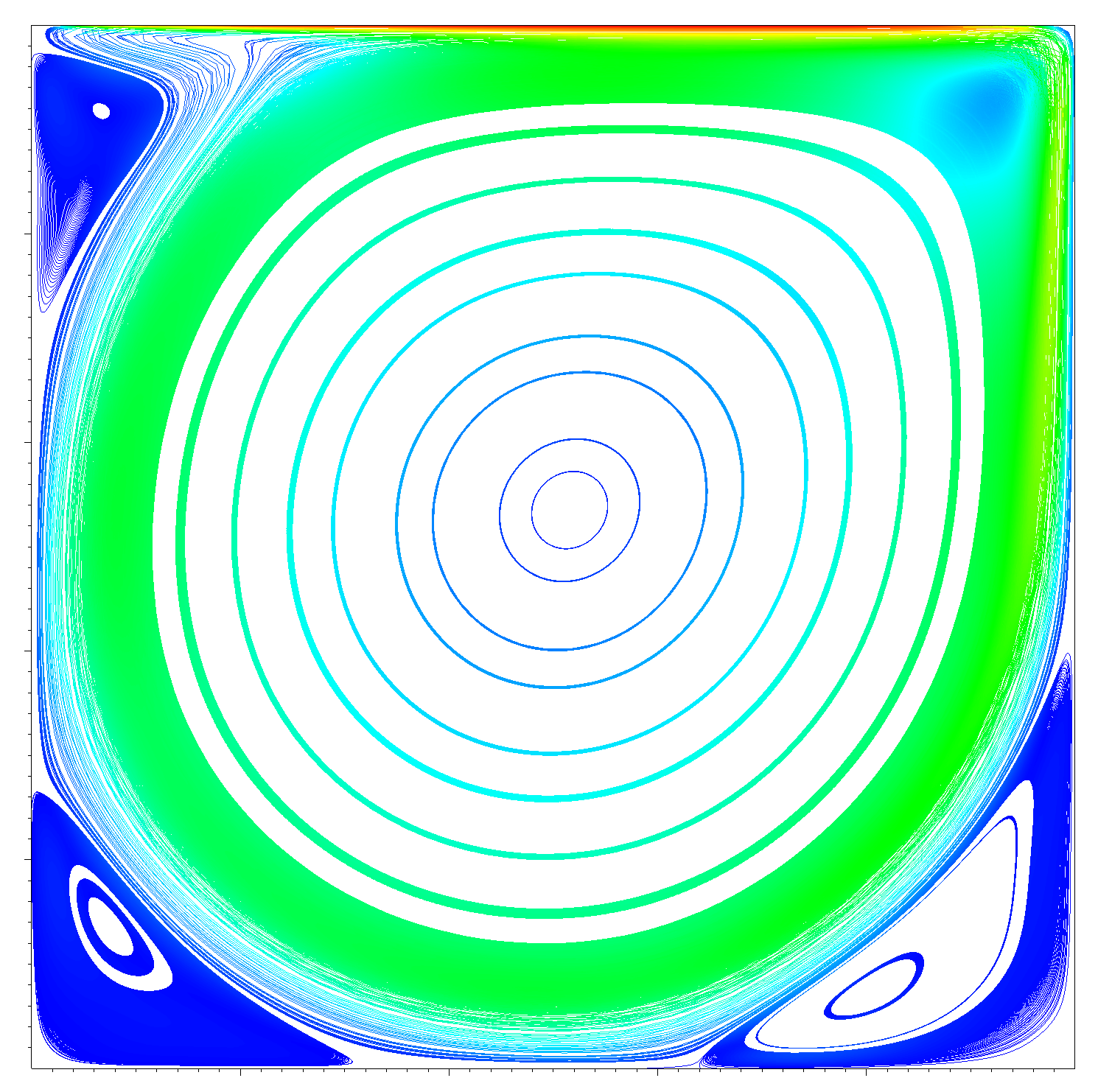}
        \caption{$Re = 5000$}
        \label{fig:streamline_5000}
    \end{subfigure}
    \begin{subfigure}{0.23\textwidth}
        \includegraphics[width = \linewidth]{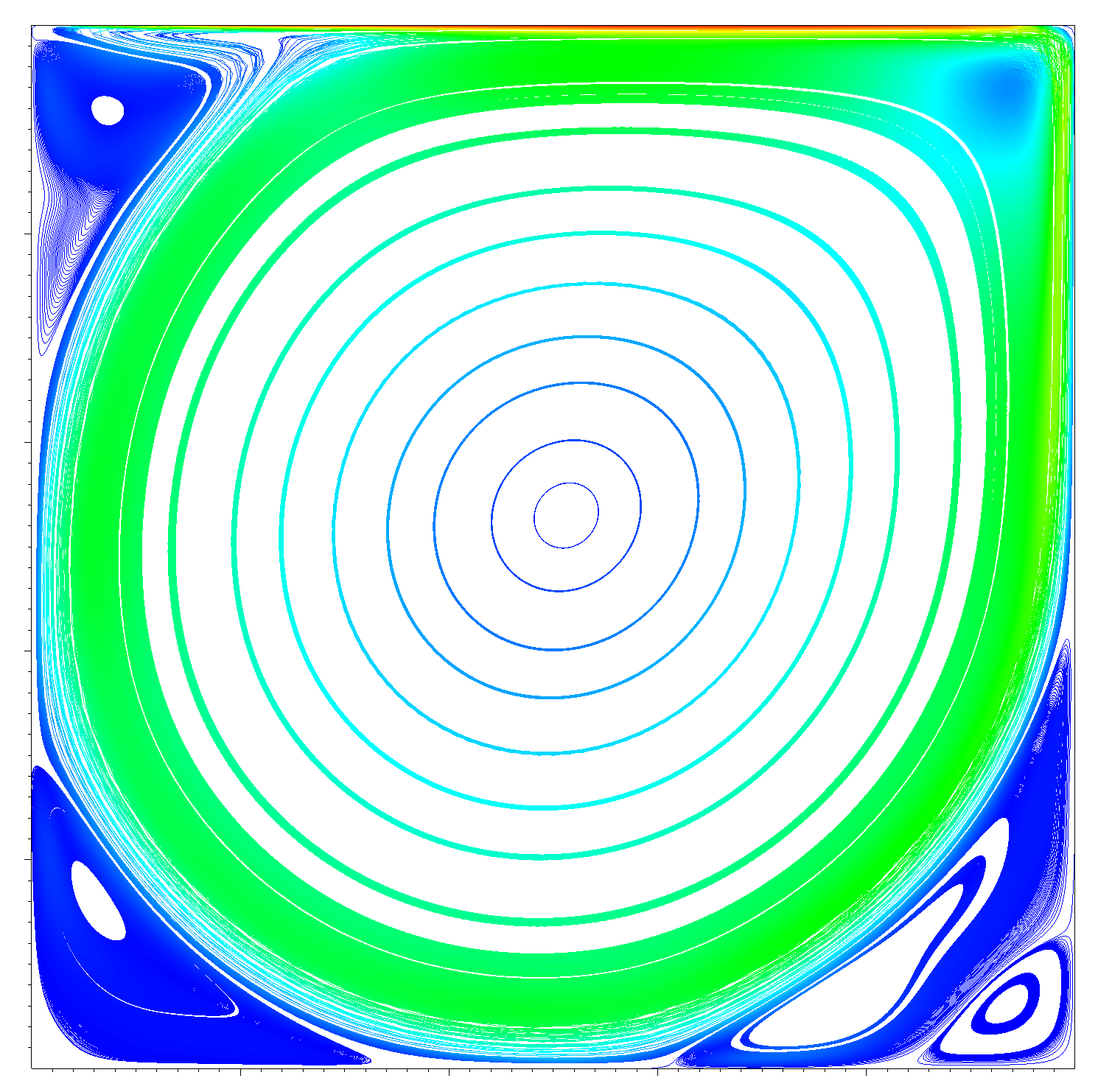}
        \caption{$Re = 10{,}000$}
        \label{fig:streamline_10000}
    \end{subfigure}
    \begin{subfigure}{0.23\textwidth}
        \includegraphics[width = \linewidth]{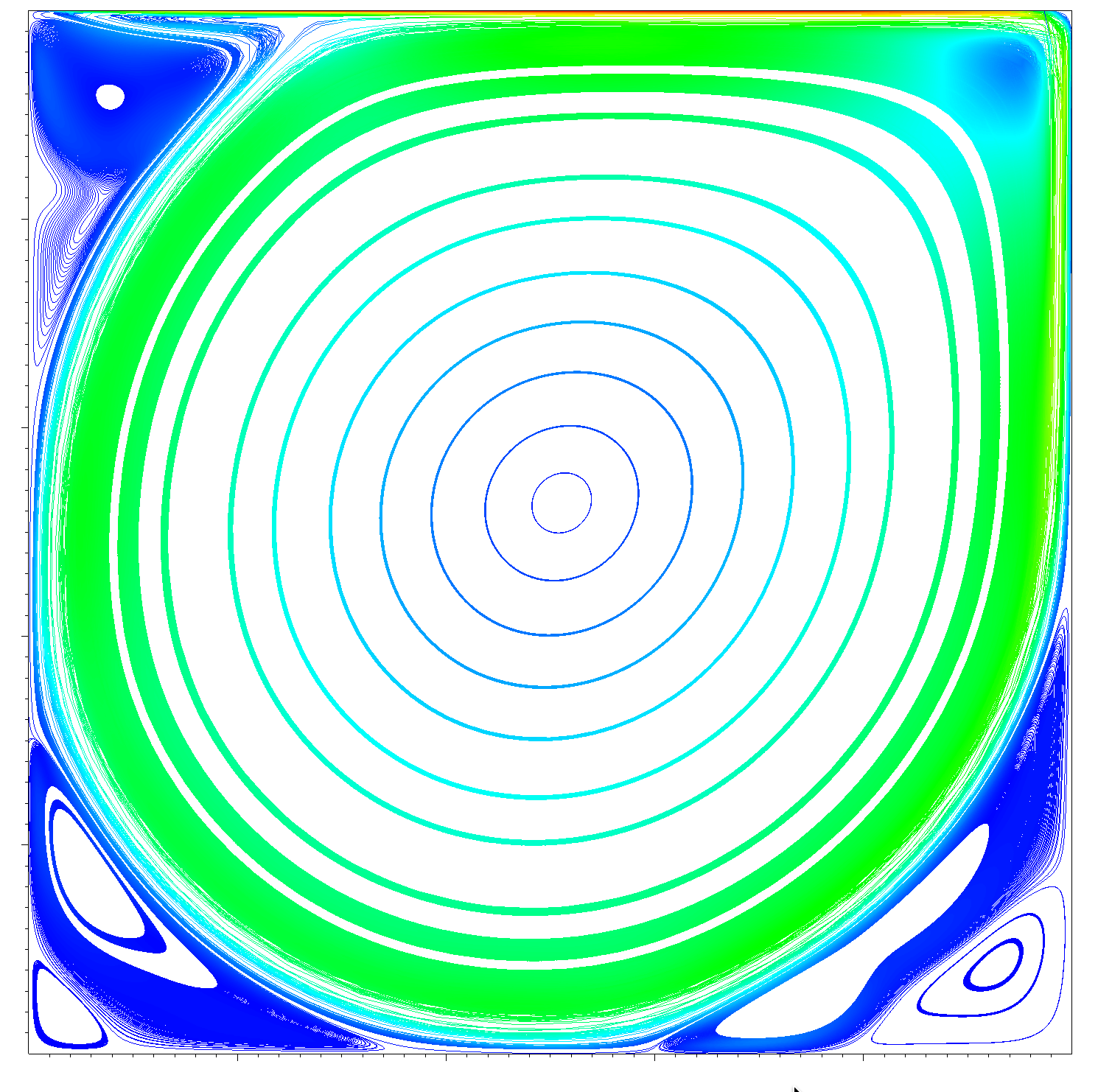}
        \caption{$Re = 15{,}000$}
        \label{fig:streamline_15000}
    \end{subfigure}
    \caption{Streamline patterns for the lid-driven cavity flow.}
    \label{fig:streamline}
\end{figure}

\begin{table}[ht]
    \centering
    \caption{Centerline velocity components at $Re = 1000$ compared with~\cite{Erturk}.}
    \label{lid_driven_re1000}
    \begin{tabular}{rrrrrr}
    \hline
    \multicolumn{1}{c}{$x$} & 
    \multicolumn{1}{c}{~\cite{Erturk}} & 
    \multicolumn{1}{c}{AA-IAH} & 
    \multicolumn{1}{c}{$y$} & 
    \multicolumn{1}{c}{~\cite{Erturk}} & 
    \multicolumn{1}{c}{AA-IAH} \\
    \hline
    1.000 & 0.0000 & 0.00000 & 1.000 & 1.0000 & 1.00000 \\
    0.985 & $-0.0973$ & $-0.09675$ & 0.990 & 0.8486 & 0.84539 \\
    0.970 & $-0.2173$ & $-0.21582$ & 0.980 & 0.7065 & 0.70095 \\
    0.955 & $-0.3400$ & $-0.33739$ & 0.970 & 0.5917 & 0.58553 \\
    0.940 & $-0.4417$ & $-0.43831$ & 0.960 & 0.5102 & 0.50392 \\
    0.925 & $-0.5052$ & $-0.50164$ & 0.950 & 0.4582 & 0.45236 \\
    0.910 & $-0.5263$ & $-0.52305$ & 0.940 & 0.4276 & 0.42239 \\
    0.895 & $-0.5132$ & $-0.51029$ & 0.930 & 0.4101 & 0.40529 \\
    0.880 & $-0.4803$ & $-0.47768$ & 0.920 & 0.3993 & 0.39502 \\
    0.865 & $-0.4407$ & $-0.43831$ & 0.910 & 0.3913 & 0.38728 \\
    0.850 & $-0.4028$ & $-0.40051$ & 0.900 & 0.3838 & 0.38002 \\
    0.500 & 0.0258 & 0.02631 & 0.500 & $-0.0620$ & $-0.06138$ \\
    0.150 & 0.3756 & 0.37196 & 0.200 & $-0.3756$ & $-0.37327$ \\
    0.135 & 0.3705 & 0.36653 & 0.180 & $-0.3869$ & $-0.38419$ \\
    0.120 & 0.3605 & 0.35632 & 0.160 & $-0.3854$ & $-0.38245$ \\
    0.105 & 0.3460 & 0.34168 & 0.140 & $-0.3690$ & $-0.36583$ \\
    0.090 & 0.3273 & 0.32293 & 0.120 & $-0.3381$ & $-0.33498$ \\
    0.075 & 0.3041 & 0.29976 & 0.100 & $-0.2960$ & $-0.29313$ \\
    0.060 & 0.2746 & 0.27042 & 0.080 & $-0.2472$ & $-0.24476$ \\
    0.045 & 0.2349 & 0.23096 & 0.060 & $-0.1951$ & $-0.19309$ \\
    0.030 & 0.1792 & 0.17583 & 0.040 & $-0.1392$ & $-0.13775$ \\
    0.015 & 0.1019 & 0.09982 & 0.020 & $-0.0757$ & $-0.07485$ \\
    0.000 & 0.0000 & 0.00000 & 0.000 & 0.0000 & 0.00000 \\
    \hline
    \end{tabular}%
\end{table}

\begin{table}[ht]
    \centering
    \caption{Centerline velocity components at $Re = 15{,}000$ compared with~\cite{Erturk}.}
    \label{lid_driven_re15000}
    \begin{tabular}{rrrrrr}
    \hline
    \multicolumn{1}{c}{$x$} & 
    \multicolumn{1}{c}{~\cite{Erturk}} & 
    \multicolumn{1}{c}{AA-IAH} & 
    \multicolumn{1}{c}{$y$} & 
    \multicolumn{1}{c}{~\cite{Erturk}} & 
    \multicolumn{1}{c}{AA-IAH} \\
    \hline
    1.000 & 0.0000 & 0.00000 & 1.000 & 1.0000 & 1.00000 \\
    0.985 & $-0.4041$ & $-0.41905$ & 0.990 & 0.5358 & 0.54968 \\
    0.970 & $-0.5593$ & $-0.56610$ & 0.980 & 0.4850 & 0.49420 \\
    0.955 & $-0.4754$ & $-0.48403$ & 0.970 & 0.4969 & 0.50915 \\
    0.940 & $-0.4505$ & $-0.46086$ & 0.960 & 0.4937 & 0.50589 \\
    0.925 & $-0.4361$ & $-0.44636$ & 0.950 & 0.4811 & 0.49182 \\
    0.910 & $-0.4186$ & $-0.42887$ & 0.940 & 0.4653 & 0.47530 \\
    0.895 & $-0.4005$ & $-0.41063$ & 0.930 & 0.4492 & 0.45992 \\
    0.880 & $-0.3828$ & $-0.39209$ & 0.920 & 0.4338 & 0.44480 \\
    0.865 & $-0.3654$ & $-0.37431$ & 0.910 & 0.4190 & 0.43007 \\
    0.850 & $-0.3481$ & $-0.35721$ & 0.900 & 0.4047 & 0.41518 \\
    0.500 & 0.0074 & 0.00669 & 0.500 & $-0.0247$ & $-0.02614$ \\
    0.150 & 0.3483 & 0.35665 & 0.200 & $-0.2942$ & $-0.30302$ \\
    0.135 & 0.3641 & 0.37360 & 0.180 & $-0.3119$ & $-0.32103$ \\
    0.120 & 0.3801 & 0.38959 & 0.160 & $-0.3297$ & $-0.33887$ \\
    0.105 & 0.3964 & 0.40580 & 0.140 & $-0.3474$ & $-0.35704$ \\
    0.090 & 0.4132 & 0.42269 & 0.120 & $-0.3652$ & $-0.37498$ \\
    0.075 & 0.4323 & 0.44262 & 0.100 & $-0.3827$ & $-0.39252$ \\
    0.060 & 0.4529 & 0.46303 & 0.080 & $-0.4001$ & $-0.41006$ \\
    0.045 & 0.4580 & 0.46835 & 0.060 & $-0.4286$ & $-0.43976$ \\
    0.030 & 0.4152 & 0.42146 & 0.040 & $-0.4474$ & $-0.45651$ \\
    0.015 & 0.3083 & 0.31293 & 0.020 & $-0.3278$ & $-0.32766$ \\
    0.000 & 0.0000 & 0.00000 & 0.000 & 0.0000 & 0.00000 \\
    \hline
    \end{tabular}%
\end{table}

\begin{figure}[ht]
    \centering
    \begin{subfigure}[b]{0.48\textwidth}
        \centering
        Centerline Comparison $u$-velocity \\[1ex]
        \includegraphics[trim={50 0 0 0}, clip, width=\textwidth]{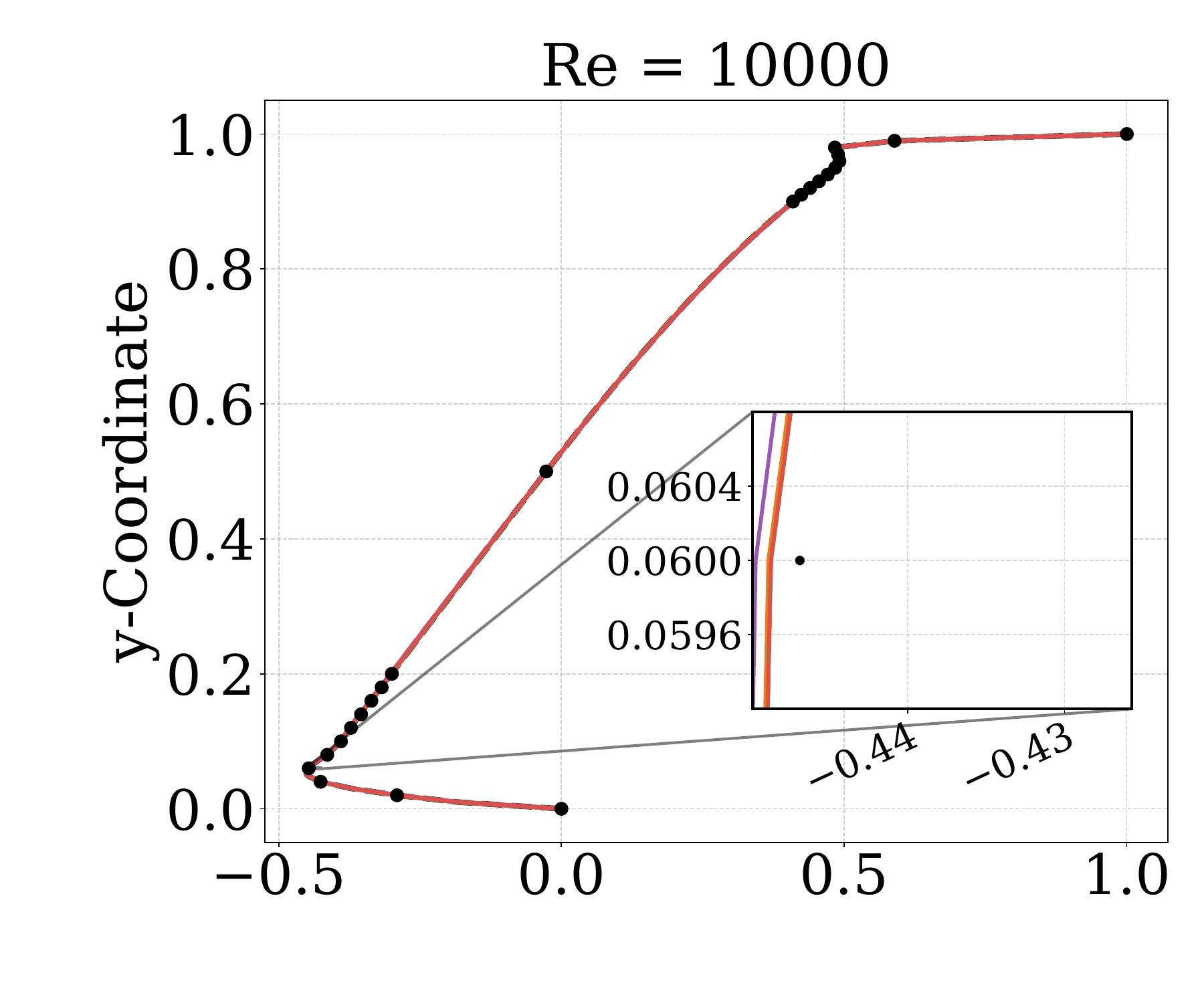}
    \end{subfigure}
    \hfill
    \begin{subfigure}[b]{0.4485\textwidth}
        \centering
        Centerline Comparison $v$-velocity \\[1ex]
        \includegraphics[trim={100 0 0 0}, clip, width=\textwidth]{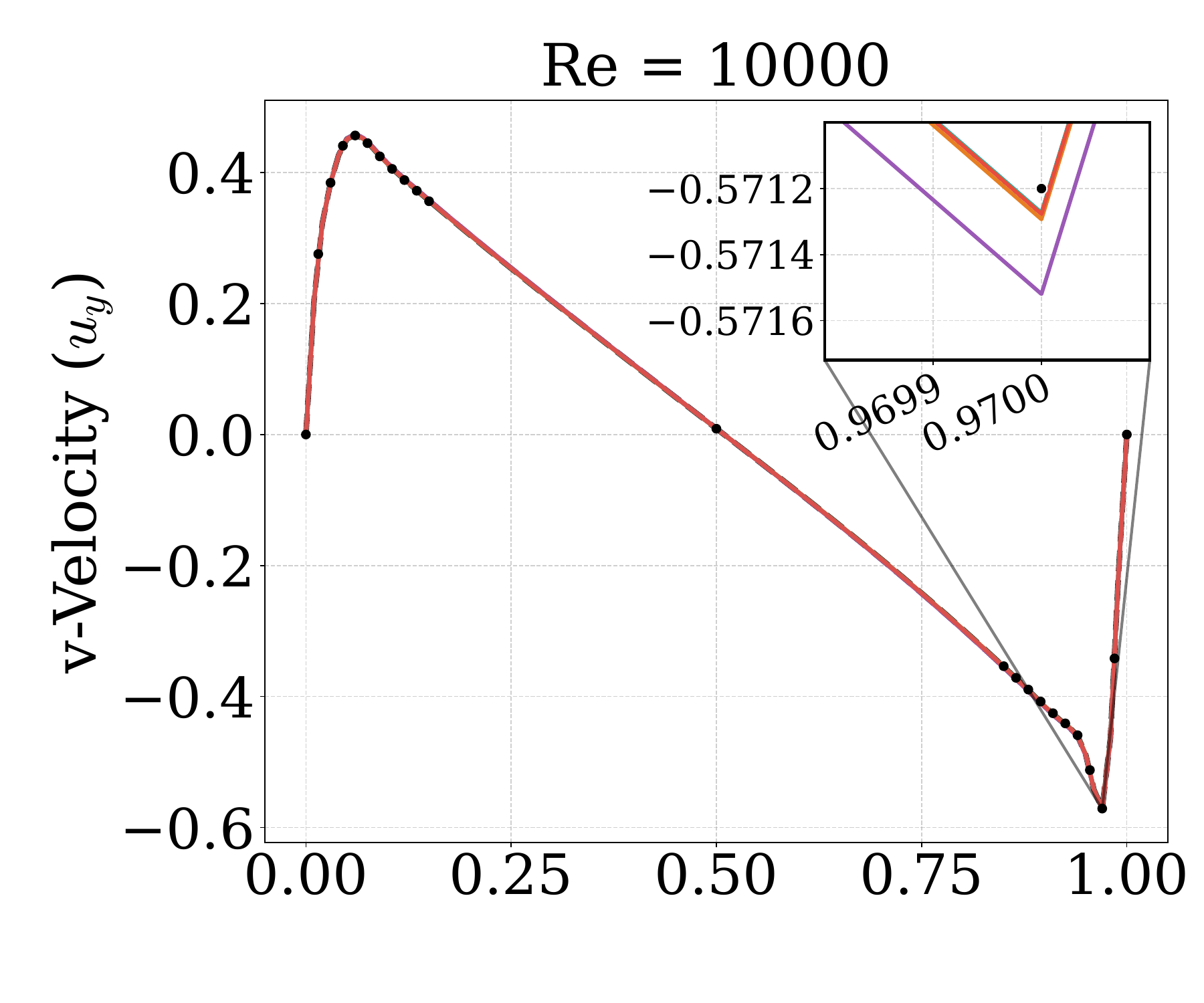}
    \end{subfigure}
    
    \vspace{-1.5em}

    \begin{subfigure}[b]{0.48\textwidth}
        \centering
        \includegraphics[trim={50 0 0 0}, clip, width=\textwidth]{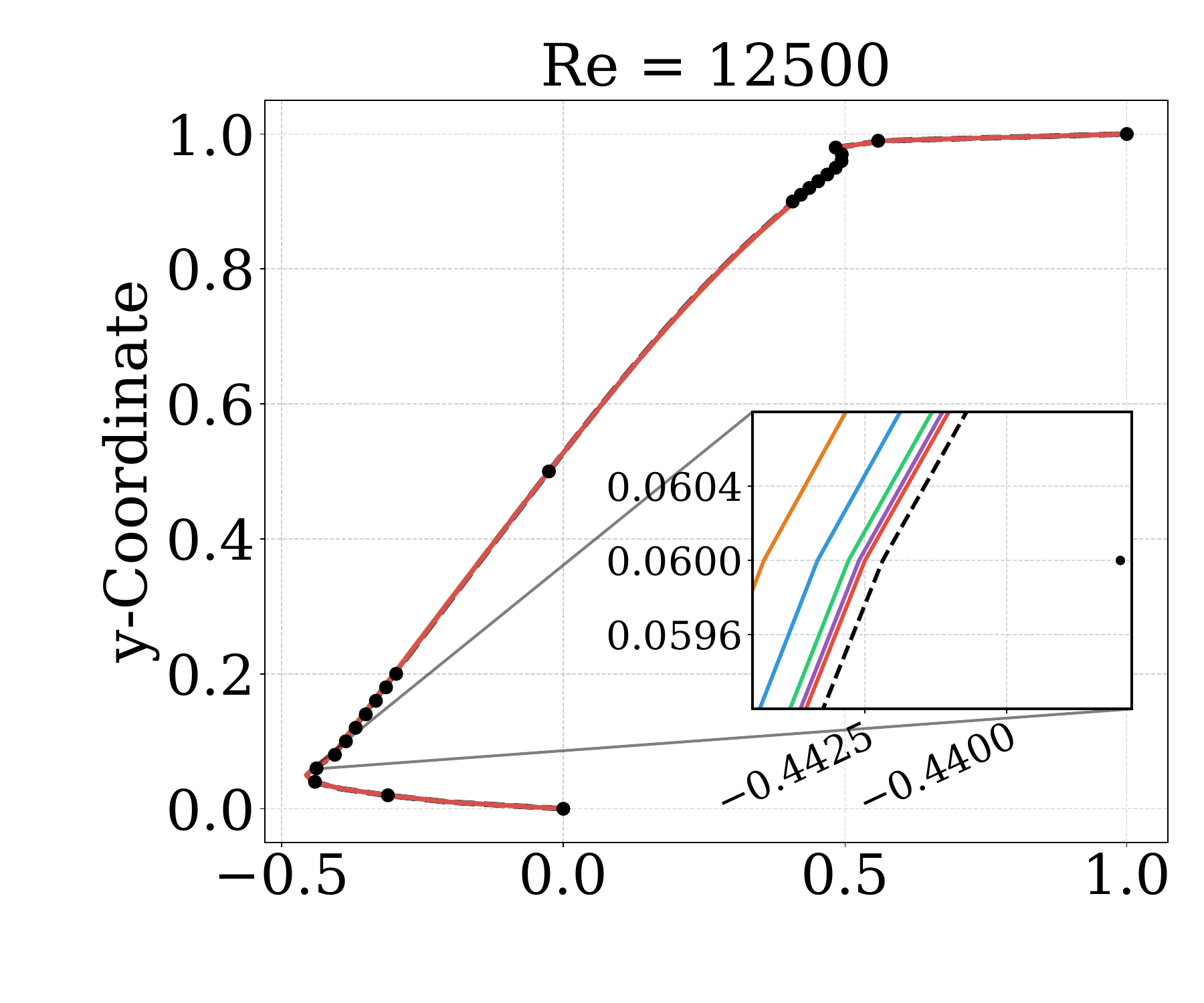}
    \end{subfigure}
    \hfill
    \begin{subfigure}[b]{0.4485\textwidth}
        \centering
        \includegraphics[trim={100 0 0 0}, clip, width=\textwidth]{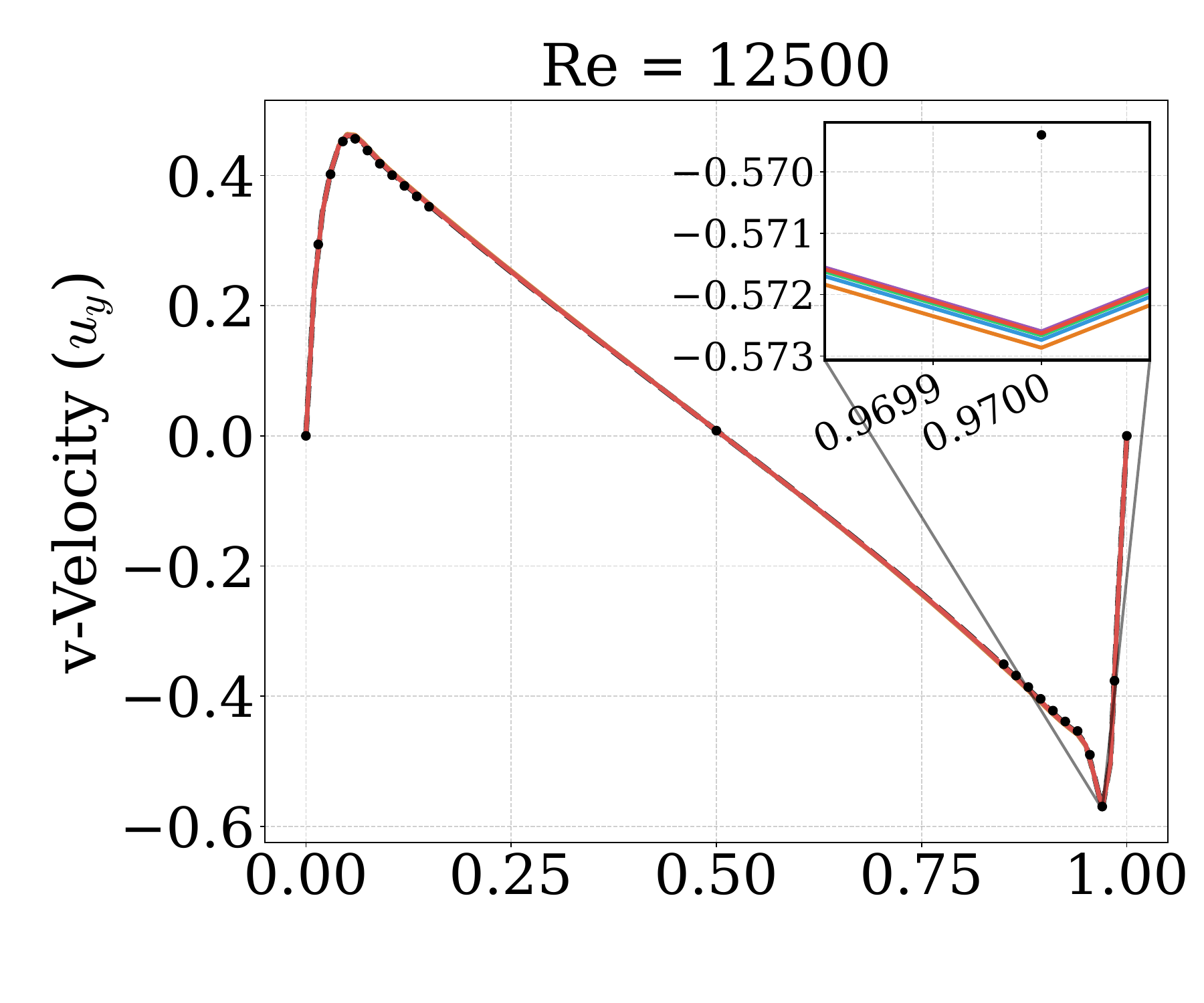}
    \end{subfigure}
    
    \vspace{-1.5em}
    
    \begin{subfigure}[b]{0.48\textwidth}
        \centering
        \includegraphics[trim={50 0 0 0}, clip, width=\textwidth]{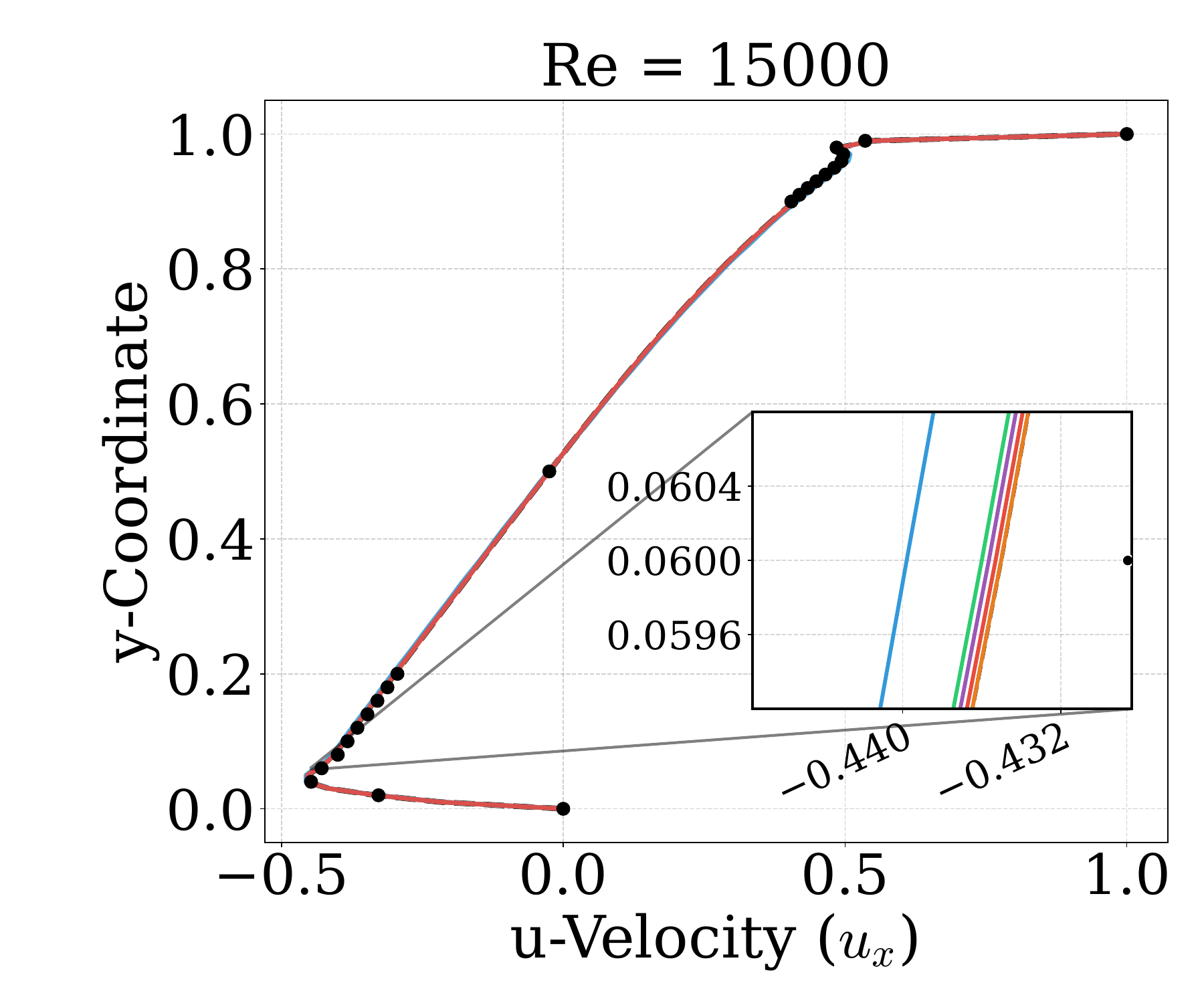}
    \end{subfigure}
    \hfill
    \begin{subfigure}[b]{0.4485\textwidth}
        \centering
        \includegraphics[trim={100 0 0 0}, clip, width=\textwidth]{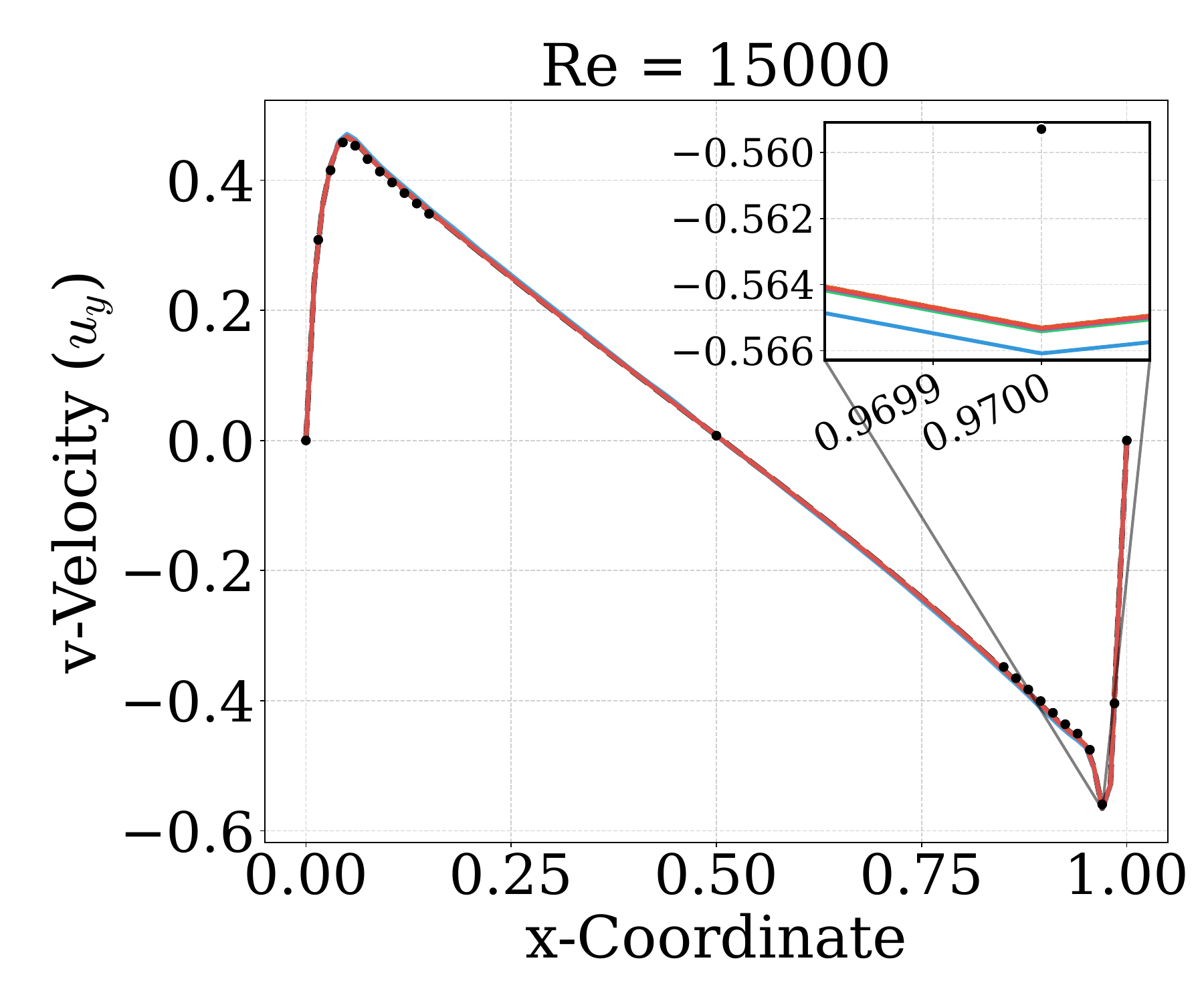}
    \end{subfigure}

    \begin{subfigure}[b]{\textwidth}
        \centering
        \includegraphics[width=\textwidth]{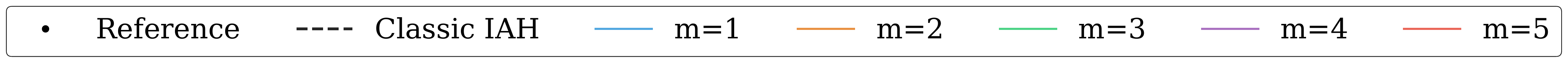}
    \end{subfigure}

    \caption{Comparison of the centerline velocity profiles for different memory depths ($m$).}
    \label{fig:profil}
\end{figure}

\subsection{Channel Flow Over a Full Step}

In this section, the two-dimensional channel flow over a full step problem~\cite{IAH_for_NSE,GEREDELI2023114920} is considered to evaluate the performance of the proposed AA-IAH algorithm. The computational domain consists of a $30 \times 10$ rectangular channel with a $1 \times 1$ obstacle (step) located on the bottom wall, 5 units away from the inlet. The computational mesh is generated with a global refinement level of $N = 4$, which corresponds to a $480 \times 160$ resolution, yielding a total of 76,544 quadrilateral cells and 692,179 degrees of freedom (DoFs), comprising 614,978 DoFs for the velocity field and 77,201 DoFs for the pressure.

To ensure consistency with reference studies in the literature, the kinematic viscosity is set to $\nu = 0.01$ (thus $Re = 100$), and the pressure equation parameter is fixed at $\alpha = 1/\nu = 100$. To investigate its effect on the convergence rate and stability of the algorithm, the parameter $\rho$ is varied as 50, 100, and 200. For each case, the performances of the AA-IAH algorithms with different memory depths ($m$) are compared with the IAH algorithm. The iteration counts and CPU times for the three values of $\rho$ are shown in Figure~\ref{fig:channel_performance}. The corresponding relative iterate-change histories are presented in Figure~\ref{fig:channel_convergence_history}; to avoid visual complexity, only the case $m=4$ is presented for all values of $\rho$. As expected, Anderson acceleration significantly reduces both the number of iterations and the computational time compared to the IAH method.

Figure~\ref{fig:mesh_and_streamline} illustrates the streamlines over the steady-state velocity magnitude. The recirculation zone (eddy) formed immediately behind the step is clearly captured, and the physical behavior of the flow is in full agreement with the literature~\cite{IAH_for_NSE,GEREDELI2023114920}. 

\begin{figure}[ht]
    \begin{subfigure}[b]{\textwidth}
        \centering
        \includegraphics[width = 0.75\textwidth]{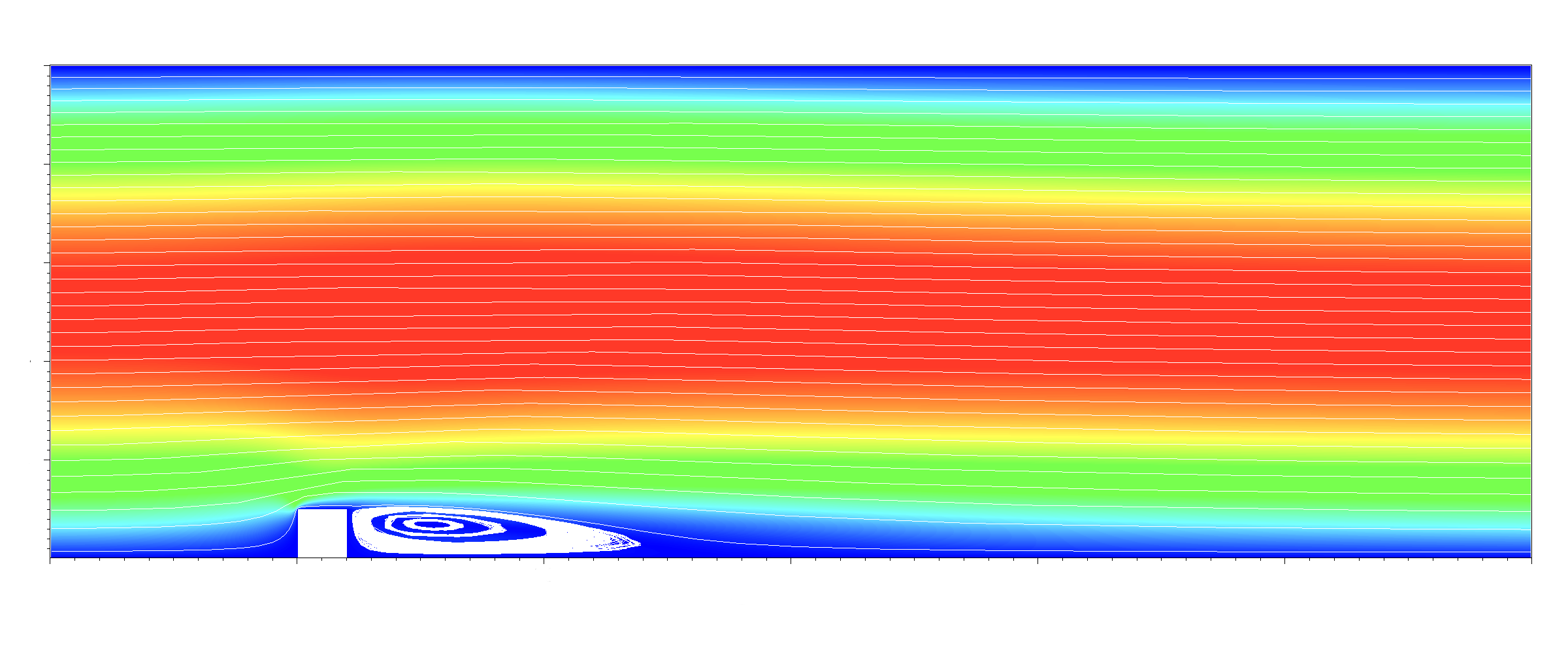}
        \label{fig:channel_streamline}
    \end{subfigure}

    \caption{Steady-state flow streamlines over the velocity magnitude for $\nu = 0.01$.}
    \label{fig:mesh_and_streamline}
\end{figure}

\begin{figure}[ht]
    \centering
    \begin{subfigure}[b]{0.32\textwidth}
        \centering
        \includegraphics[trim={120 25 20 10}, clip, width=\textwidth]{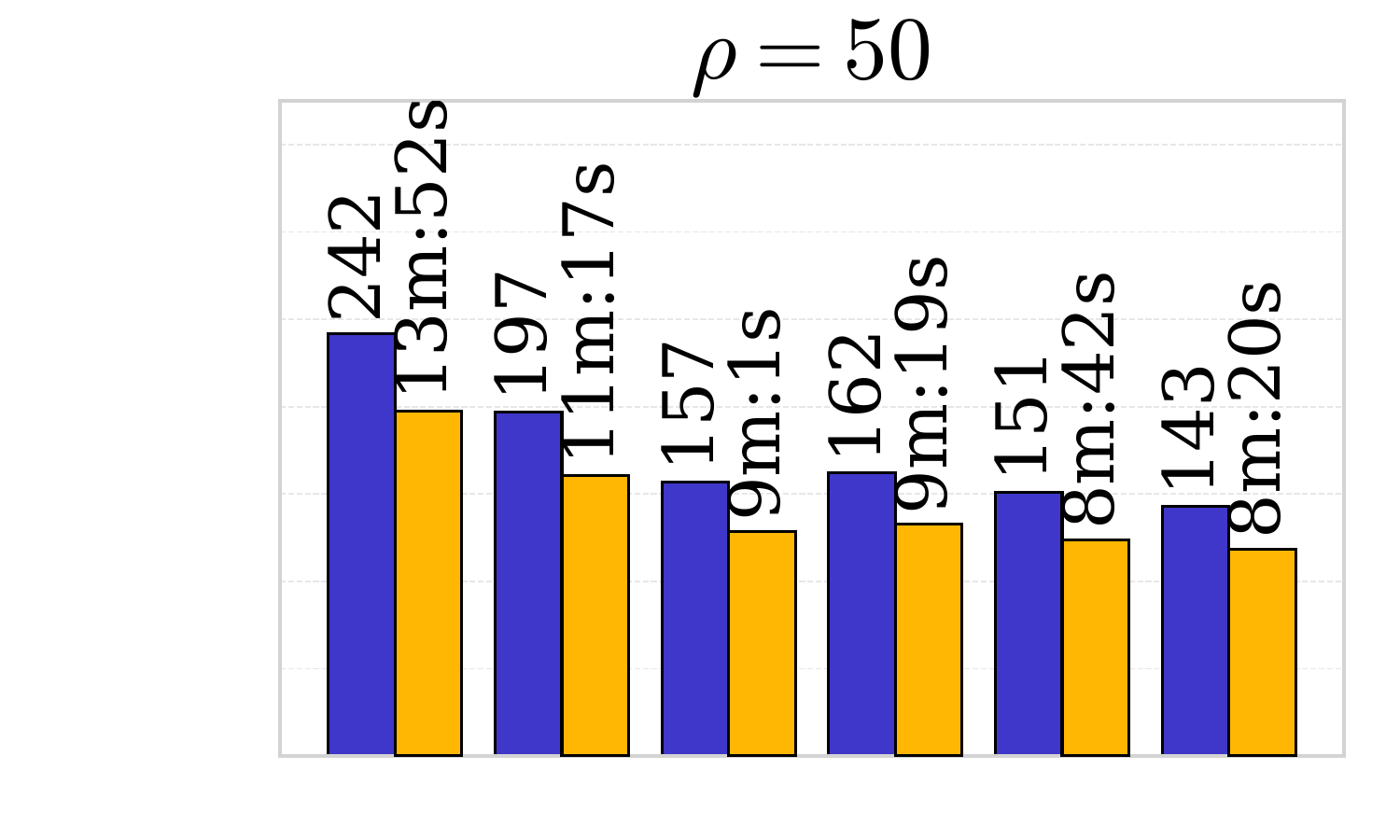}
        \caption{$\rho=50$}
    \end{subfigure}
    \begin{subfigure}[b]{0.32\textwidth}
        \centering
        \includegraphics[trim={120 25 20 10}, clip, width=\textwidth]{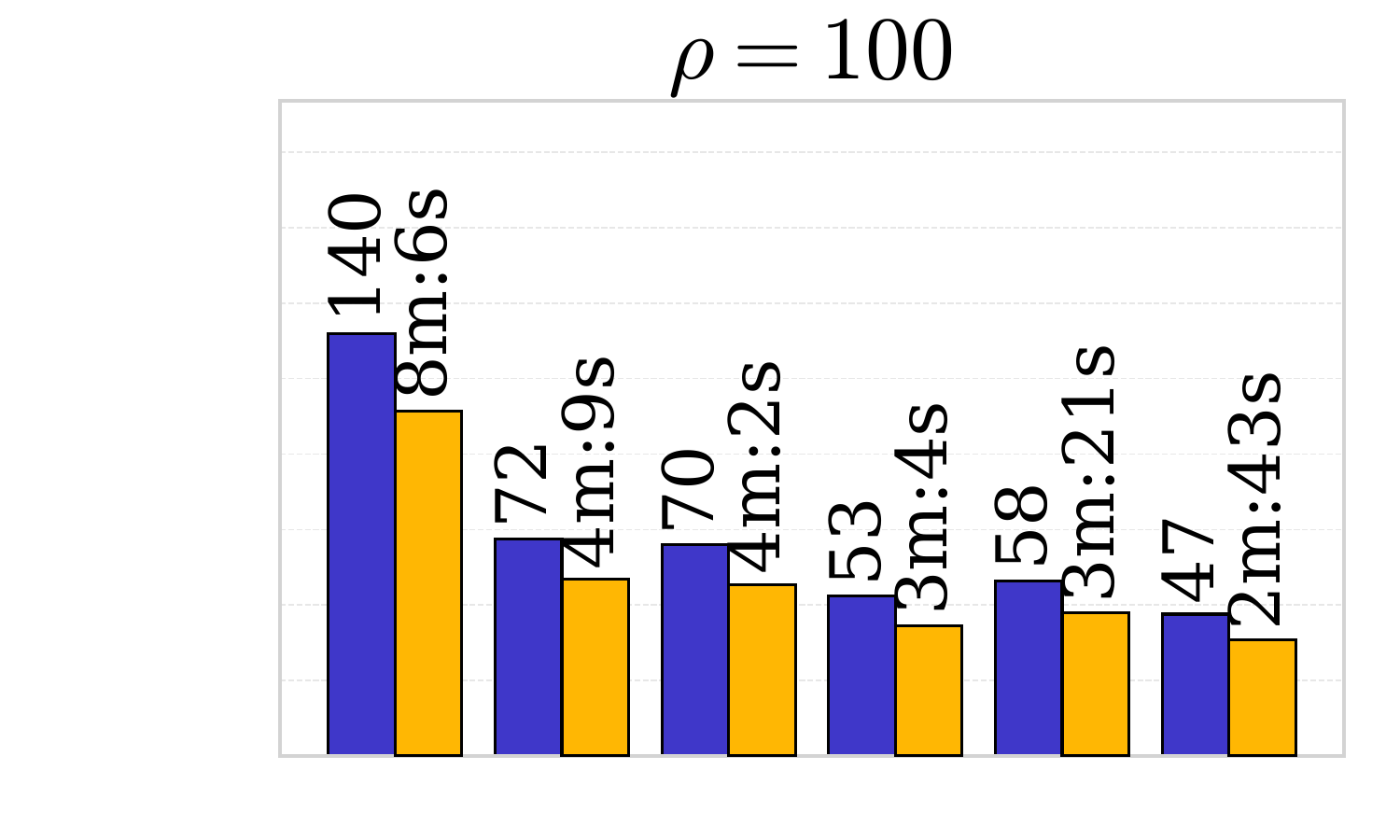}
        \caption{$\rho=100$}
    \end{subfigure}
    \begin{subfigure}[b]{0.32\textwidth}
        \centering
        \includegraphics[trim={120 25 20 10}, clip, width=\textwidth]{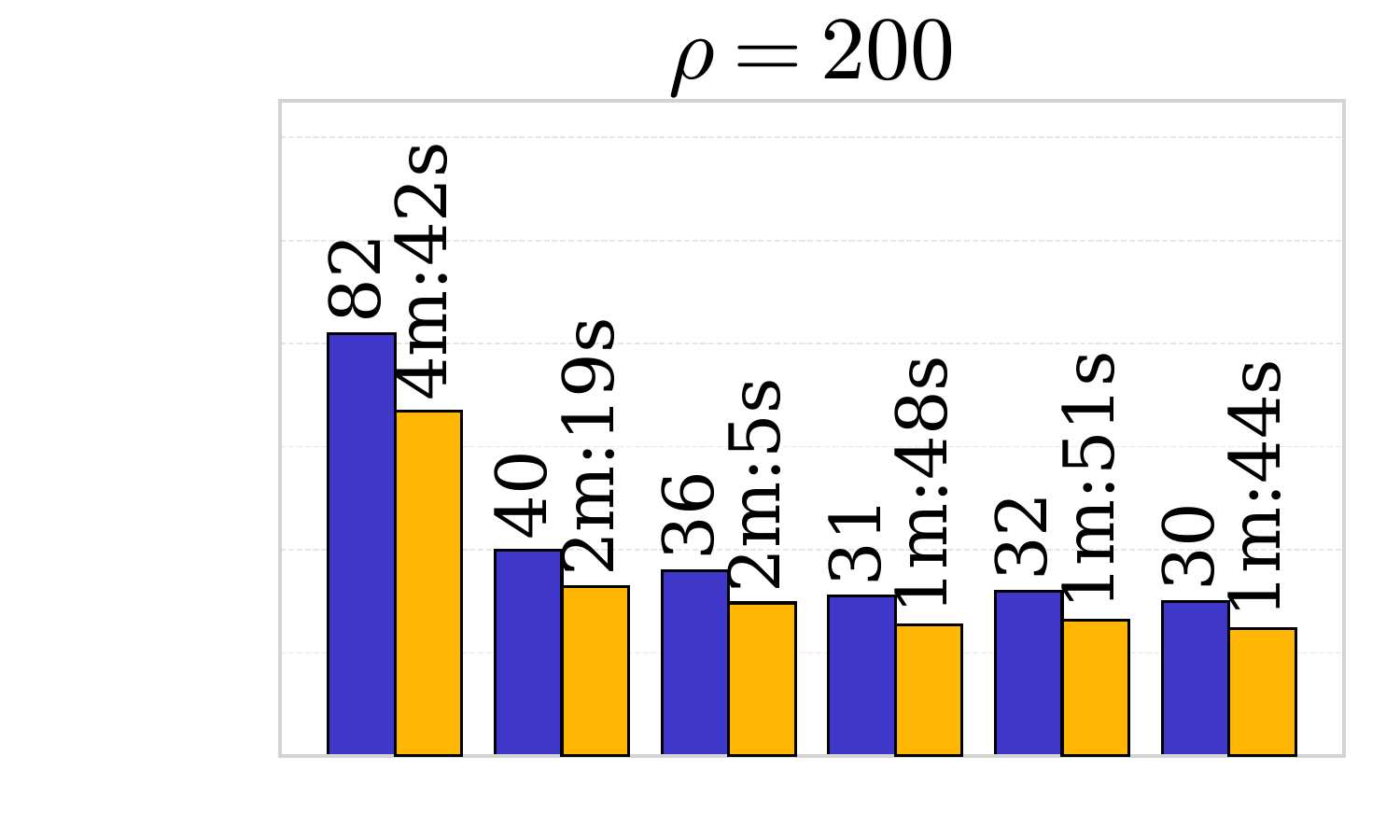}
        \caption{$\rho=200$}
    \end{subfigure}
    \caption{Performance comparison of IAH and AA-IAH($m$) algorithms in terms of total iteration counts and CPU times. Leftmost bars represent IAH, while the remaining bars correspond to AA-IAH with $m = 1, 2, 3, 4$ from left to right.}
    \label{fig:channel_performance}
\end{figure}

\begin{figure}[ht]
    \centering
    \begin{subfigure}[b]{0.3525\textwidth}
        \centering
        \includegraphics[trim={20 0 20 0}, clip, width=\textwidth]{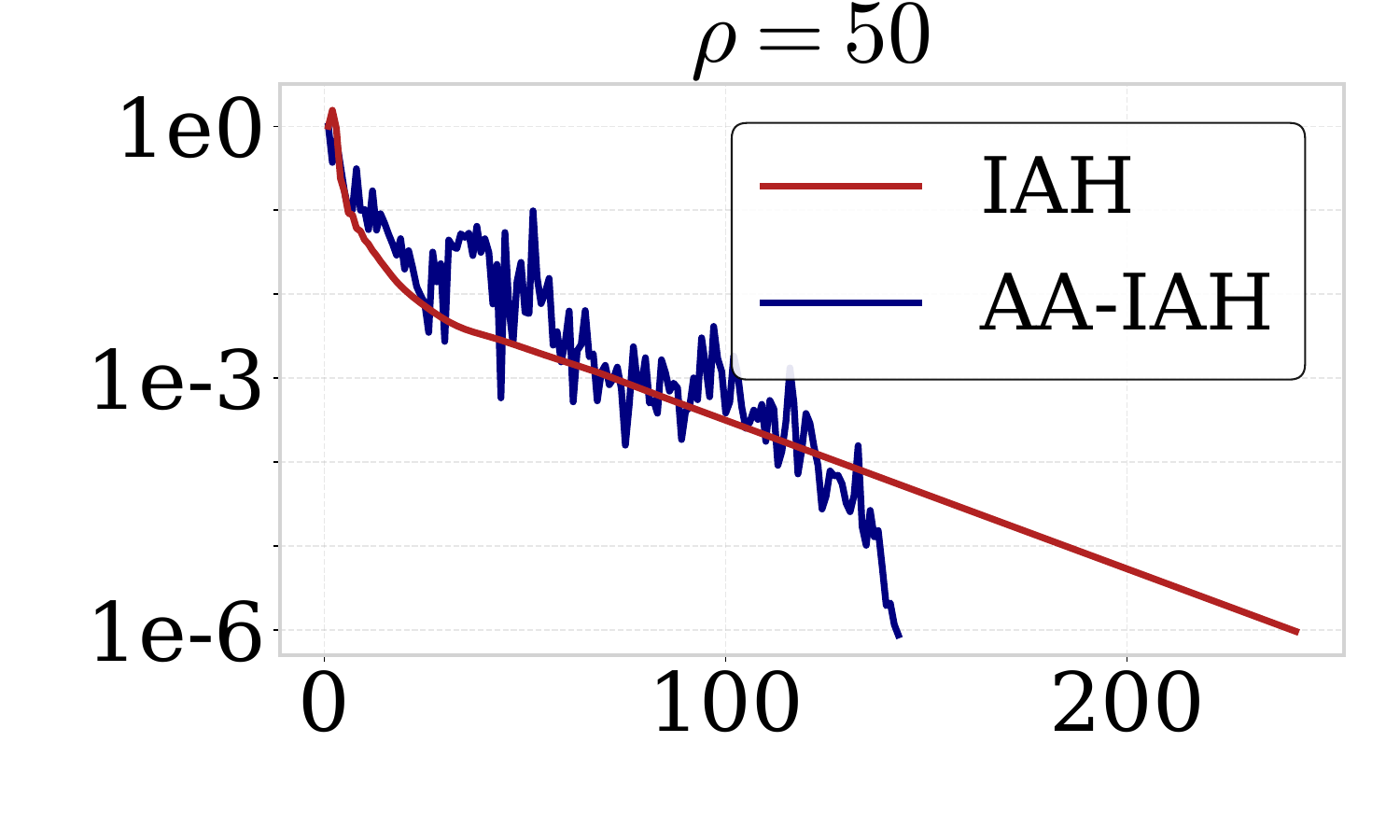}
        \caption{$\rho=50$}
    \end{subfigure}
    \begin{subfigure}[b]{0.30\textwidth}
        \centering
        \includegraphics[trim={120 0 20 0}, clip, width=\textwidth]{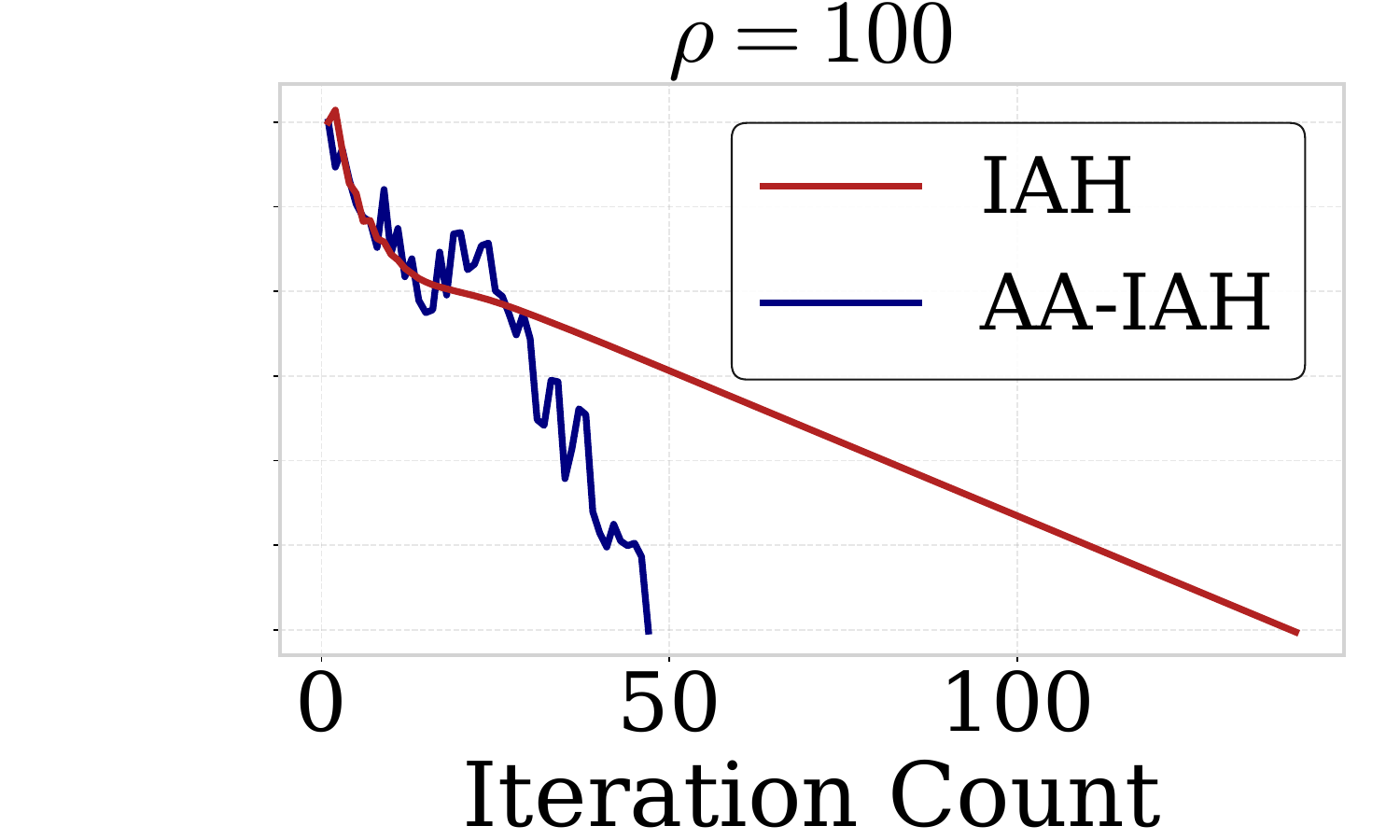}
        \caption{$\rho=100$}
    \end{subfigure}
    \begin{subfigure}[b]{0.30\textwidth}
        \centering
        \includegraphics[trim={120 0 20 0}, clip, width=\textwidth]{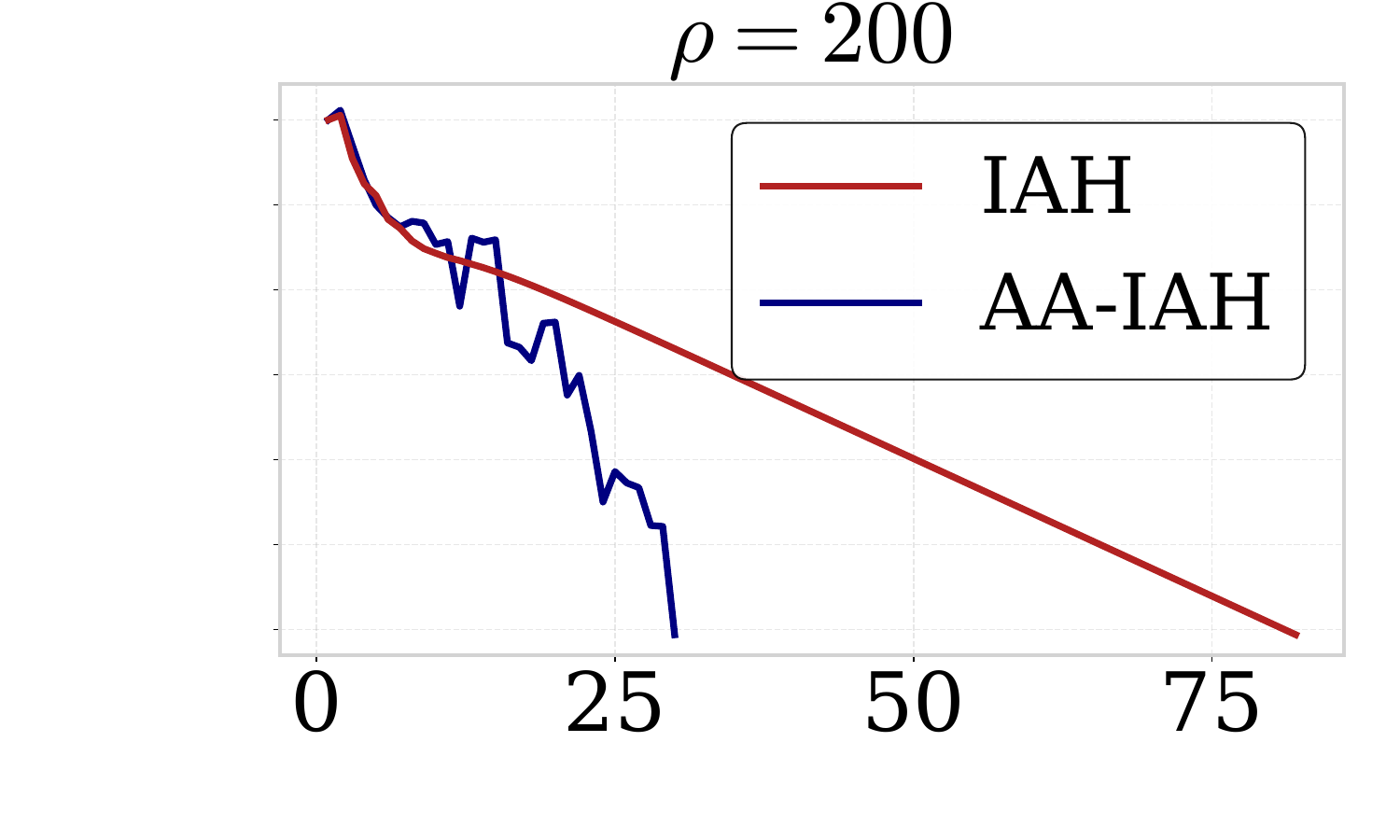}
        \caption{$\rho=200$}
    \end{subfigure}
    \caption{Relative iterate-change history of the IAH and AA-IAH($4$) algorithms.}
    \label{fig:channel_convergence_history}
\end{figure}

\section{Conclusion}
\label{sec:conclusion}

In this study, we presented the Anderson-accelerated improved Arrow--Hurwicz algorithm for solving the steady-state incompressible Navier--Stokes equations and comprehensively compared its performance with the IAH method. Based on the numerical experiments and theoretical analyses conducted, the main outcomes of our work can be summarized as follows:

\begin{itemize}
    \item \textbf{Theoretical Accuracy and Stability:} Using the $Q_2$--$Q_1$ finite element pair for spatial discretization, we verified through manufactured solution tests that the proposed method accurately achieves the expected optimal error convergence rates. Furthermore, by theoretically proving that the underlying operator of our algorithm is well-posed and Lipschitz continuously differentiable, we demonstrated that the method rests on a solid mathematical foundation.

    \item \textbf{High Computational Efficiency:} Compared to the IAH method, integrating Anderson acceleration into the system significantly reduced the number of iterations and thus the CPU time. Despite the additional least squares calculations introduced by the acceleration step, the significant gains in total solution time demonstrated the practical applicability of the method.

    \item \textbf{Robustness in Challenging Flows:} We tested our method at high Reynolds numbers (up to $Re = 15{,}000$) and in a geometry exhibiting flow separation, namely channel flow over a full step. In these tests, AA-IAH remained stable and retained the reported centerline-velocity and manufactured-solution accuracy.
\end{itemize}

In summary, the IAH algorithm enhanced with Anderson acceleration is an efficient and robust alternative for the tested steady incompressible-flow problems.

\section*{Data Availability and Acknowledgements}

The source code and scripts used to generate the numerical results, tables, and figures are available at \url{https://github.com/maggul-research/NSE-IAH-AA}. The numerical data are generated by the included experiment drivers; no external experimental dataset is used.

This article is derived from the master's thesis of the first author. The first author (Sinan Ergen) acknowledges the financial support from the Scientific and Technological Research Council of T\"urkiye (T\"UB\.ITAK) within the scope of the 2210-A National Scholarship Programme for MSc Students.
\bibliography{reference}
\bibliographystyle{plain}
\end{document}